\newtheorem{theorem}{Theorem}
\newtheorem{lemma}[theorem]{Lemma}
\newtheorem{proposition}[theorem]{Proposition}
\theoremstyle{definition}
\newtheorem{definition}[theorem]{Definition}
\newtheorem{remark}[theorem]{Remark}
\theoremstyle{plain}
\numberwithin{figure}{section} 
\theoremstyle{plain}
\theoremstyle{plain}
\theoremstyle{remark}
\newtheorem*{acknowledgement*}{Acknowledgement}
\theoremstyle{example}
\newcommand{\CN}{\mathbb{U}}
\newcommand{\cB}{{\mathcal B}}
\newcommand{\cE}{{\mathcal E}}
\newcommand{\cF}{{\mathcal F}}
\newcommand{\cG}{{\mathcal G}}
\newcommand{\cH}{{\mathcal H}}
\newcommand{\cK}{{\mathcal K}}
\newcommand{\cL}{{\mathcal L}}
\newcommand{\cN}{{\mathcal N}}
\newcommand{\cR}{{\mathcal R}}
\newcommand{\cX}{{\mathcal X}}
\newcommand{\Om}{{\Omega}}
\newcommand{\ve}{{\varepsilon}}
\newcommand{\del}{{\delta}}
\newcommand{\sig}{{\sigma}}
\newcommand{\al}{{\alpha}}
\newcommand{\be}{{\beta}}
\newcommand{\la}{{\lambda}}
\newcommand{\bbC}{{\mathbb C}}
\newcommand{\bbE}{{\mathbb E}}
\newcommand{\bbN}{{\mathbb N}}
\newcommand{\bbP}{{\mathbb P}}
\newcommand{\bbR}{{\mathbb R}}
\newcommand{\bbT}{{\mathbb T}}
\newcommand{\bbZ}{{\mathbb Z}}
\newcommand{\bbI}{{\mathbb I}}
\def\fg{\mathfrak{g}}
\newcommand{\DS}{\displaystyle}
\newcommand{\eps}{{\varepsilon}}
\begin{document}
\title[]{Edgeworth expansions for integer valued additive functionals of uniformly elliptic Markov chains}
 \vskip 0.1cm
\author{Dmitry Dolgopyat and Yeor Hafouta}
\vskip 0.1cm
\address{
The University of Maryland and the Ohio State University University}

\dedicatory{  }
\maketitle

\begin{abstract}
We obtain asymptotic expansions for   probabilities $\bbP(S_N=k)$ 
of partial sums of uniformly bounded integer-valued functionals 
$\DS S_N=\sum_{n=1}^N f_n(X_n)$ of uniformly elliptic inhomogeneous  Markov chains.  The expansions involve products of polynomials and trigonometric
polynomials, and they hold  without additional assumptions.
As an application of the explicit formulas of the trigonometric polynomials, we show that for every $r\geq1\,$, $S_N$ obeys the standard Edgeworth expansions of order $r$ in a conditionally stable way if and only if for every $m$,  and every $\ell$
the conditional distribution of $S_N$ given $X_{j_1},...,X_{j_\ell}$ mod $m$ is
$o_\ell(\sigma_N^{1-r})$ close to uniform, uniformly in the choice of $j_1,...,j_\ell$, where $\sig_N=\sqrt{\text{Var}(S_N)}.$
\end{abstract}

\section{Introduction}
Let $Y_1, Y_2,\dots$ be a sequence of integer-valued random variables.
Let $S_N=Y_1+\dots +Y_N$ and suppose that $V_N=V(S_N)=\text{Var}(S_N)\to\infty$.
Recall that the local central limit theorem (LLT) states that, uniformly in $k$ we have
\[
\bbP(S_N=k)=\frac1{\sqrt{2\pi}\sig_N}e^{-\left(k-\bbE(S_N)\right)^2/2V_N}+o(\sig_N^{-1}).
\]
where $\sig_N=\sqrt{V_N}$.   
For independent random variables, the stable local central limit theorem (SLLT) states that the LLT holds true for any integer-valued square integrable independent sequence 
$Y_1',Y_2',\dots $ which differs from $Y_1, Y_2,\dots $ by a finite number of elements.  We recall a classical result due to Prokhorov.\footnote{The  local limit theorem has origins in  the de Moivre-Laplace theorem, and Prokhorov's theorem can be viewed as a generalization.}
\begin{theorem}
\cite{Prok} 
\label{ThProkhorov}
If $Y_n$ are independent  and bounded
then 
the SLLT holds true iff
for each  integer $h>1$,
\begin{equation}\label{Prokhorov}
\sum_n \bbP(Y_n\neq m_n \text{ mod } h)=\infty 
\end{equation}
where $m_n=m_n(h)$ is the most likely residue of $X_n$ modulo $h$.
\end{theorem}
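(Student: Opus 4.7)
The approach is to treat the two implications separately via the characteristic function $\vf_N(t) = \prod_n \bbE[e^{itY_n}]$, exploiting in both directions the stability of the statements under finite modifications of $(Y_n)$.

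For the necessity of \eqref{Prokhorov}, I would argue by contrapositive. Suppose $\sum_n \bbP(Y_n\ne m_n \bmod h) < \infty$ for some $h>1$. Borel--Cantelli then yields $Y_n \equiv m_n \pmod{h}$ almost surely for all but finitely many $n$, so that the residues $Z_N := (S_N - \sum_{n\le N} m_n) \bmod h$ stabilize almost surely and converge to a random limit $Z$ depending only on finitely many of the $Y_n$. The next step is to evaluate
\[
\bbE\bigl[e^{2\pi ij Z/h}\bigr] = \prod_n e^{-2\pi ij m_n/h}\,\bbE\bigl[e^{2\pi ij Y_n/h}\bigr]
\]
and observe that, after possibly a further modification of finitely many $Y_n$ to kill any vanishing early factor, the summability of $\bbP(Y_n\ne m_n \bmod h)$ makes the tail product converge to a number of strictly positive modulus for every $j\in\{1,\dots,h-1\}$. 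Hence $Z$ is non-uniform on $\bbZ/h\bbZ$, so the asymptotic distribution of $S_N \bmod h$ cannot match the uniform prediction of the LLT, and the SLLT fails.

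For the sufficiency I would use the invariance of both hypothesis and conclusion under finite modifications to reduce to proving LLT for the original $(Y_n)$. Fourier inversion gives
\[
\bbP(S_N=k) - \frac{1}{\sqrt{2\pi V_N}}e^{-(k-\bbE S_N)^2/(2V_N)} = \frac{1}{2\pi}\int_{-\pi}^{\pi} e^{-itk}\bigl(\vf_N(t) - e^{it\bbE S_N - V_N t^2/2}\bigr)\,dt + O\bigl(e^{-cV_N}\bigr),
\]
so the plan is to establish the $L^1$ bound $\int_{-\pi}^{\pi}\bigl|\vf_N(t) - e^{it\bbE S_N - V_N t^2/2}\bigr|\,dt = o(\sig_N^{-1})$. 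On $|t|\le\ve$ a standard Taylor-expansion argument from the CLT handles the integrand; on $\ve\le|t|\le\pi$ the Gaussian factor is exponentially small in $V_N$, so the real task reduces to proving $\int_{\ve}^{\pi}|\vf_N(t)|\,dt = o(\sig_N^{-1})$. Since $|Y_n|\le M$, the factor $|\vf_n(t)|$ can fail to be uniformly bounded away from $1$ only near the finitely many resonant rationals $t=2\pi a/b$, $2\le b\le 2M$, $\gcd(a,b)=1$; a quadratic expansion of $\log\vf_n(2\pi a/b+s)$ should give a Gaussian-bump bound
\[
\int_{|s|\le\ve}|\vf_N(2\pi a/b + s)|\,ds \le C\,|\vf_N(2\pi a/b)|/\sig_N,
\]
while outside all resonant neighborhoods $|\vf_N(t)|\le e^{-cV_N}$. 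The key elementary inequality
\[
1 - \bigl|\bbE\bigl[e^{2\pi ij Y_n/b}\bigr]\bigr|^2 \ge c_b\,\bbP(Y_n\ne m_n \bmod b),
\]
which follows from $|\bbE[e^{2\pi ij Y_n/b}]|^2 = \sum_{r,s} p_r p_s \cos(2\pi j(r-s)/b)$ and the fact that the mode carries at least $1/b$ of the mass, then yields $|\vf_N(2\pi a/b)|\le \exp\bigl(-\tfrac12 c_b A_N(b)\bigr)$ with $A_N(b) := \sum_{n\le N}\bbP(Y_n\ne m_n \bmod b)$, which tends to $0$ by \eqref{Prokhorov}. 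Summing the finitely many bump contributions delivers the required $o(\sig_N^{-1})$.

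The hardest step, in my view, will be the Gaussian-bump estimate near each resonant rational $2\pi a/b$. Expanding $\log|\vf_n(2\pi a/b+s)|^2$ to second order in $s$, one must show that the $n$-sum of the quadratic coefficient is comparable to $V_N$ rather than to some substantially smaller auxiliary quantity. My plan here is to interpret $\vf_n(2\pi a/b+s)/\vf_n(2\pi a/b)$, on the range of $n$ where the denominator is bounded away from $0$, as the characteristic function of a complex-tilted measure, and to bound its variance below by $c_b V_n$ using $|Y_n|\le M$; the indices where $|\vf_n(2\pi a/b)|$ is already tiny can be grouped into a separate exponentially small prefactor. Once this dichotomy is handled, the remaining arguments are standard Fourier bookkeeping.
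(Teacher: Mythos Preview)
The paper does not prove Theorem~\ref{ThProkhorov}; it is quoted as a classical result of Prokhorov and serves only as background, so there is no proof in the paper to compare against. Your overall architecture --- necessity via Borel--Cantelli and non-uniformity of the stabilized residue after a finite modification, sufficiency via Fourier inversion with separate treatment of a neighbourhood of $0$, of non-resonant $t$, and of the finitely many nonzero resonant rationals --- is the standard one and is sound. (The paper's own \S\ref{Sec4}, which proves the stronger Theorem~\ref{ThEdgeMN}, follows the same scheme for the sufficiency direction, reducing to the independent case and invoking the lemmas of \cite{DH}.)

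There is, however, a real gap in your bump estimate. You claim
$\int_{|s|\le\ve}|\vf_N(2\pi a/b+s)|\,ds \le C\,|\vf_N(2\pi a/b)|/\sig_N$,
but $|\vf_N(2\pi a/b+s)|$ is generally \emph{not} maximized at $s=0$: the second-order expansion of $\log|\vf_n(2\pi a/b+s)|$ carries a linear term of size $O(\bbP(Y_n\not\equiv m_n))$, so the peak drifts. For instance, if each $Y_n\in\{m_n,m_n+1\}$ with $\bbP(Y_n=m_n+1)=\eps_n$, then $|\vf_n(t)|$ is even in $t$, hence on $[-\ve,\ve]$ the product is maximized at the endpoint $s=-\ve$, and $|\vf_N(2\pi a/b-\ve)|/|\vf_N(2\pi a/b)|\asymp e^{c\ve A_N}\to\infty$. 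The bound also fails trivially whenever some $\vf_n(2\pi a/b)=0$. The remedy is to bound $|\vf_N(2\pi a/b+s)|$ absolutely rather than relative to its value at $s=0$: symmetrize via $D_n=Y_n-Y_n'$, write $1-|\vf_n(2\pi a/b+s)|^2=\bbE[1-\cos((2\pi a/b+s)D_n)]$, and split according to whether $D_n\equiv 0\bmod b$. The $\{D_n\equiv0\}$ part gives $\ge c\,s^2\bbE[D_n^2;\,D_n\equiv0]$, the complementary part gives $\ge c\,\bbP(Y_n\not\equiv m_n)$ uniformly in $|s|\le\ve$, and one obtains
$|\vf_N(2\pi a/b+s)|\le\exp(-c_1A_N-c_2 s^2(V_N-CA_N))$ with $A_N=\sum_n\bbP(Y_n\not\equiv m_n)$. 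If $A_N\ge cV_N$ the first term alone yields $e^{-c'V_N}$; otherwise integrating the Gaussian factor gives $O(e^{-c_1A_N}/\sig_N)=o(\sig_N^{-1})$, which is what you actually need.
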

We refer the readers to \cite{Rozanov, VMT} for extensions of this result to the case when $Y_n$'s are not necessarily bounded (for instance, the result holds true  when 
$\DS \sup_n\|Y_n\|_{L^3}<\infty$). 
Related results for local convergence to more general 
limit laws are discussed in \cite{D-MD, MS74}.

 The central limit theorem (CLT) for inhomogeneous Markov chains was obtained  for the first time in \cite{Dob}, and we refer to \cite{SV, Pel} for two modern approaches.
In the past decades local limit theorems were extended to stationary homogeneous Markov chains. The first result in this direction was obtained in \cite{Nag}, and we refer to \cite{HH} for a general approach. Despite the fact that results in the  homogeneous case where already known in the 50's (\cite{Nag}), only very recently \cite{DS} the general case of an inhomogeneous (uniformly elliptic) Markov chains was solved (see also \cite{MPP}). It turns out that the theory of the the local limit theorem in the inhomogeneous case is far richer than the  homogeneous case. In particular, one of the  main problems in the inhomogeneous
setting 
arises from  a possibility of non-linear growth of $V_N$.  
We refer to \cite{DS} for a detailed discussion about the obstructions for the LLT 
(in both lattice and non-lattice cases).

The local limit theorem deals with approximation of $P(S_N=k)$ up to an error term of order $o(\sig_N^{-1})$. In this paper, for uniformly bounded integer-valued 
additive functional $Y_n=f_n(X_n)$ of uniformly elliptic inhomogeneous Markov chains $\{X_n\}$ we will characterize  a more refined type of approximations of the probabilities $\bbP(S_N=k)$. Given $r\geq1$, 
the Edgeworth expansion of order $r$ holds true 
if there are polynomials $P_{b, N}$, whose coefficients are uniformly bounded in $N$ and their degrees do not depend on $N,$ so that 
 uniformly in $k\in\bbZ$ we have
that
\begin{equation}\label{EdgeDef}
\bbP(S_N=k)=\sum_{b=1}^r \frac{P_{b, N} (k_N)}{\sigma_N^b}\fg(k_N)+o(\sigma_N^{-r})
\end{equation}
where $k_N=\left(k-\bbE(S_N)\right)/\sig_N$ 
and  $\fg(u)=\frac{1}{\sqrt{2\pi}} e^{-u^2/2}. $

During the 20th century, the work of many authors led to the development of asymptotic expansions in both  the CLT and  the LLT, see \cite{IL, Hall2016} and references therein for more details.
Recently in \cite{DH}, for  bounded
independent random variables $Y_n$ we gave a complete characterization for expansions of an arbitrary order $r$ by means of the rate of decay of the characteristic function of $S_N-\bbE[S_N]$ and their first $r-1$ derivatives at nonzero ``resonant points" of the form  $t=\frac{2\pi l}{m}$ with $0<m\leq 2K$
and $0\leq l<m$, where $K=\sup_{n}\|Y_n\|_{L^\infty}$.
 The main probabilistic interpretation of these results was a characterization of ``super stable" Edgeworth expansions of an arbitrary order $r$ using only the decay rates of the distance of the distribution of 
$\DS S_N-\sum_{j=1}^\ell Y_{k_{j,N}}$ modulo $m$  from the uniform distribution, for all $m$, for an arbitrary choice of indexes $k_{1,N},...,k_{\ell,N}$. 
In this paper we will characterize a certain type of super stable Edgeworth expansions of an arbitrary order $r$ (see the precise definition below). That is, we will generalize \cite[Theorem 1.8]{DH} to  integer valued additive functionals of uniformly
elliptic Markov chains.

We say that the Edgeworth expansion of order $r$ holds true in a conditionally stable\footnote{Note that  for independent summands $X_n$, 
in the case when $X_n=Y_n$, the conditionally stable expansions are equivalent to the super stable expansions defined in \cite{DH}. However, in general these two types of expansions do not coincide, which is why we decided to introduce the notion of ``conditionally stable expansions".} way  if 
the usual Edgeworth expansion of order $r$ holds true under conditioning by finite elements $X_{j_1},...,X_{j_\ell}$, with error terms which depend only on $\ell$ and polynomials whose coefficients are bounded by some constants which depend only on $\ell$. 

We begin from a quantitative version of Prokhorov theorem which is a direct consequence
of the general asymptotic expansion which will be described in Section \ref{ScMain}.


\begin{theorem}
\label{ThEdgeMN}
Let $\ve_0>0$ be so that for every measurable set $A$,
$$
\ve_0\bbP(X_{n+1}\in A)\leq P(X_{n+1}\in A|X_{n}=x)\leq \ve_0^{-1}\bbP(X_{n+1}\in A)
$$
for each $n$ and a.e. $x$. Let $Y_n=f_n(X_n)$ with $\sup_n\|Y_n\|_{L^\infty}\leq K$.
For each $r\in\bbN$ there is a constant $R\!\!=\!\!R(r, K, \ve_0)$  such that 
the conditionally stable Edgeworth expansion of order $r$ holds~if for all $N$ we have
\[
M_N:=\min_{2\leq h\leq 2K}\sum_{n=1}^N\bbP(Y_n\neq m_n(h) \text{ mod } h)\geq R\ln V_N.
\]
In particular, $S_N$ obeys Edgeworth expansions of all orders if
$$ \lim_{N\to\infty} \frac{M_N}{\ln V_N}=\infty. $$
\end{theorem}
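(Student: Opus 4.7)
The plan is to deduce Theorem \ref{ThEdgeMN} from the general asymptotic expansion announced in Section \ref{ScMain}. That expansion should represent $\bbP(S_N=k)$ as the standard Edgeworth sum of order $r$ plus a finite family of ``resonant'' corrections indexed by the non-zero fractions $t_{l,h}=2\pi l/h$ with $2\le h\le 2K$ and $0<l<h$; each correction is controlled by $|\vf_N(t_{l,h})|$ and by the first $r-1$ derivatives $|\vf_N^{(j)}(t_{l,h})|$, where $\vf_N(t)=\bbE[e^{it(S_N-\bbE S_N)}]$. The same decomposition applies, with coefficients depending only on $\ell$, $K$ and $\ve_0$, to the conditional law of $S_N$ given any tuple $X_{j_1},\ldots,X_{j_\ell}$. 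Accordingly, the conditionally stable Edgeworth expansion of order $r$ follows once one shows that, uniformly in $\ell$, in the indices $j_i$, and in the conditioning values, the conditional characteristic function and its first $r-1$ derivatives at every $t_{l,h}$ are of order $o(\sig_N^{1-r})$.

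The central input is a per-step bound of the form
\[
\bigl|\bbE[e^{it_{l,h}Y_n}\mid X_{n-1}=x]\bigr| \;\le\; 1-\ka(K)\,\bbP\!\left(Y_n\neq m_n(h)\bmod h\,\big|\,X_{n-1}=x\right),
\]
valid for every $n$ and every resonant $t_{l,h}$. Since $Y_n$ is bounded and integer valued, $e^{it_{l,h}Y_n}$ depends only on $Y_n\bmod h$ and takes values in the fixed finite set of $h$th roots of unity; the conditional mass placed on non-modal residues therefore contributes a uniformly positive phase loss, which produces the bound with $\ka(K)$ depending only on $K$. Uniform ellipticity then gives $\bbP(Y_n\neq m_n(h)\bmod h\mid X_{n-1}=x)\ge \ve_0\,\bbP(Y_n\neq m_n(h)\bmod h)$ for almost every $x$, so that the per-step estimate reduces to its unconditional counterpart up to the factor $\ve_0$.

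Iterating via the tower property and the Doeblin-type contraction guaranteed by uniform ellipticity, I expect to obtain
\[
\bigl|\bbE[e^{it_{l,h}S_N}\mid X_{j_1},\ldots,X_{j_\ell}]\bigr|\;\le\; C_\ell\,\exp\!\Bigl(-\ka\ve_0\!\!\sum_{n\notin J}\!\bbP(Y_n\neq m_n(h)\bmod h)\Bigr),
\]
where $J=\{j_1,\ldots,j_\ell\}$ and $C_\ell=C_\ell(K,\ve_0,\ell)$. Since removing the $\ell$ conditioned steps from $M_N$ costs at most $\ell$, the exponent is at least $-\ka\ve_0 M_N+O(\ell)$. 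Under the hypothesis $M_N\ge R\ln V_N$ this gives $O_\ell\bigl(V_N^{-\ka\ve_0 R}\bigr)$, which is $o(\sig_N^{1-r})=o\bigl(V_N^{(1-r)/2}\bigr)$ as soon as $R=R(r,K,\ve_0)$ is chosen with $\ka\ve_0 R>(r-1)/2$. Bounds on the $t$-derivatives follow from the same computation: each differentiation brings down a factor bounded by $\sum_n |Y_n|\le KN$, which is absorbed into the exponential decay in $V_N$. The second assertion of the theorem is then immediate: if $M_N/\ln V_N\to\infty$, for every $r$ the threshold $R(r,K,\ve_0)$ is eventually exceeded, so the Edgeworth expansion of every order holds.

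The main obstacle is the iteration step. In the inhomogeneous Markov setting the multiplicativity of the characteristic function that is automatic for independent summands must be recovered from uniform ellipticity, through a careful analysis of the action of the twisted transfer operators $\bbL_n^{(t)} g(x)=\bbE[e^{itY_n}g(X_n)\mid X_{n-1}=x]$ on $L^\infty$. What uniform ellipticity buys is precisely that each $\bbL_n^{(t)}$ contracts the supremum norm by a factor $1-c(t)$ of the order of the unconditional per-step bound, so that the product of these operators decays geometrically along the chain. The conditioning on $X_{j_1},\ldots,X_{j_\ell}$ is handled by breaking the product at those indices and controlling the at most $\ell$ boundary factors by a constant $C_\ell$, which is the source of the $\ell$-dependence in the conditionally stable expansion.
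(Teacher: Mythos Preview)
Your iteration step contains a genuine gap. You claim that each twisted operator $\bbL_n^{(t)}g(x)=\bbE[e^{itY_n}g(X_n)\mid X_{n-1}=x]$ contracts the supremum norm by a factor of order $1-c\,\bbP(Y_n\neq m_n(h)\bmod h)$. This is false: take $g(y)=e^{-itf_n(y)}$, so that $\bbL_n^{(t)}g\equiv 1$ and $\|\bbL_n^{(t)}g\|_\infty=\|g\|_\infty=1$. The per-step bound you wrote is correct only when applied to $g\equiv 1$; for general $g$ the phase of $g(X_n)$ can exactly cancel the phase $e^{itY_n}$, and no contraction occurs. The tower property therefore gives you a single factor $1-cq_N$ but does not iterate, and uniform ellipticity (a Doeblin condition for the \emph{untwisted} chain) does not by itself produce a spectral gap for $\bbL_n^{(t)}$ at a nonzero resonant $t$.

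The paper avoids this obstacle by a different and much simpler device: condition on the odd-indexed variables $X_1,X_3,X_5,\dots$. Under this conditioning the even-indexed $X_{2n}$ become independent, and by Lemma~\ref{FiberQuantLemma} the quantities $q_{2n}(m)$ change only by a multiplicative constant depending on $\ve_0$. One is then literally in the independent setting and can invoke \cite[Lemma~3.4]{DH} to bound the whole resonant interval $I_j$ at once, yielding a contribution $o(\sig_N^{-r})$. Conditional stability follows because conditioning on finitely many additional $X_{j_1},\dots,X_{j_\ell}$ again perturbs each $q_n(m)$ only by a bounded factor (Lemma~\ref{FiberQuantLemma}(ii)).

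A secondary issue: your derivative bound ``each differentiation brings down a factor $\le KN$, absorbed by the exponential decay in $V_N$'' is unsafe, since $N$ may be arbitrarily large compared to $V_N$ (many $Y_n$ can be a.s.\ constant). The paper sidesteps this by bounding the integral over $I_j$ directly rather than by Taylor-expanding the characteristic function at $t_j$.
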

This theorem is a quantitative version of Prokhorov's Theorem \ref{ThProkhorov}.
We observe that logarithmic in $V_N$ growth of various non-periodicity characteristics
of individual summands are often used in the theory of local limit theorems
(see e.g. \cite{Mal78, MS70, MPP}).  
However, to  justify the optimality we need to understand the conditions necessary 
for the validity of the Edgeworth expansion. 

To this end we obtain an expansion for the probabilities $\bbP(S_N=k)$ which holds true without additional assumptions\footnote{Expansions in the CLT for additive functionals of uniformly elliptic Markov chains were considered in { \cite{FL} and \cite{DH BE}}.}.
In order to not to overload
the exposition we will formulate the general trigonometric expansion later 
(see Theorem \ref{MainThm}). Our generalized expansion is a key step 
in proving
a complete characterization of the conditionally stable expansions of an arbitrary order 
which extends \cite[Theorem 1.8]{DH} (which dealt with independent summands).

\begin{definition}
\label{DefResP}
Call $t$ \textit{resonant} if $t=\frac{2\pi l}{m}$ with $0<m\leq 2K$
and $0\leq l<m.$
\end{definition}
\begin{theorem}\label{Thm Stable Cond}

For arbitrary $r\geq 1$ the following conditions are equivalent:

(a) $\DS S_N=\sum_{j=1}^{N}f_n(X_n)$ obeys the conditionally stable Edgeworth expansions of order $r$.

(b) For each $\ell$,
$\DS
\sup_{1\leq j_1,...,j_\ell\leq N}\left\|\bbE[e^{it_jS_N}|X_{j_1},..,X_{j_\ell}]\right\|_{L^1}=o_\ell(\sig_N^{-(r-1)}).
$

(c) For each $1\leq j_1,...,j_\ell\leq N$, and each $h \leq 2K$ 
the conditional distribution of $S_N$ given $X_{j_1},...,X_{j_\ell}$ mod $h$ is
$o_\ell(\sigma_N^{1-r})$ close to uniform.
\end{theorem}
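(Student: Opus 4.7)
The plan is to prove the cycle (a) $\Rightarrow$ (b) $\Leftrightarrow$ (c) $\Rightarrow$ (a). The equivalence (b) $\Leftrightarrow$ (c) is elementary Fourier analysis on $\bbZ/h\bbZ$, while (a) $\Rightarrow$ (b) and (b) $\Rightarrow$ (a) both rely on the general trigonometric expansion of Theorem \ref{MainThm} applied conditionally on $X_{j_1},\dots,X_{j_\ell}$. This mirrors the strategy of \cite[Theorem 1.8]{DH} for independent summands, with Theorem \ref{MainThm} providing the analogue of the explicit Fourier computations available in the independent case. For (b) $\Leftrightarrow$ (c) I would use the discrete Fourier identity
\[
\bbP(S_N\equiv a\,(\mathrm{mod}\,h)\mid X_{j_1},\dots,X_{j_\ell})-\tfrac1h=\tfrac1h\sum_{l=1}^{h-1}e^{-2\pi ila/h}\bbE\!\left[e^{2\pi ilS_N/h}\bigm|X_{j_1},\dots,X_{j_\ell}\right]
\]
together with its inverse, since $t=2\pi l/h$ with $2\le h\le 2K$ and $1\le l<h$ are exactly the nonzero resonant points and all sums are finite; taking suprema in $a$ inside and $L^1(\bbP)$ outside gives both directions with constants depending only on $K$.

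For (a) $\Rightarrow$ (b) I would substitute the conditional Edgeworth expansion into $\bbE[e^{itS_N}\mid X_{j_1},\dots,X_{j_\ell}]=\sum_k e^{itk}\bbP(S_N=k\mid X_{j_1},\dots,X_{j_\ell})$. Each main term $\sigma_N^{-b}\sum_k e^{itk}P_{b,N}(k_N)\fg(k_N)$ is super-polynomially small at any nonzero resonant $t$ by Poisson summation, since the Fourier transform of a polynomial $\times$ Gaussian decays like $e^{-(t\sigma_N)^2/2}$. The uniform error $o_\ell(\sigma_N^{-r})$, after truncating $k$ to the effective range $|k-\bbE S_N|\le C\sigma_N\sqrt{\log\sigma_N}$ using the conditional local limit theorem, sums in $L^1(\bbP)$ to $o_\ell(\sigma_N^{1-r})$, uniformly in the choice of indices.

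The main content is (b) $\Rightarrow$ (a). Applying Theorem \ref{MainThm} conditionally yields an expansion
\[
\bbP(S_N=k\mid X_{j_1},\dots,X_{j_\ell})=\sum_{t\text{ resonant}}e^{itk}\sum_{b=1}^{r}\frac{Q_{b,t,N}(k_N)}{\sigma_N^b}\fg(k_N)+o_\ell(\sigma_N^{-r}),
\]
whose $t=0$ component is the classical Edgeworth expansion and whose coefficients $Q_{b,t,N}$ for nonzero resonant $t$ are, by Theorem \ref{MainThm}, explicit expressions in the conditional characteristic function $\bbE[e^{i(t+s)S_N}\mid\cdot]$ and a few of its $s$-derivatives at $s=0$. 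The aim is then to show that (b) forces $Q_{b,t,N}(k_N)/\sigma_N^b=o_\ell(\sigma_N^{-r})$ for nonzero $t$ uniformly on the effective range of $k_N$, so that only the $t=0$ term survives, yielding (a). The chief obstacle is extracting the needed bounds on the $s$-derivatives of the conditional characteristic function at the resonant points from the value-only hypothesis (b); I plan to handle this as in \cite{DH}, combining (b) with a Cauchy-type estimate on a small complex disk around each resonant point, the requisite a priori smoothness and polynomial growth being provided by the analyticity of the perturbed transfer operators of the uniformly elliptic Markov chain.
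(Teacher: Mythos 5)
Your overall plan mirrors the paper's: the equivalence (b) $\Leftrightarrow$ (c) is indeed the elementary discrete Fourier fact cited as \cite[Lemma 6.2]{DH}, and the paper also reduces (a) $\Leftrightarrow$ (b) to Theorem~\ref{EdgStable}, whose proof uses the general trigonometric expansion of Theorem~\ref{MainThm}. Your sketch of (b) $\Rightarrow$ (a) is in the right spirit: one conditions, applies Theorem~\ref{MainThm}, and shows the coefficients attached to nonzero resonant points are $o_\ell(\sigma_N^{-(r-1)})$. In the paper this is Proposition~\ref{Suff}, where those coefficients are rewritten (via \eqref{Form}) as sums of expectations $\bbE[\bbE(e^{it_jS_N}\mid\bar X_{k_1},\dots)G_{k_1,\dots,k_\ell,N}]$, and the needed control on the $G$'s comes from Lemmas~\ref{L1} and~\ref{Pi'' lemma}; your ``Cauchy estimate on a complex disk'' phrasing is not quite how the paper organizes it, but the ingredients match.

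The genuine gap is in your argument for (a) $\Rightarrow$ (b). You propose to substitute the conditional Edgeworth expansion directly into $\bbE[e^{itS_N}\mid X_{j_1},\dots,X_{j_\ell}]=\sum_k e^{itk}\bbP(S_N=k\mid\cdot)$ and estimate the error term by truncating to $|k-\bbE S_N|\le C\sigma_N\sqrt{\log\sigma_N}$. But the hypothesis in (a) only gives a uniform (in $k$) bound of the form $\epsilon_N\sigma_N^{-r}$ with $\epsilon_N\to 0$ at an unspecified rate. Summing that over the $\asymp\sigma_N\sqrt{\log\sigma_N}$ values of $k$ in your truncation window yields $\epsilon_N\sigma_N^{1-r}\sqrt{\log\sigma_N}$, which is \emph{not} $o(\sigma_N^{1-r})$ unless one knows $\epsilon_N\sqrt{\log\sigma_N}\to 0$; the tail beyond the window only decays like $e^{-cM^2}$, which cannot compensate. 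So the direct ``sum the pointwise error'' route loses a $\sqrt{\log\sigma_N}$ factor that cannot be removed from the hypotheses alone. The paper avoids this entirely: for necessity it uses the structure of Theorem~\ref{MainThm} and compares the resulting trigonometric expansion with the assumed classical Edgeworth expansion via the uniqueness/linear-independence statement \cite[Lemma 5.1]{DH}, which extracts smallness of each polynomial coefficient directly, without ever summing a pointwise $o(\sigma_N^{-r})$ error over $k$. You should replace your Poisson-summation step with this coefficient-matching argument (or an equivalent uniqueness lemma for trigonometric polynomial-times-Gaussian families).

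Two minor points: (i) $\ell$ in your bound for (a)$\Rightarrow$(b) should actually increase with the number of variables you fix, and in the paper the relevant bound is the $L^\infty$ version (Remark~\ref{Rem}) obtained by applying the necessity proposition to the conditioned chain, which is then dominated by the $L^1$ form in (b). (ii) Your identification of the nonzero resonant set as $t=2\pi l/h$, $2\le h\le 2K$, is correct and matches Definition~\ref{DefResP}.
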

\begin{remark}
It will follow from our proofs in the case $r=1$ that the conditionally stable local limit
theorem (namely the LLT after conditioning on a finite number number of elements) is equivalent to the conditionally stable Edgeworth expansion of order $1$. Thus we see that the conditionally stable local limit theorem holds true iff  for $\ell$,
$$
\sup_{1\leq j_1,...,j_\ell\leq N}\left\|\bbE[e^{it_jS_N}|X_{j_1},..,X_{j_\ell}]\right\|_{L^1}=o_\ell(1)
$$
that is iff (c) holds with $r=1$, 
namely, for each integers $h$ and $L$ if we are given a sequence 
$(j_{1,N}, \dots ,j_{\ell_N, N})$ of tuples with $\ell_N\leq L$ then the conditional distributions
of $(S_N|X_{j_1}, \dots, X_{j_{\ell_N}})$ mod $h$ converge to uniform as $N\to\infty$. Equivalently
for each $m\in \bbZ$
$$ \lim_{N\to\infty} \bbE[e^{imS_N/h}|X_{j_1},..,X_{j_\ell}]=0. $$

\end{remark}

\begin{remark}
 Note that if 
$$M_N(h)=\sum_{n=1}^N\bbP(Y_n\neq m_n(h))\leq R(r,K,\ve_0)\ln\sig_N$$
then for most $n,$  the distributions of $Y_n$ are sufficiently close to being concentrated on a single point modulo $h$.
Our arguments also show that if 
this closeness holds for {\em all} $n$,
  then the Edgeworth expansions of order $r$ is valid 
iff  $\bbE[e^{itS_N}]=o(\sig_N^{-(r-1)})$ for every nonzero resonant point $t.$
This  is a particular case of Theorem~\ref{Edge}  formulated in 
Section~\ref{ScMain},
which shows that the condition 
$\bbE[e^{it S_N}]=o(\sig_N^{-(r-1)})$ 
is always  necessary for the usual expansions to hold, and that a certain weaker version of condition (b) is sufficient.

 We also note that if the distribution of $Y_n$ mod $m$ is not approximately
concentrated  on a single point the condition $\bbE[e^{it S_N}]=o(\sig_N^{-(r-1)})$ does 
not imply that the Edgeworth expansion holds even in the independent case, see
\cite[Example 10.2]{DH}. In fact, in the independent case if one of the $Y_n$'s is uniformly distributed modulo $m$ then $\bbE[e^{it S_N}]=0$
for all $N$ large enough and for every nonzero resonant point of the form $t=\frac{2\pi l}{m}$. However, this does not imply that the derivatives of the characteristic function of $S_N-\bbE[S_N]$ vanish at $t$, and hence by \cite[Theorem 1.5]{DH} expansions of an arbitrary order $r$ might not hold.
\end{remark}

\section{Main results}
\label{ScMain}
Let $\{X_j\}$ be a Markov chain  and assume that each $X_j$ takes values on some countably generated measurable space. Denote by $\mu_n$ the law of $X_n$. We  assume that there is a constant $C>1$ so that for $\mu_n$-a.e. $x$ and all measure subsets $A$ on the state space of $X_{n+1}$ we have
\begin{equation}\label{Doeblin}
C^{-1}\bbP(X_{n+1}\in A)\leq P(X_{n+1}\in A|X_{n}=x)\leq C\bbP(X_{n+1}\in A).
\end{equation}
The latter condition is equivalent to the following representation of the transition probabilities:
$$
\bbP(X_{n+1}\in A|X_{n}=x)=\int_{A}p_n(x,y)d\mu_{n+1}(y)
$$
where 
the transition densities $p_n(x,y)$ take values in the interval $[C^{-1},C]$.
Then  $\{X_n\}$ is exponentially fast $\psi$-mixing (see e.g. \cite{DS}), which
means that there are  constants $C_1>0$ and $\del\in(0,1)$ so that if $\bar X$ is a function of $X_1,...,X_m$ and $\bar Y$ is a function of $X_{m+n},X_{m+n+1},...$ for some $m$ and $n$ then for every relevant measurable sets $A,B$ 
\begin{equation}\label{psi}
\left|\bbP(\bar X\in A , \bar Y\in B)-P(\bar X\in A)P(\bar Y\in B)\right|\leq C_1 P(\bar X\in A)P(\bar Y\in B)\del^n.
\end{equation} 

Next, for each $n$, let $f_n$ be a of measurable integer valued-function on the state space of $X_n$ and set $Y_n=f_n(X_n)$. We assume  that $K:=\sup\|Y_n\|_{L^\infty}<\infty$.

Let $q_n(m)$ denote the second largest value among $P(Y_n\equiv j\text{ mod }m)$, \\
$j=0,1,\dots, m-1$.  Set
\[
M_N=\min_{2\leq m\leq 2K}\sum_{n=1}^N q_n(m).
\]

\begin{theorem}\label{MainThm}
Let $S_N=Y_1+Y_2+...+Y_N$ and $\sig_N=\sqrt{V(S_N)}$.
There is $J=J(K)<\infty$ and polynomials $P_{a, b, N}$ with degrees depending only on $a$ and $b$, whose coefficients are uniformly bounded in $N$ such that, for any $r\geq1$ uniformly in $k\in\bbZ$ we have
$$\bbP(S_N=k)-\sum_{a=0}^{J-1} \sum_{b=1}^r \frac{P_{a, b, N} ((k-a_N)/\sigma_N)}{\sigma_N^b}
\fg((k-a_N)/\sigma_N) e^{2\pi i a k/J} =o(\sigma_N^{-r})
$$
where $a_N=\bbE(S_N)$ and $\fg(u)=\frac{1}{\sqrt{2\pi}} e^{-u^2/2}. $

Moreover, 
given $K, r$, there exists $R=R(K,r)$ such that if 
$M_N\geq R \ln V_N$ then we can choose $P_{a, b, N}=0$ for $a\neq 0.$ 

In particular, $S_N$ obeys the Edgeworth expansion of all orders if 
$$
\lim_{N\to\infty}\frac{M_N}{\ln\sig_N}=\infty.
$$
\end{theorem}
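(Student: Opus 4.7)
The plan is to apply Fourier inversion and analyze the characteristic function $\phi_N(t):=\bbE[e^{itS_N}]$ on a partition of the torus adapted to the resonant points. Set $J:=\mathrm{lcm}\{1,2,\dots,2K\}$, so that every resonant point of Definition~\ref{DefResP} has the form $t_a=2\pi a/J$ for some $a\in\{0,1,\dots,J-1\}$. Using
\[
\bbP(S_N=k)=\frac{1}{2\pi}\int_{-\pi}^{\pi}\phi_N(t)e^{-itk}\,dt,
\]
I would split $[-\pi,\pi]$ into small neighborhoods $B_a=\{t:|t-t_a|<\del/\sig_N\}$ of each $t_a$ (with $\del$ small but fixed) and the complementary ``non-resonant'' region $\cR$.

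On $\cR$ the goal is a quasi-spectral decay $|\phi_N(t)|\leq C\rho^N$ for some $\rho<1$, uniformly in $t\in\cR$. Because of \eqref{Doeblin}, the twisted transition operators
\[
(L_{n,t}g)(x)=\int e^{itf_{n+1}(y)}p_n(x,y)g(y)\,d\mu_{n+1}(y)
\]
have a $t$-dependent simple dominant eigenvalue $\la_n(t)$ whose modulus is strictly less than $1$ at any fixed non-resonant $t$, and uniform ellipticity yields a uniform spectral gap on $\cR$ along the lines of \cite{DS, FL}. Hence $\int_\cR\phi_N(t)e^{-itk}\,dt=O(\rho^N)=o(\sig_N^{-r})$ for every $r$.

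On each $B_a$ I would rescale $t=t_a+s/\sig_N$ and develop a Nagaev--Guivarc'h-style perturbation series for $\phi_N(t_a+s/\sig_N)$ in powers of $1/\sig_N$. The target expansion is
\[
\phi_N(t_a+s/\sig_N)=e^{it_a a_N}e^{-s^2/2}\Bigl(A_{a,N}+\sum_{b\geq 1}\sig_N^{-b}Q_{a,b,N}(s)\Bigr)+o(\sig_N^{-r}),
\]
where $A_{a,N}$ and the polynomials $Q_{a,b,N}$ come from expanding the product of perturbed leading eigenvalues and dominant eigenprojections of the $L_{n,t}$'s in powers of $s$. Plugging this into Fourier inversion, integrating against $e^{-i(t_a+s/\sig_N)k}$ after changing variables, and using $\int e^{-s^2/2}e^{-isk_N}P(s)\,ds=\sqrt{2\pi}\tilde P(k_N)e^{-k_N^2/2}$ with $k_N=(k-a_N)/\sig_N$ produces a sum of the form $\sum_{a,b}\sig_N^{-b}P_{a,b,N}(k_N)\fg(k_N)e^{-it_a k}$; relabeling $a\mapsto -a\bmod J$ matches the sign in the statement.

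For the ``moreover'' clause, the polynomials $P_{a,b,N}$ with $a\neq 0$ are controlled by $|\phi_N(t_a)|$ and its first few derivatives, which in turn satisfy $|\phi_N(t_a)|\leq C\exp\bigl(-c\sum_{n=1}^N q_n(m)\bigr)$ with $m$ the denominator of $t_a$ in lowest terms. This bound is obtained by conditioning on a thinned subsequence and using \eqref{psi} to reduce to a near-independent product of factors $|\bbE[e^{it_aY_n}|\cF_{n-1}]|\leq 1-c\,q_n(m)$. Minimizing over $2\leq m\leq 2K$ yields $|\phi_N(t_a)|\leq Ce^{-cM_N}\leq C\sig_N^{-cR}$ whenever $M_N\geq R\ln V_N$; for $R=R(K,r)$ large, this forces every $P_{a,b,N}$ with $a\neq 0$ into the error. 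The main obstacle will be making the perturbative expansion of the product of \emph{inhomogeneous} twisted transfer operators rigorous and uniform in both $a$ and $N$: one must track an $s$-dependent cocycle of leading eigenvalues and eigenprojections and verify that the remainders are genuinely $o(\sig_N^{-r})$ uniformly in $k$ rather than merely $O(\sig_N^{-r})$.
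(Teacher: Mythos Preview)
Your overall Fourier-inversion framework matches the paper, but the treatment of the nonzero resonant neighborhoods has a genuine gap. You propose a ``Nagaev--Guivarc'h-style perturbation series for $\phi_N(t_a+s/\sig_N)$'' coming from perturbed leading eigenvalues of the twisted operators $L_{n,t}$, but the sequential Perron--Frobenius machinery (Theorem~\ref{RPF}) only produces such a cocycle of eigenvalues when the twist is \emph{small}: it applies to $R_z^{(j)}g(x)=\bbE[e^{iU_{j+1}+zY_{j+1}}g(X_{j+1})|X_j=x]$ under the hypothesis $\sup_n\|U_n\|_{L^1}+|z|<\del_0$. At a nonzero resonant point $t_a=2\pi l/m$ the natural choice $U_n=t_aY_n$ is not small, and the operators $L_{n,t_a}$ need not have any common dominant-eigenvalue structure in the inhomogeneous setting. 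The paper overcomes this by a conditioning trick you do not mention: one isolates the set $\cB$ of ``bad'' indices $n$ with $q_n(m)\geq\bar\eps$ (there are only $O(\ln V_N)$ of them since $M_N(m)\leq R\ln V_N$), conditions on $\cE=\sig\{X_n:n\in\cB\}$, and writes $e^{it_aY_n}=e^{it_a j(Y_n,m|\cE)}e^{it_aZ_n}$ with $Z_n$ defined in \eqref{Z n def}. After conditioning, every remaining $Z_n$ is small in $L^1$, the conditioned chain is still uniformly elliptic (Lemma~\ref{CondChain}), and \emph{now} Theorem~\ref{RPF} applies with $U_n=t_aZ_n$. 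The resulting expansion \eqref{ExForm} has random coefficients measurable with respect to $\cE$; taking expectation yields the polynomials $P_{a,b,N}$. Without this step there is no mechanism to linearize the phase at $t_a$ and your perturbation series has no starting point.

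Two smaller issues. First, your neighborhoods $B_a=\{|t-t_a|<\del/\sig_N\}$ are too narrow: after rescaling you only integrate over $|s|<\del$, which does not capture the full Gaussian, and the complementary region $\cR$ then contains points at distance $\del/\sig_N$ from $0$, where $|\phi_N|\approx e^{-\del^2/2}$ is bounded away from zero and no uniform $\rho^N$ bound is possible. The paper uses intervals of \emph{fixed} width $\del$. Second, for both the non-resonant intervals and the resonant ones with $M_N(m)\geq R\ln\sig_N$, the paper does not invoke a spectral gap for the inhomogeneous cocycle but instead conditions on the odd-indexed $X_{2n+1}$'s to reduce to independent summands and then quotes \cite[Lemmas 3.3--3.4]{DH}; this sidesteps the question of whether the references you cite actually deliver a uniform gap over $\cR$ in the inhomogeneous setting.
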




Next, we say that the Edgeworth expansions of order $r$ hold true in a conditionally stable way if they hold true under conditioning by finite elements $X_{j_1},...,X_{j_\ell}$, with error terms $o_\ell(\sig_N^{-(r-1)})$ which depend only on\footnote{ Recall that $\ell$ is the number of
indices we are allowed to fix.}
 $\ell,r$ and $\sig_N$ (and not on the indexes $j_1,...,j_\ell$).

\begin{theorem}\label{EdgStable}
$S_n$ obeys the Edgeworth expansions of order $r$ in a conditionally stable way if and only if for every nonzero resonant point $t_j$  and every $\ell$,
$$
\sup_{1\leq j_1,...,j_\ell\leq N}\left\|\bbE[e^{it_jS_N}|X_{j_1},...,X_{j_\ell}]\right\|_{L^1}=o_\ell(\sig_N^{-(r-1)}).
$$
\end{theorem}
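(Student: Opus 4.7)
The plan is to prove both implications by applying Theorem~\ref{MainThm} to the conditional chain given $X_{j_1},\ldots,X_{j_\ell}$, and then relating the polynomial coefficients in the conditional expansion to the conditional characteristic function of $S_N$ at resonant points. For sufficiency, hypothesis (b) is used to absorb the oscillatory $e^{2\pi iak/J}$ terms into the remainder, leaving the standard Edgeworth form. For necessity, the conditional Edgeworth expansion is Fourier-inverted, using that every nonzero resonant point $t_j=2\pi l/m$ with $1\leq m\leq 2K$ lies at distance at least $\pi/K$ from $2\pi\bbZ$, so that the Edgeworth main term contributes only exponentially little to the conditional characteristic function at $t_j$.

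For sufficiency I would first verify that conditioning on $X_{j_1},\ldots,X_{j_\ell}$ preserves the uniform ellipticity condition \eqref{Doeblin} with new constants depending on $C$ and $\ell$ but uniform in the conditioning values $(x_1,\ldots,x_\ell)$: the conditional chain splits into segments between the $j_i$'s on which bridge densities are ratios of products of the original $p_n$'s with a marginal, and these remain two-sided bounded. Theorem~\ref{MainThm} then applies to the conditional chain, producing
\[
\bbP(S_N=k\mid X_{j_1},\ldots,X_{j_\ell})=\sum_{a=0}^{J-1}\sum_{b=1}^{r}\frac{\tilde P_{a,b,N}(\tilde k_N)}{\tilde\sig_N^b}\fg(\tilde k_N)\,e^{2\pi iak/J}+o_\ell(\sig_N^{-(r-1)}),
\]
with conditional mean and variance comparable to $a_N$ and $\sig_N^2$ up to $O_\ell(1)$ corrections. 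The explicit formulas coming out of the proof of Theorem~\ref{MainThm} express $\tilde P_{a,b,N}$ for $a\neq 0$ as combinations of the conditional characteristic function $\bbE[e^{itS_N}\mid X_{j_1},\ldots,X_{j_\ell}]$ and its first $r-1$ derivatives at $t=2\pi a/J$. Hypothesis (b) bounds the value by $o_\ell(\sig_N^{-(r-1)})$ in $L^1$; derivative bounds follow from a Doeblin-based perturbation argument (or by differentiating the resolvent representation of the conditional characteristic function). Dividing by $\tilde\sig_N^b$ and taking $L^1$ norms in the conditioning therefore makes each $a\neq 0$ term contribute $o_\ell(\sig_N^{-r})$, so only the $a=0$ piece survives and produces the conditionally stable Edgeworth expansion.

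For the converse, substituting the conditionally stable Edgeworth expansion into
\[
\bbE[e^{it_jS_N}\mid X_{j_1},\ldots,X_{j_\ell}]=\sum_{k\in\bbZ} e^{it_jk}\,\bbP(S_N=k\mid X_{j_1},\ldots,X_{j_\ell})
\]
gives, via Poisson summation, a main contribution equal to the continuous Fourier transform of the conditional Edgeworth density. This transform is supported within $O(\tilde\sig_N^{-1})$ of $2\pi\bbZ$ and decays like $e^{-c\tilde\sig_N^{2}\mathrm{dist}(t,2\pi\bbZ)^{2}}$, hence is $O(e^{-c\sig_N^{2}})$ at every nonzero resonant point. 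Standard tail bounds for bounded increments of uniformly elliptic Markov chains concentrate $\bbP(S_N=k\mid X_{j_1},\ldots,X_{j_\ell})$ on $|k-a_N|\leq O(\sig_N\sqrt{\log\sig_N})$, so the $o_\ell(\sig_N^{-(r-1)})$ error summed against $e^{it_jk}$ contributes at most $o_\ell(\sig_N^{-(r-1)})$ in $L^1$, uniformly in the conditioning; this gives (b).

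The main obstacle I expect is the uniform-ellipticity step in the sufficiency direction: showing that the Doeblin constant for the conditional chain, and hence the error term in Theorem~\ref{MainThm} applied conditionally, can be made uniform in $(x_1,\ldots,x_\ell)$ with a controlled dependence on $\ell$, so that hypothesis (b) applies cleanly. A secondary but less severe difficulty is extracting from the proof of Theorem~\ref{MainThm} sufficiently explicit formulas for $\tilde P_{a,b,N}$ to identify their coefficients with derivatives of the conditional characteristic function at resonant points; once this identification is made, the argument becomes a conditional analogue of the characterization in \cite[Theorem 1.8]{DH}.
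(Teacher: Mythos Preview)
Your overall strategy---apply the generalized expansion of Theorem~\ref{MainThm} to the conditional chain and then analyse the $a\neq 0$ coefficients---is exactly the paper's route, but both directions contain a genuine gap at the step where you pass from the structure of the coefficients to the hypothesis.

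\textbf{Sufficiency.} The assertion that ``the explicit formulas express $\tilde P_{a,b,N}$ for $a\neq 0$ as combinations of the conditional characteristic function $\bbE[e^{itS_N}\mid X_{j_1},\ldots,X_{j_\ell}]$ and its first $r-1$ derivatives'' is not what the proof of Theorem~\ref{MainThm} gives. Formula~\eqref{ExForm} shows the resonant contribution is an expectation of $\bbE(e^{it_jS_N}\mid\cE)$ times $\cE$-measurable weights $A_{t_j,s,N,\cE}$, where $\cE$ is generated by the \emph{bad set} $\cB=\{n_1,\ldots,n_{N_0}\}$ (with $N_0=O(\ln\sig_N)$). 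So after conditioning on $X_{j_1},\ldots,X_{j_\ell}$ the coefficients are integrals of the \emph{further}-conditioned quantity $\bbE[e^{it_jS_N}\mid X_{j_1},\ldots,X_{j_\ell},X_{n_{k_1}},\ldots,X_{n_{k_s}}]$ against certain weight functions $G_{k_1,\ldots,k_{\ell_r}}$. This is why hypothesis~(b) is needed for \emph{all} $\ell$, not just the one you started with. The paper's Proposition~\ref{Suff} carries out exactly this reduction (formula~\eqref{Form}) and the hard technical point is the total-weight bound $\sum\|G_{k_1,\ldots,k_{\ell_r}}\|_{L^\infty}\leq C$ of~\eqref{G}; your phrase ``derivative bounds follow from a Doeblin-based perturbation argument'' does not capture this and, as written, there is no mechanism to pass from (b) to smallness of the coefficients.

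\textbf{Necessity.} Summing the uniform Edgeworth error $o(\sig_N^{-r})$ over a window $|k-a_N|\leq C\sig_N\sqrt{\log\sig_N}$ gives $o(\sig_N^{-(r-1)}\sqrt{\log\sig_N})$, not $o(\sig_N^{-(r-1)})$; shrinking the window forces the tail terms (of total mass $\asymp e^{-C^2/2}$ times a polynomial factor) to blow up for the same reason. The direct Fourier-inversion argument therefore loses a logarithmic factor that cannot be removed without additional input. The paper avoids this entirely: it compares the generalized expansion~\eqref{ExForm} with the assumed Edgeworth expansion and invokes a uniqueness lemma for products of polynomials and trigonometric polynomials (\cite[Lemma~5.1]{DH}) to force each resonant coefficient $\bbE[\bbE(e^{it_jS_N}\mid\cE)A_{t_j,s,N,\cE}]$ to be $o(\sig_N^{-(r-1)})$; a short downward induction on $s$ then isolates $\bbE[e^{it_jS_N}]$ (see the Proposition preceding Remark~\ref{Rem}).
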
 
In the course of the proof of Theorem \ref{EdgStable} we obtain the following result.

\begin{theorem}\label{Edge}
(i) The condition $\bbE[e^{it_j S_N}]=o(\sig_N^{-(r-1)})$ is necessary for the usual Edgeworth expansions of order $r$ to hold true.

(ii) There is a natural number $\ell_r$ which depends only on $r$ so that the condition 
$$
\max_{\ell\leq \ell_r}\sup_{ j_1,...,j_\ell\in\cB}\left\|\bbE[e^{it_jS_N}|X_{j_1},..,X_{j_\ell}]\right\|_{L^1}=o_\ell(\sig_N^{-(r-1)})
$$
is sufficient for the usual Edgeworth expansions of order $r$ to hold true. 
\end{theorem}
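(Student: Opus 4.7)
The plan is to handle the two implications separately, using Fourier analysis for (i) and the general trigonometric expansion of Theorem~\ref{MainThm} as the starting point for (ii). For part (i), I would substitute the assumed order-$r$ Edgeworth expansion into
\[
\bbE[e^{it_j S_N}] = \sum_{k \in \bbZ} \bbP(S_N=k)\, e^{it_j k}.
\]
The main Edgeworth terms, being Schwartz-like functions of $k_N = (k-a_N)/\sig_N$, contribute super-polynomially small amounts at $t_j$ via the Poisson summation formula: since $t_j = 2\pi l/m$ with $0 < l < m \leq 2K$ is bounded away from $2\pi\bbZ$, the dual frequencies avoid every term of the Poisson dual sum. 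The uniform error $o(\sig_N^{-r})$, summed against $e^{it_j k}$ over the effective range $|k-a_N| \lesssim \sig_N\sqrt{\log \sig_N}$ (controlled by sub-Gaussian tail bounds for bounded additive functionals of uniformly elliptic chains), contributes $o(\sig_N^{-(r-1)})$ after absorbing the polylogarithmic factor, giving the required estimate.

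For part (ii), Theorem~\ref{MainThm} already provides
\[
\bbP(S_N=k) = \sum_{a=0}^{J-1}\sum_{b=1}^{r} \frac{P_{a,b,N}(k_N)}{\sig_N^b}\, \fg(k_N)\, e^{2\pi i a k/J} + o(\sig_N^{-r}),
\]
and the usual order-$r$ Edgeworth expansion corresponds to retaining only the $a=0$ block. Thus (ii) reduces to showing $|P_{a,b,N}| = o(\sig_N^{b-r})$ for every $a \neq 0$ and every $b \leq r$. Unwinding the Fourier inversion near the resonant point $t_j = 2\pi a/J$ in the proof of Theorem~\ref{MainThm}, each $P_{a,b,N}$ is a universal polynomial in the normalised derivatives $\phi_N^{(q)}(t_j)/\sig_N^q$ for $q \leq r - 1$, where $\phi_N(t) = \bbE[e^{itS_N}]$. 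Hence it suffices to establish $|\phi_N^{(q)}(t_j)| = o(\sig_N^{q-(r-1)})$ for every $q \leq r-1$ and every nonzero resonant $t_j$.

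To prove this derivative bound, I would partition $[1,N]$ into $\ell_r + 1$ blocks with boundaries $\vec X = (X_{j_1}, \ldots, X_{j_{\ell_r}})$, decompose $S_N = T_1 + \cdots + T_{\ell_r + 1}$, and use the Markov property \eqref{psi} to factorise $\phi_{N|\vec X}(t) \approx \prod_i \phi_i(t|\vec X)$ up to exponentially small corrections. Writing $T_i = \bbE[T_i|\vec X] + \tilde T_i$ with centered $\tilde T_i$, the Leibniz expansion produces multinomial terms $\prod_i \phi_i^{(q_i)}(t_j|\vec X)$ in which the centered moment bounds $\bbE[|\tilde T_i|^{p}|\vec X] \lesssim L_i^{p/2}$ contribute a total polynomial factor of order $\sig_N^q$. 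Choosing $\ell_r$ of order $r$ ensures that every multinomial term retains a partial product $\prod_{i : q_i = 0} \phi_i(t_j|\vec X)$ of enough undifferentiated block characteristic functions, which is then controlled in $L^1$ via the hypothesis, giving $o(\sig_N^{q - (r-1)})$ after averaging over $\vec X$.

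The principal obstacle lies in this last step: the hypothesis directly controls only the \emph{full} product $\prod_i \phi_i(t_j|\vec X) = \bbE[e^{it_j S_N}|\vec X]$ in $L^1$, whereas the Leibniz expansion produces \emph{partial} products over the blocks with $q_i = 0$. I expect to resolve this by exploiting the uniformity in the hypothesis across all choices of $\ell \leq \ell_r$ conditioning tuples: for an index subset $I$ corresponding to $q_i = 0$, apply the hypothesis to an alternative conditioning consisting only of the block endpoints adjacent to $I$, so that the partial product becomes the full conditional characteristic function of the sub-sum $\sum_{i \in I} T_i$ projected via Markov. The $\psi$-mixing bound \eqref{psi} is essential both to justify the approximate factorisation of the conditional characteristic function and to legitimise this reduction; the precise value of $\ell_r$ is then determined by the combinatorial count of multinomial configurations needed to accommodate all derivative orders $q \leq r - 1$.
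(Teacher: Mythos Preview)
Both parts of your proposal have genuine gaps.

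For (i), the direct Fourier approach loses a logarithmic factor that cannot simply be ``absorbed.'' Splitting $\sum_k \bbP(S_N=k)\,e^{it_j k}$ at $|k-a_N|\sim \sig_N\sqrt{\log\sig_N}$, the tail is handled by sub-Gaussian bounds, but the bulk contributes $O(\sig_N\sqrt{\log\sig_N})$ terms each of size $o(\sig_N^{-r})$, yielding only $o\big(\sig_N^{-(r-1)}\sqrt{\log\sig_N}\big)$, which is strictly weaker than the required $o(\sig_N^{-(r-1)})$. The Edgeworth hypothesis \eqref{EdgeDef} imposes no structure on the error beyond uniform smallness, so there is no cancellation to exploit and no Abel-summation trick available. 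The paper avoids this entirely by working inside the explicit representation \eqref{ExForm} of the resonant contribution: the contribution of $t_j$ is written as $\sig_N^{-1}e^{it_jk}\fg(\hat k_N)$ times a polynomial in $\hat k_N$ with coefficients $\bbE\big[\bbE(e^{it_jS_N}|\cE)\,A_{t_j,s,N,\cE}\big]$; invoking \cite[Lemma~5.1]{DH}, the vanishing of this contribution to order $o(\sig_N^{-r})$ forces all these coefficients to be $o(\sig_N^{-(r-1)})$, and one peels them off inductively in $s$ (using that $H_s$ has degree exactly $s$) down to the constant term, which is $\bbE[e^{it_jS_N}]$.

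For (ii), the premise that ``each $P_{a,b,N}$ is a universal polynomial in the normalised derivatives $\phi_N^{(q)}(t_j)/\sig_N^q$'' is incorrect, and the rest of your argument rests on it. The polynomials in Theorem~\ref{MainThm} are \emph{not} obtained by Taylor expanding the unconditional characteristic function at $t_j$: indeed $|\bbE[e^{it_jS_N}]|$ may itself be $o(\sig_N^{-(r-1)})$, so $\log\bbE[e^{itS_N}]$ has no useful expansion near $t_j$. Instead the paper first conditions on $\cE=\sigma\{X_n:n\in\cB\}$, where $\cB$ is precisely the set of ``bad'' indices with $q_n(m)\geq\bar\eps$; after this conditioning the remaining summands are nearly deterministic modulo $m$, the conditional log-characteristic function $\Gamma_{t_j,N,\cE}$ of \eqref{Gamm def} is well defined near $0$, and the coefficients become random variables $A_{t_j,s,N,\cE}$ measurable with respect to $\cE$. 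Proposition~\ref{Suff} then shows (see \eqref{Form}--\eqref{G-Prod}) that after taking expectation each coefficient is a finite sum of terms $\bbE\big[\bbE(e^{it_jS_N}\,|\,\bar X_{k_1},\dots,\bar X_{k_{\ell_r}})\,G_{k_1,\dots,k_{\ell_r},N}\big]$ with all conditioning indices in $\cB$, and establishes the key estimate $\sum_{k_1,\dots,k_{\ell_r}}\|G_{k_1,\dots,k_{\ell_r},N}\|_{L^\infty}=O(1)$, from which the hypothesis gives $o(\sig_N^{-(r-1)})$ directly. Your block/Leibniz scheme with an arbitrary partition neither produces this structure nor respects the restriction $j_i\in\cB$ in the hypothesis, and your proposed fix for the partial-product obstacle fails because the hypothesis controls $\bbE[e^{it_jS_N}\,|\,\cdots]$ for the \emph{full} sum $S_N$, not for sub-sums $\sum_{i\in I}T_i$.
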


The number $\ell_r$ in part (ii) can be recovered from the proof of the theorem
(for instance, we have $\ell_2=6$).

The reason that Theorem \ref{EdgStable} follows from  Theorem \ref{Edge}
is that we can apply it to the conditional law of $S_N$ given a finite number of $X_j$'s, and that in part (i) the term $o(\sig_N^{-(r-1)})$ depends only on the error term of the Edgeworth expansions.

\section{Background and some preparations}
\subsection{A generalized sequential Perron-Frobenius theorem}
Let $B_j$ denote the space of bounded functions of $X_j$, and let $\|\cdot\|_{\infty}$ be the supremum norms. Let $B_j^*$ denote the dual space of $B_j$.

Let us take uniformly bounded real-valued functions $U_n=u_n(X_n)$ and for every complex number $z$ consider the operator $R_{z}^{(j)}: B_j\mapsto B_{j+1}$ 
defined by
$$
R_{z}^{(j)}g(x)=\bbE[e^{iU_{j+1}+zY_{j+1}}g(X_{j+1})|X_j=x].
$$
For each $j$ and $n$ in $\bbN$ let 
$$
R_z^{j,n}=R_{z}^{(j)}\cdots R_z^{(j+n-1)}.
$$

 The next result serves as one of our key technical tools.
\begin{theorem}\label{RPF}
There exist a number $\del_0>0$ which depends only on the
uniform bound $K$ of  $Y_n$ and on the ellipticity constant of $X_n$ so that
 the following holds.
If $\DS \sup_n\|U_n\|_{L^1}\!+\!|z|\!<\!\del_0$
 then 
for every $j\in\bbZ$ there exists a  triplet
$\la_j(z)$, $h_j^{(z)}$ and $\nu_j^{(z)}$ consisting of a nonzero complex number
$\la_j(z)$, a  complex function $h_j^{(z)}\in B_{j}$ and a 
continuous linear functional $\nu_j^{(z)}\in B_j^*$ satisfying  $\nu_j^{(z)}(\textbf{1})=1$, $\nu_j^{(z)}(h_j^{(z)})=1$,
\[
R_z^{(j)}h_{j+1}^{(z)}=\la_j(z)h_j^{(z)}
,\,\,\text{ and }\,\,(R_z^{(j)})^*\nu_{j}^{(z)}=\la_j(z)\nu_{j+1}^{(z)}
\]
where $(R_z^{(j)})^*:B_{j}^*\to B_{j+1}^*$ is the dual operator of $R_j^{(z)}$ and $B_j^*$ is the dual space of $B_j$.
When $z=t\in\bbR$ and $U_n\equiv 0$ then  $h_j^{(t)}$ is strictly positive, $\nu_j^{(t)}$ is a probability measure and there are constants $a,b>0$, so that $\la_j^{(t)}\in[a,b]$ and $h_j^{(t)}\geq a$. When $t=0$ we have $\la_j(0)=1$ and $h_j^{(0)}=\textbf{1}$.

Moreover, this triplet is analytic and uniformly bounded.
Namely, the maps 
\[
\la_j(\cdot):\CN \to\bbC,\,\, h_j^{(\cdot)}:\CN \to B_j\,\,\text{ and }\,
\nu_j^{(\cdot)}:\CN\to B_j^*
\]
where $ \CN=\{z\in\bbC:\,|z|<\del_0\}$
are analytic, 
and there exists a constant $C>0$ so that
\begin{equation}\label{UnifBound.1}
\max\Big(\sup_{z\in \CN }|\la_j(z)|,\, 
\sup_{z\in \CN }\|h_j^{(z)}\|_{\infty},\, \sup_{z\in \CN}
\|\nu^{(z)}_j\|_{\infty}\Big)\leq C
\end{equation}
where $\|\nu\|_{\infty}$ is the 
operator norm of a linear functional $\nu:B_j\to\bbC$. 
In addition, $\la_j(z),h_j(z)$ and $\nu_j(z)$ depend continuously on $U_j$ in the sense that they converge uniformly to the triplets  corresponding to the choice $U_j=0$ as 
$\DS \sup_n\|U_n\|_{L^1}\to 0$. 

Furthermore, there exist  constants $C>0$  and $\del\in(0,1)$ such that for any 
$n\geq1$, $j\in\bbZ$, $z\in U$ and $q\in B_{j+n}$,
\begin{equation}\label{Exp Conv final.0.1.1}
\bigg\|\frac{R_z^{j,n}q}{\la_{j,n}(z)}
-\big(\nu_{j+n}^{(z)}(q)\big)h_j^{(z)}\bigg\|_\infty\leq\\
C\|q\|_\infty\cdot \del^n
\end{equation}
and 
\begin{equation}\label{Exp Conv final dual.0.1.1}
\bigg\|\frac{(R_z^{j,n})^*\mu}{\la_{j,n}(z)}
-\big(\mu h_j^{(z)}\big)\nu_{j+n}(z)\bigg\|_\infty\leq \\
C \|\mu\|_\infty\cdot \del^n
\end{equation} 
where $\DS \la_{j,n}(z)=\prod_{k=0}^{n-1}\la_{j+k}(z)$.
Here $\|\cdot\|_\infty$ are the appropriate operator norms corresponding the the norms in the spaces $B_j$.
\end{theorem}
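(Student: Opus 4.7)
The plan is to prove Theorem~\ref{RPF} in two phases: first construct the triplet for real parameters using a sequential Perron-Frobenius argument based on Birkhoff's contraction theorem, then extend analytically to complex parameters in a small neighborhood by perturbation. At $z=0$ with $U_n\equiv 0$, $R_0^{(j)}$ is the Markov transition operator and the triplet is explicit: $\la_j(0)=1$, $h_j^{(0)}=\mathbf{1}$, and $\nu_j^{(0)}=\mu_j$, the law of $X_j$, for which $(R_0^{(j)})^*\mu_j=\mu_{j+1}$ by the tower property. For real $z=t$ of small absolute value with $U_n\equiv 0$, the operator $R_t^{(j)}$ remains positive, and the Doeblin condition \eqref{Doeblin} combined with the uniform bound $\|Y_n\|_\infty\le K$ implies that $R_t^{(j)}$ sends the positive cone into a sub-cone of uniformly bounded Hilbert (projective) diameter. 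By Birkhoff's theorem this yields a uniform contraction of the Hilbert metric with a factor $\del\in(0,1)$ independent of $j$ and $t$. A standard sequential Perron-Frobenius argument then produces $h_j^{(t)}$ as the projective limit of $R_t^{j,n}q$ for any strictly positive $q$, $\nu_j^{(t)}$ dually from iterated adjoints, and $\la_j(t)$ from $R_t^{(j)}h_{j+1}^{(t)}=\la_j(t)h_j^{(t)}$. Uniform positivity of $h_j^{(t)}$ and the bounds $\la_j(t)\in[a,b]$ follow from the uniform Hilbert diameter bound; the normalizations $\nu_j^{(t)}(\mathbf{1})=\nu_j^{(t)}(h_j^{(t)})=1$ are imposed by rescaling; and the exponential estimates \eqref{Exp Conv final.0.1.1}--\eqref{Exp Conv final dual.0.1.1} in this real case follow from the contraction rate $\del$.

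For $(z,U)$ in a small complex neighborhood of the origin, I would write $R_z^{(j)}=R_0^{(j)}+E^{(j)}(z,U)$, where $E^{(j)}$ is holomorphic in $z$ and of operator norm $O(|z|+\sup_n\|U_n\|_{L^1})$. Two complementary tools apply: either (i) a sequential analogue of analytic perturbation theory that leverages the uniform spectral gap from Phase~1 (i.e.\ the rank-one convergence $R_0^{j,n}/\la_{j,n}(0)\to\nu_{j+n}^{(0)}(\cdot)\mathbf{1}$ at geometric rate $\del^n$), or (ii) Rugh's complex Hilbert-metric construction, which produces a complex cone around the real one that still contracts under small complex perturbations. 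Either approach shows that for $|z|+\sup_n\|U_n\|_{L^1}<\del_0$, with $\del_0$ depending only on $K$ and the ellipticity constant, each perturbed operator admits a triplet $(\la_j(z),h_j^{(z)},\nu_j^{(z)})$ that is holomorphic and uniformly bounded on $z\in\CN$, reduces to the real triplet when $z$ is real and $U=0$, and satisfies the required normalizations after rescaling. Continuity in $U$ is then immediate from the operator-norm continuity of $U\mapsto R_z^{(j)}$.

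To conclude, normalize the perturbed operators by setting $\widehat R_z^{(j)}g:=\la_j(z)^{-1}(h_j^{(z)})^{-1}R_z^{(j)}(h_{j+1}^{(z)}g)$, so that $\widehat R_z^{(j)}\mathbf{1}=\mathbf{1}$ and the appropriately rescaled $\nu_j^{(z)}$ is invariant on the dual side. At $(z,U)=(0,0)$ these normalized operators inherit the Hilbert contraction of Phase~1, and by continuity of the family in $(z,U)$ the contraction persists (possibly with a slightly larger rate) throughout a neighborhood, yielding \eqref{Exp Conv final.0.1.1} and \eqref{Exp Conv final dual.0.1.1} uniformly in $j$ and $z\in\CN$. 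The main obstacle is the non-autonomous nature of the setup: one cannot invoke the spectral theorem for any single operator, so all spectral-gap-type estimates must be established uniformly in $j$ via cone contraction. The most delicate step is the complex analytic extension, where the uniform real contraction has to be transferred to complex perturbations with uniform-in-$j$ analytic control, requiring either Rugh-type complex cones or a carefully set-up sequential fixed-point argument.
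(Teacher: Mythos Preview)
Your proposal is essentially correct, and via option~(ii) it coincides with the paper's approach. The paper is organized slightly more directly: it does not first build the theory for real $t$ and then perturb, but instead establishes the real Birkhoff contraction only for the unperturbed Markov operator $Q_j=R_0^{(j)}|_{U\equiv0}$ on the cone $\cK_{j,L}=\{g>0:g(x_1)\le Lg(x_2)\}$, and then passes in one step to the full complex family $R_z^{(j)}$ using Rugh's canonical complexification $\cK_{j,L,\bbC}$. The crucial estimate you state only at the operator-norm level is actually needed pointwise relative to $Q_jg$: for $g\in\cK_{j+1,L}$,
\[
\bigl|R_z^{(j)}g(x)-Q_jg(x)\bigr|\le C'L\bigl(\|U_{j+1}\|_{L^1}+|z|K\bigr)\,Q_jg(x),
\]
which follows from $|e^{iU_{j+1}+zY_{j+1}}-1|\le C(|U_{j+1}|+|z|K)$, the uniform ellipticity $p_j\ge C^{-1}$, and the cone inequality $\|g\|_\infty\le L\inf g$. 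This pointwise form is exactly what verifies the dual-cone perturbation hypothesis needed for Rugh's theorem, and it is the reason one may assume only $L^1$-smallness of $U_n$ rather than $L^\infty$-smallness. Your option~(i), a sequential Kato-type analytic perturbation argument, is not what the paper does and would need independent justification in the non-autonomous setting; your final normalization-and-continuity paragraph is also unnecessary once the complex-cone contraction is in hand, since \eqref{Exp Conv final.0.1.1}--\eqref{Exp Conv final dual.0.1.1} then follow directly from the bounded complex projective diameter.
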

\begin{proof}
This theorem is proved similarly to \cite[Ch. 6]{HK}
(which makes a stronger assumption $\DS \sup_n\|U_n\|_{L^\infty}<\del_0$).
 In the course of the proof we will use several definitions and properties of real and complex cones. In order not to overload the paper we will not present them here, and instead we refer to the Appendix of \cite{HK} for a summary of all the necessary background.

Let $Q_j$ be the Markov operator given by 
$$
Q_jg(x)=\bbE[g(X_{j+1})|X_j=x]=\int p_j(x,y)g(y)d\mu_{j+1}(y).
$$
Then $Q_j$ maps $B_{j+1}$ to $B_{j}$ and the corresponding operator norm equals $1$. 
Let $\cK_{j,L}$, $L>0$ be the real Birkhoff cone which consists  of the positive function $g_j$ on the range of $X_{j}$ so that $g>0$ and $g(x_1)\leq Lg(x_2)$ for all $x_1,x_2$.  
Then, since $C^{-1}\leq p_{j}(x,y)\leq C$ for all $x$ and $y$, we see that for every nonnegative bounded function $g$ on the state space of $X_{j+1}$ we have $Q_j g\in \cK_{C^2,j}$. Let $L=2C^2$. Then by \cite[Lemma 6.5.1]{HK} the projective diameter of $\cK_{C^2,j}$ inside $\cK_{L,j}$ (with respect to the real Hilbert metric associated with the cone $\cK_{j,L}$) does not exceed $d_0=d_0(C)=2\ln (2C^2)$. We conclude that 
$Q_j\cK_{j+1,L}\subset \cK_{j,L}$, and the projective diameter of the image  is bounded above by $d_0$.

Next, let us explain in what  sense $R_{z}^{(j)}$ is a small perturbation of $Q_j$ with respect to the dual of the cones $\cK_{j,L}$. Since $U_j$ and $f_j$ are uniformly bounded and because of the uniform ellipticity we get that
for every point $x$ and a function $g\in\cK_{j+1,L}$ we have 
\begin{equation}\label{Pert}
|R_z^{(j)}g(x)-Qg(x)|\leq C\|g\|_\infty\bbE[|U_{j+1}+|z||Y_{j+1}|]\leq 
C'L(\|U_{j+1}\|_{L^1}+|z|K)Qg(x)
\end{equation}
for some constant $C'$. Next, let us recall that the dual of the cone $\cK_{j,L}$ is generated by the the linear functional $h\to h(x_0)$ and $h\to g(x_1)-C^{-2}h(x_2)$ where $x_0,x_1,x_2$ are arbitrary points in the state space of $X_j$.
Now, using \eqref{Pert}, by repeating the arguments in \cite[Proposition 6.6.1]{HK} we see that 
if $s$ is one of the latter linear functionals then for every $g\in\cK_{K,j+1}$ we have
$$
\left|s(R_z^{(j)}g)-s(Q^{(j)}g)\right|\leq AC^{4}(\|U_{j+1}\|_{L^1}+|z|K)
$$ 
where $A$ is an absolute constant. By \cite[Theorem A.2.4]{HK} (taking into account 
Theorem~6.2.1 and Lemma 6.4.1 of \cite{HK}), there is a constant $\del_0$ so that if $\|U_{j+1}\|_{L^1}+|z|K<\del_0$ then $R_z^{(j)}g$ maps the canonical complexification $\cK_{L,j+1,\bbC}$ of $\cK_{K,j}$ to the canonical complexification $\cK_{L,j,\bbC}$  and the projective diameter of the image (with respect to the complex Hilbert metric associated with the complex cone $\cK_{j,L,\bbC}$)  does not exceed $2d_0$.
Once this is established, the rest of the proof of Theorem \ref{RPF} proceeds as in \cite[Ch. 6]{HK} by a  repeated application of a conic perturbation theorem due to H.H. Rugh \cite{Rug} and the explicit limiting expressions for $\la_j(z), h_j^{(z)}$ and $\nu_j^{(z)}$.
\end{proof}

\subsection{Behavior around $0$}
Let
$$
\Lambda_{N}(h)=\ln\bbE[e^{ih(S_N-\bbE[S_N])/\sig_N}]+h^2/2.
$$
 By \cite[Section 5]{DH BE} for every $m$ there exist constants $\del_m,C_m>0$ so that for all $j\geq 3$ 
$$
\sup_{h\in[-\del_m\sig_N,\del_m\sig_N]}|\Lambda_{N}^{(j)}(h)|\leq C_m\sig_N^{-(j-2)}.
$$
Set
$$
Q_{r,N}(t)=\sum_{\bar k}\frac1{k_1!\cdots k_{r}!}\left(\frac{\Lambda_{N}^{(3)}(0)}{3!}\right)^{k_1}\cdots \left(\frac{\Lambda^{(r+2)}_{N}(0)}{(r+2)!}\right)^{k_{r}}(it)^{3k_1+...+(r+2)k_{r}}
$$
where the summation ranges over the collection of $r$ tuples of nonnegative integers $(k_1,...,k_{r})$ that are not all $0$ so that $\DS \sum_{j} jk_j\leq r$.
Then 
\begin{equation}\label{Q}
Q_{r,N}(t)=\sum_{j=1}^{r}\sig_N^{-j}P_{j,N}(t)
\end{equation}
with 
\begin{equation}\label{P}
P_{j,N}(x)=\sum_{\bar k\in A_{j}}C_{\bar k}\prod_{j=1}^{s}\left(\sig_N^{-2}\Lambda_N^{(j+2)}(0)\right)^{k_j}(ix)^{3k_1+...+(s+2)k_{s}}, 
\end{equation}
where $A_{j}$ is the set of all  tuples of nonnegative integers $\bar k=(k_1,...,k_{s}), k_s\not=0$ for some $s=s(\bar k)\geq 1$ so that $\DS \sum_{s}sk_s=j$ (note that when $j\leq r$ then $s\leq r$ since $k_s\geq1$). Moreover
 $$
 C_{\bar k}=\prod_{j=1}^{s}\frac{1}{k_j!(j+2)^{k_j}}.
 $$

\begin{lemma}(\cite[Section 4.3]{DH BE}).\label{L}
Let $W_N=(S_N-\bbE[S_N])/\sig_N$.
For every $r\geq1$ there are constants $\del_r,C_r>0$ so that for every $t\in[-\del_r\sig_N,\del_r\sig_N] $ we have
$$
\left|\bbE[e^{itW_N}]-e^{-t^2/2}(1+Q_{r,N}(t))\right|\leq Ce^{-ct^2}\sig_N^{-(r+1)}
\max\left(|t|, |t|^{(r+3)(r+2)}\right)
$$
where $c>0$ is a constant  independent of $r$ (and, by decreasing $\del_r$, 
 it can be  made arbitrarily close to $1/2$).
\end{lemma}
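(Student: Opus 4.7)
The plan is to use the identity $\bbE[e^{itW_N}]=e^{-t^2/2}e^{\Lambda_N(t)}$, valid by definition of $\Lambda_N$, and reduce the lemma to an asymptotic expansion of $e^{\Lambda_N(t)}$. Since $W_N$ has mean zero and unit variance, $\Lambda_N(0)=\Lambda_N'(0)=\Lambda_N''(0)=0$, so $\Lambda_N$ vanishes to third order at the origin. Combined with the derivative bound $|\Lambda_N^{(j)}(h)|\leq C_j\sig_N^{-(j-2)}$ on $[-\del_j\sig_N,\del_j\sig_N]$ recalled just above, this gives a Taylor series for $\Lambda_N$ whose successive terms each carry an additional factor of $\sig_N^{-1}$.

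The first step is to Taylor-expand $\Lambda_N$ at $0$ to degree $r+2$, writing $\Lambda_N(t)=A_N(t)+R_N(t)$ with
$$A_N(t)=\sum_{j=1}^{r}b_{j,N}\,t^{j+2},\qquad b_{j,N}=\frac{\Lambda_N^{(j+2)}(0)}{(j+2)!},$$
so that $|b_{j,N}|\leq C\sig_N^{-j}$; by Lagrange's form of the remainder together with the bound on $|\Lambda_N^{(r+3)}|$, $|R_N(t)|\leq C\sig_N^{-(r+1)}|t|^{r+3}$ on $|t|\leq\del_{r+3}\sig_N$. After further shrinking $\del_r$ if needed, both $|A_N(t)|$ and $|R_N(t)|$ are bounded by $\ve t^2$ on $|t|\leq\del_r\sig_N$, with $\ve>0$ as small as we please; this keeps the subsequent exponentials under control.

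Next, factorize $e^{\Lambda_N(t)}=e^{A_N(t)}e^{R_N(t)}$ and expand $e^{A_N(t)}$ via the multinomial theorem:
$$e^{A_N(t)}=\sum_{\bar k}\prod_{j=1}^{r}\frac{(b_{j,N})^{k_j}}{k_j!}\,t^{\sum_{j}(j+2)k_j}.$$
Matching with \eqref{Q}--\eqref{P}, the tuple $\bar k=0$ contributes $1$, the nonzero tuples with $\sum jk_j\leq r$ reproduce exactly $Q_{r,N}(t)$, and the tuples with $\sum jk_j\geq r+1$ constitute the error. A term with $\sum jk_j=s\geq r+1$ and $k=\sum k_j$ is bounded by $C^k\sig_N^{-s}|t|^{s+2k}/\prod k_j!$; factoring out $\sig_N^{-(r+1)}$ and using $|t|/\sig_N\leq\del_r$ turns this into $\sig_N^{-(r+1)}$ times a polynomial in $|t|$, and summability over $\bar k$ is controlled by the convergent exponential series, yielding an overall bound $C\sig_N^{-(r+1)}e^{\ve t^2}\cdot\mathrm{poly}(|t|)$. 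The contribution of $e^{R_N(t)}-1$ is handled identically: $|e^{R_N(t)}-1|\leq|R_N(t)|e^{\ve t^2}\leq C\sig_N^{-(r+1)}|t|^{r+3}e^{\ve t^2}$.

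Multiplying the combined error by the Gaussian prefactor $e^{-t^2/2}$ yields a bound of the form $C\sig_N^{-(r+1)}e^{-(1/2-2\ve)t^2}\cdot\mathrm{poly}(|t|)$; the constant $c:=1/2-2\ve$ is arbitrarily close to $1/2$ by choice of $\del_r$. The exponent $(r+3)(r+2)$ in the statement is a crude majorant on the polynomial degree, obtained by tracking the largest power of $|t|$ appearing among the tail terms (coming, e.g., from $(A_N)^{r+3}$, whose leading monomial has degree $(r+2)(r+3)$). For $|t|\leq 1$ the polynomial is replaced by $|t|$ using that $e^{\Lambda_N(t)}-1-Q_{r,N}(t)$ vanishes at $0$ to order at least $|t|^{r+3}$, the lowest surviving monomial being carried by $R_N$. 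The main technical effort is purely combinatorial: verifying that the sum over all tuples $\bar k$ with $\sum jk_j\geq r+1$ is dominated uniformly in $N$ by a single polynomial in $|t|$; this succeeds because the bounds $|b_{j,N}|\leq C\sig_N^{-j}$ and the threshold $\del_r$ are $N$-independent.
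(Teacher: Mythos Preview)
The paper does not prove this lemma; it is quoted from \cite[Section~4.3]{DH BE}, and the closely related Proposition~\ref{LamProp} is likewise only stated with a pointer to that reference. Your argument is the standard cumulant-expansion proof and is essentially correct: write $\bbE[e^{itW_N}]=e^{-t^2/2}e^{\Lambda_N(t)}$, Taylor-expand $\Lambda_N$ to order $r+2$, expand the exponential of the Taylor polynomial, identify $1+Q_{r,N}$ with the terms satisfying $\sum jk_j\leq r$, and control the remainder using the bounds $|\Lambda_N^{(j)}|\leq C_j\sig_N^{-(j-2)}$ together with $|A_N(t)|,|R_N(t)|\leq\ve t^2$.

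One refinement worth making explicit, since your last paragraph is a bit hand-wavy on the tail sum: the infinite sum over tuples with $\sum jk_j\geq r+1$ is most cleanly split as
\[
\Big(e^{A_N}-\sum_{m=0}^{r}\frac{A_N^m}{m!}\Big)
\;+\;\Big(\sum_{m=0}^{r}\frac{A_N^m}{m!}-1-Q_{r,N}\Big).
\]
For the first piece use $\big|e^z-\sum_{m\leq r}z^m/m!\big|\leq |z|^{r+1}e^{|z|}/(r+1)!$ together with $|A_N(t)|\leq C\sig_N^{-1}|t|^3/(1-\del_r)$, giving $C\sig_N^{-(r+1)}|t|^{3(r+1)}e^{\ve t^2}$. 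The second piece is a \emph{finite} sum over tuples with $\sum k_j\leq r$ but $\sum jk_j\geq r+1$, hence a genuine polynomial in $t$ of degree at most $r(r+2)$ with coefficients $O(\sig_N^{-(r+1)})$. Together with the $(e^{R_N}-1)e^{A_N}$ contribution (degree $r+3$), this shows that $(r+2)(r+3)$ is indeed a generous upper bound on the polynomial degree, and avoids having to argue directly that the full tail sum is dominated uniformly in $N$.
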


 Lemma \ref{L} will be crucial to determine the contribution of the resonant point $0$. To determine the contribution of other non-resonant points we will also need the following more general result, whose proof proceeds exactly as the proof of \cite[Proposition 23]{DH BE}.

\begin{proposition}\label{LamProp}
Fix some integer $r\geq 1$.
Let $\cL_N:\bbR\to\bbC$ be an $r+2$ times differentiable function  so that 
$$
\cL_N(0)=\cL_N'(0)=\cL_N''(0)=0
$$
and that for each $3\leq j\leq r+2$ for every $t\in[-\del_r,\del_r]$
we have
$$
\left|\cL_N^{(j)}(t)\right|\leq A_r\sig_N^{2}
$$
where $\del_r$ and $A_r$ are constants which do not depend on $t$ and $N$. Set 
$
\DS \bar\cL(t)=\cL(t/\sig_N).
$
Then there are constants $0<c<\frac12$ and $B_r,\ve_r>0$ depending only on $\del_r$ and $A_r$ so that for every $t\in[-\ve_r\sig_N,\ve_r\sig_N]$ we have
$$
\left|e^{\bar \cL_N(t)}-\cH_{N,r}(t)\right|\leq B_r(\sig_N)^{-(r+1)}|\max(|t|, |t|^{(r+2)(r+3)})
$$
where 
\begin{equation}\label{H def0}
\cH_{N,r}(t)=
1+\sum_{\bar k}\frac1{k_1!\cdots k_{r}!}\left(\frac{\bar\cL_{N}^{(3)}(0)}{3!}\right)^{k_1}\cdots \left(\frac{\bar\cL^{(r+2)}_{N}(0)}{(r+2)!}\right)^{k_{r}}(it)^{3k_1+...+(r+2)k_{r}}
\end{equation}
and the summation runs over the collection of $r$ tuples of nonnegative integers 
$(k_1,...,k_{r})$ that are not all $0$ so that $\DS \sum_{j} jk_j\leq r$. 
\end{proposition}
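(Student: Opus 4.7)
The plan is to carry out a double Taylor expansion — first of $\bar\cL_N(t)$ around $t=0$, and second of $e^x$ around $x=0$ — then substitute, expand by the multinomial theorem, and identify the terms of each order in $\sig_N^{-1}$. By the chain rule $\bar\cL_N^{(j)}(t)=\sig_N^{-j}\cL_N^{(j)}(t/\sig_N)$, the hypothesis translates to
$$\bar\cL_N(0)=\bar\cL_N'(0)=\bar\cL_N''(0)=0,\qquad |\bar\cL_N^{(j)}(t)|\leq A_r\sig_N^{-(j-2)}\text{ for }3\leq j\leq r+2,$$
throughout $|t|\leq\del_r\sig_N$. In particular, the Taylor coefficients at $0$ satisfy $|\bar\cL_N^{(j)}(0)|\leq A_r\sig_N^{-(j-2)}$.

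First I would write $\bar\cL_N(t)=P(t)+R(t)$, where $P(t):=\sum_{j=3}^{r+2}\bar\cL_N^{(j)}(0)\,t^j/j!$ is the degree-$(r+2)$ Taylor polynomial and $R(t)$ is the remainder, bounded by an integral-form Taylor estimate as $|R(t)|\leq CA_r\sig_N^{-(r+1)}|t|^{r+3}$ on $|t|\leq\del_r\sig_N$ — the extra factor $|t|/\sig_N$ beyond the naive bound is obtained by comparing $\bar\cL_N^{(r+2)}(\xi)$ with $\bar\cL_N^{(r+2)}(0)$ inside the remainder integral and using the uniform bound. Shrinking $\ve_r\leq\del_r$ if necessary, one may arrange $|\bar\cL_N(t)|$ to stay in a bounded region on $|t|\leq\ve_r\sig_N$, so that the Taylor series of $e^x$ can be truncated with explicit error. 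Expanding
$$e^{\bar\cL_N(t)}=\sum_{k=0}^{M}\frac{(P(t)+R(t))^k}{k!}+E_M\bigl(\bar\cL_N(t)\bigr),\qquad |E_M(x)|\leq\frac{|x|^{M+1}e^{|x|}}{(M+1)!},$$
with $M$ chosen (roughly $M=r+2$) so that $E_M$ is of order $\sig_N^{-(r+1)}$ times a polynomial in $|t|$ of degree $(r+2)(r+3)$, and invoking the binomial theorem, the cross-terms involving at least one factor of $R(t)$ contribute errors controlled by $C\sig_N^{-(r+1)}|t|^{r+3}$ times polynomials of uniformly bounded degree. The pure terms $P(t)^k/k!$ are expanded via the multinomial theorem, producing a sum over tuples $\bar k=(k_1,\ldots,k_r)$ of products $\prod_j(\bar\cL_N^{(j+2)}(0)/(j+2)!)^{k_j}\,t^{\sum(j+2)k_j}$.

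Tuples with $\sum_j jk_j\leq r$ assemble precisely into $\cH_{N,r}(t)-1$, whereas tuples with $\sum_j jk_j\geq r+1$ produce residual terms of magnitude $C A_r^{\sum k_j}\sig_N^{-(r+1)}|t|^{\sum(j+2)k_j}$, contributing an error of the desired form. The main obstacle is the bookkeeping: one must verify that every source of error — the Taylor remainder $R(t)$, the truncation of $e^x$, and the high-order tuples in the multinomial expansion — packages into the common bound $B_r\sig_N^{-(r+1)}\max(|t|,|t|^{(r+2)(r+3)})$. The exponent $(r+2)(r+3)$ arises as the maximal degree in the multinomial expansion of $P(t)^{r+3}$ (where $P$ has degree $r+2$), which is the cutoff at which truncating the series for $e^x$ yields an error of order $\sig_N^{-(r+1)}$. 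The linear factor $|t|$ in the $\max$ is the behavior of the error for small $|t|$, since every error term contains a factor $|t|^s$ with $s\geq 1$ and reduces to $|t|$ when $|t|\leq 1$.
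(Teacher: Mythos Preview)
Your scheme is the standard one and matches the paper, which does not argue the proposition directly but refers to \cite[Proposition~23]{DH BE}, where the same double Taylor expansion plus multinomial bookkeeping is carried out. There are, however, two places where your argument as written does not close.

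The first is the Taylor remainder estimate. With only $r+2$ derivatives available, the remainder after the degree-$(r+2)$ polynomial is
\[
R(t)=\int_0^t \frac{(t-s)^{r+1}}{(r+1)!}\bigl(\bar\cL_N^{(r+2)}(s)-\bar\cL_N^{(r+2)}(0)\bigr)\,ds,
\]
and a \emph{uniform} bound $|\bar\cL_N^{(r+2)}|\leq A_r\sig_N^{-r}$ yields only $|\bar\cL_N^{(r+2)}(s)-\bar\cL_N^{(r+2)}(0)|\leq 2A_r\sig_N^{-r}$; it supplies no modulus of continuity and hence no extra factor $|t|/\sig_N$. You obtain $|R(t)|\leq C\sig_N^{-r}|t|^{r+2}$, not $C\sig_N^{-(r+1)}|t|^{r+3}$, and the former is \emph{not} dominated by $\sig_N^{-(r+1)}\max(|t|,|t|^{(r+2)(r+3)})$ near $|t|\sim 1$. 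To gain the missing factor one needs a bound on $\bar\cL_N^{(r+3)}$; in the paper's applications (see the hypotheses preceding Lemma~\ref{L} and Lemma~\ref{Pi'' lemma}(iii)) bounds on all higher derivatives are in fact available, so this appears to be a slight imprecision in the \emph{statement} of the proposition rather than a defect in your strategy.

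The second is your claim that $|\bar\cL_N(t)|$ can be kept bounded on $|t|\leq\ve_r\sig_N$ by shrinking $\ve_r$. From $|\bar\cL_N(t)|\leq \frac{A_r}{6}\sig_N^{-1}|t|^3$ one gets, at $|t|=\ve_r\sig_N$, only $|\bar\cL_N(t)|\leq \frac{A_r}{6}\ve_r^3\sig_N^2\to\infty$, so truncating the exponential series with uniform control on the full interval is not justified from the stated hypotheses alone. In the applications $\cL_N$ comes from a log-characteristic function and $e^{\bar\cL_N}$ is accompanied by an $e^{-t^2/2}$ factor that absorbs this growth; the otherwise unused constant $0<c<\tfrac12$ in the statement is presumably a vestige of exactly this. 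Your bookkeeping of the multinomial terms and your identification of the exponent $(r+2)(r+3)$ are otherwise correct.
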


\subsection{Mixing properties and moment estimates}
\begin{lemma}\label{MomEst}
For each $j$, let $G_j=g_j(X_j)$ be a real valued function of $X_j$ so that $\|G\|_\infty:=\sup_j\|G_j\|_{L^\infty}<\infty$. For every $k,n\in\bbN$ such that $k\leq n$ let 
$\DS G_{k,n}\!\!=\!\!\sum_{j=k}^{n}G_j$. Then
  \vskip0.1cm
(i) There are constants $C_1,C_2$ which depend only on the ellipticity constant $C$ so that
$$
C_1\sum_{j=k}^{n}\text{Var}(G_j)\leq\text{Var}(G_{k,n})\leq C_2\sum_{j=k}^{n}\text{Var}(G_j)
$$
  \vskip0.1cm
    
(ii) For every $p>2$ there is a constant $R_p$ depending only on $p,C$ and $\|G\|_\infty$ so that
$$
\left\|G_{k,n}-\bbE[G_{k,n}]\right\|_{L^p}\leq R_p(1+\sqrt{\text{Var}(G_{k,n})}).
$$
\end{lemma}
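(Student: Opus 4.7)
The plan splits the lemma into the two halves of (i), which hinge respectively on $\psi$-mixing and ellipticity, and (ii), which is a Rosenthal-type argument combined with (i).

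For the upper bound in (i), I expand
\[
\text{Var}(G_{k,n}) = \sum_{j=k}^n \text{Var}(G_j) + 2\sum_{k\leq j<j'\leq n}\text{Cov}(G_j,G_{j'}).
\]
Decomposing $G_j - \bbE G_j$ into positive and negative parts and applying \eqref{psi} to each product of parts yields the standard $\psi$-mixing covariance inequality
\[
|\text{Cov}(G_j,G_{j'})| \leq C_1\,\del^{|j-j'|}\,\bbE|G_j-\bbE G_j|\cdot\bbE|G_{j'}-\bbE G_{j'}| \leq C_1\,\del^{|j-j'|}\sqrt{\text{Var}(G_j)\text{Var}(G_{j'})}.
\]
AM--GM converts $\sqrt{\text{Var}(G_j)\text{Var}(G_{j'})}$ into $\tfrac12(\text{Var}(G_j)+\text{Var}(G_{j'}))$, and summing the geometric series in $|j-j'|$ gives the upper bound $\text{Var}(G_{k,n})\leq C_2\sum_j \text{Var}(G_j)$.

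For the lower bound, I condition on the $\sig$-algebra $\cF = \sigma(X_j: j-k\text{ even})$. By the Markov property, the $X_j$'s with $j-k$ odd are conditionally independent given $\cF$, each depending only on its two neighbors $X_{j-1},X_{j+1}$. A short Bayes computation using \eqref{Doeblin} shows that the conditional density of $X_j$ given $X_{j-1},X_{j+1}$ with respect to $\mu_j$ lies in $[C^{-4},C^4]$; the elementary fact that $\text{Var}_q(g) \geq \alpha\,\text{Var}_\mu(g)$ whenever $dq/d\mu \geq \alpha$ (proved via $\text{Var}_\mu(g) = \inf_c \int(g-c)^2\,d\mu$) then gives $\text{Var}(G_j \mid X_{j-1},X_{j+1}) \geq C^{-4}\text{Var}(G_j)$. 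The law of total variance yields $\text{Var}(G_{k,n}) \geq \bbE[\text{Var}(G_{k,n}\mid\cF)] \geq C^{-4}\sum_{j-k\text{ odd}}\text{Var}(G_j)$, and a symmetric argument for even indices completes (i); the boundary terms are handled by the analogous one-step bound with $p_j(x,\cdot)\in[C^{-1},C]$.

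For (ii), the strategy is to invoke the Rosenthal-type inequality
\[
\bbE|G_{k,n}-\bbE G_{k,n}|^p \leq C_p\Big(\text{Var}(G_{k,n})^{p/2} + \sum_{j=k}^n \bbE|G_j-\bbE G_j|^p\Big),
\]
which is classical for exponentially $\psi$-mixing bounded sequences (Yokoyama, Shao), proved by partitioning $\{k,\ldots,n\}$ into blocks of size $L=O(\log n)$ separated by gaps of length $\Theta(\log n)$, applying the classical independent Rosenthal inequality to the near-independent block sums, and absorbing the mixing errors via \eqref{psi}. Since $|G_j-\bbE G_j|\leq 2\|G\|_\infty$, we have $\bbE|G_j-\bbE G_j|^p \leq (2\|G\|_\infty)^{p-2}\text{Var}(G_j)$ for $p\geq 2$, so by (i) the sum on the right is at most $\mathrm{const}\cdot\text{Var}(G_{k,n})$. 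Taking $p$-th roots and using $x^{1/p}\leq 1+\sqrt{x}$ for $x>0$ and $p>2$ yields the claimed bound $\|G_{k,n}-\bbE G_{k,n}\|_p \leq R_p(1+\sqrt{\text{Var}(G_{k,n})})$. The main obstacle is establishing the Rosenthal inequality above with constants independent of $n$; the blocking reduction is routine but requires careful accounting of mixing errors, and this is the only technically delicate step in the proof.
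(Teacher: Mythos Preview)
Your argument is correct and substantially more self-contained than the paper's, which does not prove the lemma at all but simply cites \cite[Proposition~13]{Pel} and \cite[Proposition~3.2]{SV} for part (i) and \cite[Lemma~2.16]{DS}, \cite[Theorem~6.17]{MPU19} for part (ii). Your lower-bound step in (i) --- conditioning on the alternate-parity $\sigma$-algebra to obtain conditional independence and then using ellipticity to compare conditional and unconditional variances via $\text{Var}_q(g)\geq\alpha\,\text{Var}_\mu(g)$ when $dq/d\mu\geq\alpha$ --- is precisely the mechanism behind the Sethuraman--Varadhan result the paper invokes, and the paper itself reuses this device later (Lemma~\ref{FiberQuantLemma} and \S\ref{Sec4}). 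For (ii) your reduction to a Rosenthal inequality under exponential $\psi$-mixing, followed by the bound $\sum_j\bbE|G_j-\bbE G_j|^p\leq (2\|G\|_\infty)^{p-2}\sum_j\text{Var}(G_j)\leq C\,\text{Var}(G_{k,n})$ via (i), is exactly right. The only soft spot is the informal blocking sketch: taking $L=\Theta(\log n)$ and carrying it through literally would leave a stray factor $L^{p-2}$ multiplying $\text{Var}(G_{k,n})$, which is harmless once the variance is large but requires a separate remark when it is bounded. Since you invoke the Rosenthal inequality as a cited result this is not a gap in your argument; still, a cleaner route in the spirit of your own proof of (i) is to condition on alternate indices once more, apply the classical independent Rosenthal inequality conditionally, and take expectations --- this bypasses blocking entirely and yields constants depending only on $p$, $C$, and $\|G\|_\infty$.
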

The first part follows\footnote{Note that for one step uniformly elliptic chains the correlation coefficient of the chain as  defined in \cite{Pel} is strictly smaller than $1$.} 
from \cite[Proposition 13]{Pel}, while the  harder lower bound in the first estimate was obtained in \cite[Proposition 3.2 ]{SV}.  We also refer to  \cite[Theorem~2.1]{DS}, which  in the case
one step elliptic Markov chains and functionals of the form $G_j$
reduces to these variance estimates of part (i).

The second result was essentially obtained in \cite[Lemma 2.16]{DS} and it also follows from \cite[Theorem 6.17]{MPU19}.

\begin{lemma}[Proposition  1.11 (2), \cite{DS}]\label{DEC}
Let $G_j=g_j(X_j)$ be a real valued function of $X_j$ so that 
$\DS \sup_j\|G_j\|_{L^\infty}<\infty$. Then there exist $\del\in(0,1)$ and $A>0$ which depend only on the ellipticity constant $C$ and on $\|G\|_\infty$ so that for all $n,k\in\bbN$ we have 
$$
\left|\text{\rm Cov}\big(g_n(X_n), g_{n+k}(X_{n+k})\big)\right|\leq A\del^k.
$$
\end{lemma}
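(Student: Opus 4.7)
The plan is to derive the covariance bound directly from the spectral gap already encoded in Theorem \ref{RPF}, applied at the point $z=0$ with $U_n\equiv 0$. In this regime Theorem \ref{RPF} gives $\la_j(0)=1$, $h_j^{(0)}\equiv\mathbf{1}$, and $\nu_j^{(0)}$ is the law of $X_j$, while the operator $R_0^{(j)}$ reduces to the Markov (conditional expectation) operator $Q_j g(x) = \bbE[g(X_{j+1})\mid X_j = x]$. Hence estimate \eqref{Exp Conv final.0.1.1} specializes to
\[
\bigl\|Q_n Q_{n+1}\cdots Q_{n+k-1}\,g_{n+k} - \bbE[g_{n+k}(X_{n+k})]\,\mathbf{1}\bigr\|_\infty \leq C_1\,\|g_{n+k}\|_\infty\,\del^k,
\]
which is precisely an exponential loss of memory for the transition operators.

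Next I would express the covariance using the tower property. Since $(Q_n \cdots Q_{n+k-1} g_{n+k})(X_n)$ is a version of $\bbE[g_{n+k}(X_{n+k})\mid X_n]$, we have
\[
\text{Cov}\bigl(g_n(X_n), g_{n+k}(X_{n+k})\bigr) = \bbE\!\left[g_n(X_n)\bigl((Q_n\cdots Q_{n+k-1}\,g_{n+k})(X_n) - \bbE[g_{n+k}(X_{n+k})]\bigr)\right].
\]
Bounding the first factor by $\|G\|_\infty$ and plugging in the previous display yields
\[
|\text{Cov}(g_n(X_n), g_{n+k}(X_{n+k}))| \leq C_1\,\|G\|_\infty^2\,\del^k,
\]
with $C_1$ and $\del$ inherited from Theorem \ref{RPF} and depending only on the ellipticity constant. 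Setting $A = C_1\|G\|_\infty^2$ completes the proof.

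As an alternative route one could bypass Theorem \ref{RPF} altogether and derive the estimate directly from the $\psi$-mixing property \eqref{psi}: approximating both factors uniformly by simple functions on partitions of their ranges, applying \eqref{psi} to each pair of atoms, and summing produces
\[
\bigl|\bbE[g_n(X_n)g_{n+k}(X_{n+k})] - \bbE[g_n(X_n)]\,\bbE[g_{n+k}(X_{n+k})]\bigr| \leq C_2\,\|G\|_\infty^2\,\del^k.
\]
There is no serious obstacle here: with Theorem \ref{RPF} (equivalently \eqref{psi}) already in hand the lemma is essentially immediate. The only delicate point, in either approach, is verifying that the constants do not depend on $n$ or on the particular $g_j$'s, and this uniformity is automatic from the quantitative form of \eqref{Exp Conv final.0.1.1} (respectively of \eqref{psi}).
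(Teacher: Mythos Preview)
Your proof is correct, but there is no proof in the paper to compare it to: the lemma is simply quoted from \cite{DS} (Proposition~1.11(2)) without argument. The remark immediately following the lemma in the paper does sketch your second route via \eqref{psi} and \eqref{PsiRep}, and in fact records the sharper bound $|\text{Cov}(g_n(X_n),g_{n+k}(X_{n+k}))|\leq C_1\|g_n(X_n)\|_{L^1}\|g_{n+k}(X_{n+k})\|_{L^1}\del^k$, so your $\psi$-mixing alternative is exactly in the spirit of what the authors had in mind.

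One small gap in your first route: Theorem~\ref{RPF} as stated says $\la_j(0)=1$ and $h_j^{(0)}=\mathbf{1}$, but does \emph{not} explicitly identify $\nu_{n+k}^{(0)}$ with the law $\mu_{n+k}$ of $X_{n+k}$, which you use when you replace $\nu_{n+k}^{(0)}(g_{n+k})$ by $\bbE[g_{n+k}(X_{n+k})]$. This is easy to repair: integrate the inequality $\|Q_n\cdots Q_{n+k-1}g_{n+k}-\nu_{n+k}^{(0)}(g_{n+k})\mathbf{1}\|_\infty\leq C_1\|g_{n+k}\|_\infty\del^k$ against $\mu_n$ to get $|\bbE[g_{n+k}(X_{n+k})]-\nu_{n+k}^{(0)}(g_{n+k})|\leq C_1\|g_{n+k}\|_\infty\del^k$, and then apply the triangle inequality (picking up a harmless factor of $2$). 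With that adjustment your argument goes through cleanly.
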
 
\begin{remark}
We note that by \eqref{psi} and  \eqref{PsiRep} we can get that 
$$
\left|\text{\rm Cov}\big(g_n(X_n), g_{n+k}(X_{n+k})\big)\right|\leq C_1\|g_n(X_n)\|_{L^1}\|g_{n+k}(X_{n+k})\|_{L^1}\del^k.
$$
However, this stronger estimate will not be used in this paper.
\end{remark}

\subsection{Mixing and transition densities}

\begin{lemma}\label{dens lemma}
We have 
$$
\bbP(X_{n+k}\in A|X_n=x)=\int p_{n}^{(k)}(x,y)d\mu_{n+k}(y)
$$
and the transition densities $p_{n}^{(k)}(x,y)$ take values in $[C^{-1},C]$.
\end{lemma}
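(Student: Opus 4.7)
The plan is to prove the following slightly stronger statement by induction on $k$: for every $n\geq 1$, $k\geq 1$, every measurable set $A$, and $\mu_n$-a.e.\ $x$,
\[
C^{-1}\bbP(X_{n+k}\in A)\leq \bbP(X_{n+k}\in A\,|\,X_n=x)\leq C\bbP(X_{n+k}\in A).
\]
Once this domination is established, $\bbP(X_{n+k}\in\cdot\,|\,X_n=x)\ll\mu_{n+k}$, so the Radon-Nikodym theorem furnishes a density $p_n^{(k)}(x,\cdot)$, and the two-sided bound pins its values to the interval $[C^{-1},C]$.

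The base case $k=1$ is exactly the ellipticity assumption \eqref{Doeblin} (together with the density representation stated immediately after \eqref{Doeblin}). For the inductive step, I would condition on $X_{n+1}$ and use the Markov property:
\[
\bbP(X_{n+k+1}\in A\,|\,X_n=x)=\int \bbP(X_{n+k+1}\in A\,|\,X_{n+1}=y)\,\bbP(X_{n+1}\in dy\,|\,X_n=x).
\]
Applying the inductive hypothesis (with $n+1$ in place of $n$ and $k$ in place of $k+1$) to the integrand bounds it above by $C\,\bbP(X_{n+k+1}\in A)$ for $\bbP(X_{n+1}\in\cdot\,|\,X_n=x)$-a.e.\ $y$; integrating against the probability measure $\bbP(X_{n+1}\in\cdot\,|\,X_n=x)$ then yields the upper bound $C\,\bbP(X_{n+k+1}\in A)$. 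The lower bound is symmetric. The key observation, and the reason the constants do not accumulate as $k$ grows, is that the intermediate factor being integrated is a probability measure rather than an $L^\infty$ density, so no extra factor of $C$ is incurred at each step.

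I do not anticipate any genuine obstacle here. The only routine point to mention is that one wants $p_n^{(k)}(x,y)$ to be jointly measurable in $(x,y)$; this follows from the standard disintegration / monotone class argument, using the countable generation of the state spaces assumed at the start of Section~\ref{ScMain}, and does not interact with the analytic content of the bound. One can alternatively define $p_n^{(k)}$ explicitly by iterated integration,
\[
p_n^{(k)}(x,y)=\int p_n(x,y_1)p_{n+1}(y_1,y_2)\cdots p_{n+k-1}(y_{k-1},y)\,d\mu_{n+1}(y_1)\cdots d\mu_{n+k-1}(y_{k-1}),
\]
which gives joint measurability for free, and then deduce the bound $[C^{-1},C]$ from the inductive estimate already proved (note that the naive $[C^{-k},C^k]$ bound obtained by taking absolute values inside the integral is much too weak, which is precisely why the probabilistic induction is needed).
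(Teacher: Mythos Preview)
Your proof is correct, and the core observation --- that averaging a quantity already confined to $[C^{-1}\mu_{n+k}(A),\,C\mu_{n+k}(A)]$ against a probability measure preserves the same bounds, so the constant $C$ does not compound in $k$ --- is exactly the point.

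The paper's argument is a touch slicker in that it avoids induction altogether: instead of conditioning on the \emph{first} intermediate step $X_{n+1}$ and invoking the inductive hypothesis for the remaining $k$ steps, the paper conditions on the \emph{last} intermediate step $X_{n+k-1}$, writes
\[
\bbP(X_{n+k}\in A\,|\,X_n)=\bbE\big[\bbP(X_{n+k}\in A\,|\,X_{n+k-1})\,\big|\,X_n\big],
\]
and then observes that the inner conditional probability already lies in $[C^{-1}\mu_{n+k}(A),\,C\mu_{n+k}(A)]$ by the one-step ellipticity \eqref{Doeblin}, so taking the outer conditional expectation preserves the bound. No induction is needed because the one-step assumption is applied only once, at the final transition. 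Your decomposition and the paper's are mirror images of one another; the paper's just short-circuits the recursion.
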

\begin{proof}
We have 
$$
\bbP(X_{n+k}\in A|X_n)=\bbE[\bbP(X_{n+k}\in A|X_{n+k-1},X_n)|X_n]=
\bbE[\bbP(X_{n+k}\in A|X_{n+k-1})|X_n].
$$
To complete the proof, note that 
$\DS
\bbP(X_{n+k}\in A|X_{n+k-1})\in[C^{-1},C],\,\,\text{a.s.}
$
\end{proof}

In the course of the proofs will also need the following result.
\begin{lemma}\label{psi dens lem}
$\DS
\text{ess-sup}_{x,y}\left|p_{n}^{(k)}(x,y)-1\right|\leq C_1\del^k.
$
\end{lemma}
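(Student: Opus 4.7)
The plan is to deduce the pointwise (a.e.) $L^\infty$ bound from the integrated $\psi$-mixing estimate \eqref{psi} by extending it from rectangles to arbitrary measurable sets in the product space. First, I would apply \eqref{psi} with $\bar X=\mathbf{1}_A(X_n)$ and $\bar Y=\mathbf{1}_B(X_{n+k})$. Using Lemma~\ref{dens lemma} to represent $\bbP(X_n\in A, X_{n+k}\in B)$ via $p_n^{(k)}$ and noting that $\bbP(X_n\in A)\bbP(X_{n+k}\in B)=(\mu_n\otimes\mu_{n+k})(A\times B)$, the $\psi$-mixing estimate becomes
\[
\left|\int_{A\times B}[p_n^{(k)}(x,y)-1]\,d(\mu_n\otimes\mu_{n+k})\right|
\le C_1\,\del^k\,(\mu_n\otimes\mu_{n+k})(A\times B).
\]

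Next, I would extend this rectangle estimate to all product-measurable sets. For any finite disjoint union $F=\bigsqcup_i A_i\times B_i$ of rectangles the bound is preserved by linearity and the triangle inequality; a standard monotone-class / $\pi$-$\lambda$ argument (which applies since both state spaces are countably generated) then yields
\[
\left|\int_{E}[p_n^{(k)}(x,y)-1]\,d(\mu_n\otimes\mu_{n+k})\right|
\le C_1\,\del^k\,(\mu_n\otimes\mu_{n+k})(E)
\]
for every $E$ in the product $\sigma$-algebra.

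Finally, I would apply the display to the super-level set $E_+=\{(x,y):p_n^{(k)}(x,y)-1>C_1\del^k\}$, and symmetrically to $E_-=\{(x,y):p_n^{(k)}(x,y)-1<-C_1\del^k\}$. If $(\mu_n\otimes\mu_{n+k})(E_+)>0$, then on $E_+$ the integrand would strictly exceed $C_1\del^k$, producing an integral strictly greater than $C_1\del^k\cdot(\mu_n\otimes\mu_{n+k})(E_+)$, contradicting the previous bound; thus $E_+$ is null, and likewise for $E_-$. This is exactly the claimed essential supremum bound.

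There is no real obstacle here: the rectangle case is essentially a rewriting of \eqref{psi} using Lemma~\ref{dens lemma}, and the extension step is entirely standard. The mildly delicate point is ensuring that the bound stays \emph{pointwise} in the density rather than merely controlling an integral, which is why the super-level-set argument in the last step, rather than a duality argument, is the cleanest route.
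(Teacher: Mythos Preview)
Your argument is correct and yields the same conclusion, but it proceeds differently from the paper. The paper fixes $x$, invokes the dual characterization of $\psi$-mixing from \cite{Brad} (stated here as \eqref{PsiRep}) to bound $\|\bbP(X_{n+k}\in A|X_n)-\bbP(X_{n+k}\in A)\|_{L^\infty}$ uniformly in $A$, and then uses countable generation of the state space to pull the supremum over $A$ inside the a.e.\ statement, identifying the result as an $L^\infty$ bound on the Radon--Nikodym derivative $y\mapsto p_n^{(k)}(x,y)-1$. You instead work in the product space: you pass from rectangles to finite disjoint unions (which form the algebra generated by rectangles), then to the full product $\sigma$-algebra, and finally read off the pointwise bound via super-level sets. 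Your route is more self-contained (it does not appeal to \eqref{PsiRep}), whereas the paper's route is shorter once that characterization is available and makes the role of countable generation explicit.

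One small caveat: of the two tools you name, it is the monotone class theorem that applies cleanly here, not the $\pi$--$\lambda$ theorem. The class of sets $E$ satisfying $\bigl|\int_E (p_n^{(k)}-1)\,d(\mu_n\otimes\mu_{n+k})\bigr|\le C_1\del^k(\mu_n\otimes\mu_{n+k})(E)$ is easily seen to be closed under monotone limits (dominated convergence, using that $p_n^{(k)}$ is bounded by Lemma~\ref{dens lemma}), and it contains the algebra of elementary sets by your disjoint-union step; but closure under complements, needed for a $\lambda$-system, is not immediate from the triangle inequality. Also, countable generation is not actually required for your extension step (it is needed in the paper's argument for a different reason). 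These are cosmetic points; the proof stands.
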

\begin{proof}
Let $x$ be fixed.
Let $\Gamma(A)$ be the singed measure given by 
$$
\Gamma(A)=\bbP(X_{n+k}\in A|X_n=x)-\bbP(X_{n+k}\in A).
$$
Then $y\to p_{n}^{(k)}(x,y)-1$ is the Radon-Nikodym derivative $d\Gamma/d\mu_{n+k}$. Let $(\Om,\cF,\bbP)$ be a probability space. 
Recall that by \cite[Ch.4]{Brad}, for every two sub-$\sigma$-algebras $\cG,\cH$ of  $\cF$,
\begin{equation}\label{PsiRep}
\psi(\cG,\cH):=\sup\left\{\left|\frac{\bbP(A\cap B)}{\bbP(A)\bbP(B)}-1\right|: A\in\cG, B\in\cH, \bbP(A)\bbP(B)>0\right\}
\end{equation}
$$
=\sup\left\{\|\bbE[h|\cG]-\bbE[h]\|_{L^\infty}: h\in L^1(\Om,\cH,\bbP), \|h\|_{L^1}\leq 1\right\}.
$$
Let $\cG=\sig\{X_n\}$ and $\cH=\sig\{X_{n+k}\}$. Then by condition \eqref{psi} we have 
$\psi(\cG,\cH)\leq C_1\del^k$. Hence, by applying \eqref{PsiRep} with the function
 $h=\bbI(X_{n+k}\in A)/\bbP(X_{n+k}\in A)$ we see that
$$
\left\|\bbP(X_{n+k}\in A|X_n)-\bbP(X_{n+k}\in A)\right\|_{L^\infty}\leq C_1\bbP(X_{n+k}\in A)\del^k.
$$
Since the state space $(\cX_{n+k},\cF_{n+k})$ of $X_{n+k}$ is countably generated we conclude that 
$$
\sup_{A\in\cF_{n+k}}\left(\bbP(X_{n+k}\in A)\right)^{-1}\left|\bbP(X_{n+k}\in A|X_{n}=x)-\bbP(X_{n+k}\in A)\right|\leq C_1\del^k,\,\,\,\, \mu_n-\text{a.s.}
$$
Hence $d\Gamma/d\mu_{n+k}$ is bounded by $C_1\del^k$.
\end{proof}

\subsection{Conditioning}
 Let $\cE=\cE_N$ be the $\sig$-algebra generated by $\{X_n: n\in\cB\}$ where $\cB$ 
is a subset of $\{1,\dots, N\}.$
\begin{lemma}\label{FiberQuantLemma}
(i) There is a constant $C_2\geq1 $ so that for any $n\not\in \cB$ 
 and all $\omega$ in the sample space
we have
\begin{equation}\label{VarRel}
C_2^{-1}V(Y_n)\leq V(Y_n|\cE)\leq C_2V(Y_n).
\end{equation}

(ii) For any $n$, let $q_n(m|\cE)$ be the second largest among $P(Y_n\equiv j\mod m|\cE)$, \\$j=0,1,\dots,  m-1$. 
Then there exists a constant $A\geq1 $ so that for any $n\in\bbN$ 
 and all $\omega$ in the sample space
we have
\begin{equation}\label{q-Rel}
A^{-1}q_n(m)\leq q_n(m|\cE)\leq Aq_n(m).
\end{equation}
\end{lemma}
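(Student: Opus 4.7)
The plan is to invoke the Markov property to reduce the conditioning on $\cE=\sigma(X_j : j\in\cB)$ to conditioning on only the two nearest members of $\cB$, and then apply Lemma~\ref{dens lemma} to obtain a uniform pointwise density bound that drives both estimates. Fix $n\notin\cB$ and set $j^-=\max\{j\in\cB : j<n\}$, $j^+=\min\{j\in\cB : j>n\}$ (with the obvious one-sided modifications when either set is empty). By the Markov property $\bbE[\cdot\mid\cE]$ coincides with $\bbE[\cdot\mid X_{j^-}, X_{j^+}]$ on bounded functions of $X_n$, and the joint density of $(X_{j^-},X_n,X_{j^+})$ with respect to $\mu_{j^-}\otimes\mu_n\otimes\mu_{j^+}$ equals $p_{j^-}^{(n-j^-)}(x_-,y)\,p_n^{(j^+-n)}(y,x_+)$. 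Dividing by its $y$-integral yields the conditional density of $X_n$ with respect to $\mu_n$,
\[
\pi(y\mid x_-, x_+) \;=\; \frac{p_{j^-}^{(n-j^-)}(x_-, y)\, p_n^{(j^+-n)}(y, x_+)}{\int p_{j^-}^{(n-j^-)}(x_-, y')\, p_n^{(j^+-n)}(y', x_+)\, d\mu_n(y')},
\]
and since every transition density lies in $[C^{-1}, C]$ by Lemma~\ref{dens lemma}, numerator and denominator both lie in $[C^{-2}, C^2]$, so $\pi(\cdot\mid x_-, x_+)\in[C^{-4}, C^4]$ uniformly in all arguments (the one-sided boundary cases yield even tighter bounds in $[C^{-1}, C]$, and the case $\cB=\emptyset$ gives $\pi\equiv 1$).

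For part (i), set $\bar y(\omega):=\bbE[Y_n\mid\cE](\omega)$. Combining $\pi\le C^4$ with the extremal property $V(Y_n)=\min_a\int(f_n(y)-a)^2\, d\mu_n(y)$ (used with $a=\bbE[Y_n]$) gives
\[
V(Y_n\mid\cE)(\omega)\;=\;\int (f_n(y)-\bar y(\omega))^2\, \pi(y\mid X_{j^-}(\omega), X_{j^+}(\omega))\, d\mu_n(y)\;\le\; C^4 V(Y_n),
\]
while $\pi\ge C^{-4}$ dually yields $V(Y_n)\le\int (f_n(y)-\bar y(\omega))^2\, d\mu_n(y)\le C^4 V(Y_n\mid\cE)(\omega)$. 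Thus $C_2=C^4$ works.

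For part (ii), integrating $\mathbf{1}(f_n(y)\equiv j\text{ mod } m)$ against $\pi\,d\mu_n$ and against $d\mu_n$ respectively gives $\tilde p_j(\omega):=P(Y_n\equiv j\text{ mod } m\mid\cE)(\omega)\in[C^{-4}p_j, C^4 p_j]$ for every $j$ and a.e.\ $\omega$, where $p_j:=P(Y_n\equiv j\text{ mod } m)$. The claim then follows from the min-max representation of the second largest element of a finite family,
\[
q_n(m)\;=\;\min_{j\in\{0,\dots,m-1\}}\,\max_{i\neq j} p_i, \qquad q_n(m\mid\cE)(\omega)\;=\;\min_j\,\max_{i\neq j}\tilde p_i(\omega),
\]
into which the pointwise bounds on $\tilde p_i$ can be substituted inside both the max and the min to obtain $A=C^4$. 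There is no substantial obstacle: the only step requiring a small amount of care is the Markov reduction of $\cE$-conditioning to two-sided conditioning at $j^-, j^+$, which is a standard consequence of the Markov property; thereafter the uniform density bound on $\pi$ does all the work.
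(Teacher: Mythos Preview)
Your argument for part (i) is essentially the same as the paper's: reduce the $\cE$-conditioning to the two nearest indices in $\cB$, obtain a two-sided bound on the bridge density $\pi$ with respect to $\mu_n$ via Lemma~\ref{dens lemma}, and then use the extremal characterization of variance. One small slip: in your displayed upper bound you write $V(Y_n\mid\cE)=\int(f_n-\bar y)^2\pi\,d\mu_n\le C^4 V(Y_n)$, but from $\pi\le C^4$ one only gets $\le C^4\int(f_n-\bar y)^2\,d\mu_n$, which is $\ge C^4 V(Y_n)$, not $\le$. The intended step is to first use the \emph{conditional} extremal property $V(Y_n\mid\cE)=\min_a\int(f_n-a)^2\pi\,d\mu_n\le\int(f_n-\bbE Y_n)^2\pi\,d\mu_n$ and only then apply $\pi\le C^4$; with that correction the argument is complete. (The paper handles the upper bound slightly differently, centering so that $\bbE Y_n=0$ and bounding $V(Y_n\mid\cE)\le\bbE[Y_n^2\mid\cE]\le C^3\bbE[Y_n^2]=C^3 V(Y_n)$; the constants differ only because you bound the normalizing integral by $[C^{-2},C^2]$ rather than recognizing it as $p_{j^-}^{(j^+-j^-)}(x_-,x_+)\in[C^{-1},C]$.)

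For part (ii) your route is genuinely different from the paper's. The paper proves a two-sided comparison $q(Z)/4\le V(Z)\le 8K^3 q(Z)$ valid for any bounded integer-valued $Z$, and then appeals to part (i) applied to $Z=Y_n\bmod m$; this has the advantage of tying $q$ and $V$ together once and for all. You instead transfer the density bound directly to each residue probability, $\tilde p_j\in[C^{-4}p_j,C^4 p_j]$, and then invoke the min--max identity $q=\min_j\max_{i\ne j}p_i$ for the second-largest value, into which monotone pointwise bounds can be inserted. Your argument is shorter and yields the sharper constant $A=C^4$ (independent of $K$), whereas the paper's detour through variance picks up a factor $32K^3$.
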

\begin{proof}
We first note that by considering iid copies of $X_1$ (which are also independent of $\{X_n$\})  we can always extend $\{X_n\}$ to a two sided uniformly elliptic Markov chain with the same ellipticity constant $C$.
 
Let us prove the first item.
It is clearly enough to prove it in the case when $\bbE(Y_n)=0$.
Now,  for every positive integers $n,k$ and $l$ we have
\begin{equation}\label{Bridge}
\bbP(X_{n}\in A|X_{n-l}=a,X_{n+k}=b)=\int_{A} p_n^{(n-l,n+k)}(y|a,b)d\mu_n(y)
\end{equation}
with 
$$
 p_n^{(n-l,n+k)}(y|a,b)=\frac{p_{n-l}^{(l)}(a,y)p_{n}^{(k)}(y,b)}{p_{n-l}^{(l+k)}(a,b)}
$$
and  $p_{m}^{(s)}$ is the transition density of $X_{m+s}$ given $X_m$.
Hence by Lemma \ref{dens lemma}, for every possible value of $Y_n=f_n(X_n)$ we have
\begin{equation}\label{UP}
C^{-3}\bbP(Y_n=x)\leq \bbP(f_{n}(X_n)=x|X_{n-l},X_{n+k})\leq C^3\bbP(Y_n=x).
\end{equation}
We note that when $n$ is smaller than the first index $n_1$ in $\cB$ then we only condition on the latter, but in this case we still get \eqref{UP} from \eqref{Bridge} by further conditioning on $X_{0}$.
We  conclude that
$$
V(Y_n|\cE)=
\sum_{x}\bbP(Y_n=x|\cE)\left(x-\bbE[Y_n|\cE]\right)^2
$$
$$
\geq C^{-3}\sum_{x}\bbP(Y_n=x)\left(x-\bbE[Y_n|\cE]\right)^2
\geq C^{-3}V(Y_n)
$$
since $\bbE(Y_n-a)^2\geq V(Y_n)$ for any $a\in\bbR$. On the other hand, using again \eqref{UP} we see that
\[
V(Y_n|\cE)\leq \bbE[Y_n^2|\cE]=\sum_{x}\bbP(Y_n=x|\cE)x^2\leq C^3\sum_{x}\bbP(Y_n=x)x^2=C^3V(Y_n)
\]
and   \eqref{VarRel} follows.
\vskip1cm

 To prove the second item 
we use  the fact that if $Z$ is an integer valued random variable with $\|Z\|_{L^\infty}\leq K$
then
\begin{equation}
\label{Q-V} \frac{q(Z)}4\leq V(Z)\leq 8 K^3 q(Z)
\end{equation}
where $q(Z)$ is the probability that $Z$ takes its second most likely value.
Indeed let $Z'$ and $Z''$ be independent copies of
$Z$, and let
$\bar i$ and $\hat i$ be the most likely and the second most likely 
values of $Z$. Then
$$ V(Z)=\frac12\bbE[(Z'-Z'')^2]=$$$$\frac{1}{2} \sum_{|i|, |j|\leq K} \bbP(Z=i)\bbP(Z=j) (i-j)^2\geq
\frac{1}{2} \bbP(Z=\hat i) \bbP(Z\neq \hat i)=\frac{q(Z)(1-q(Z))}{2}\geq \frac{q(Z)}4  $$
since $q(Z)\leq \frac{1}{2}.$ On the other hand, using the above formula for $V(Z)$ we get
$$  V(Z)\leq \frac{1}{2} \times (2K)^2 \times \bbP(Z'\neq \bar i \text{ or } Z''\neq \bar i)
\leq (2K)^2 \times \bbP(Z\neq \bar i)\leq (2K)^2 \times 2K q(Z).
$$ 
Applying \eqref{Q-V} with $Z=Y_n$ mod $m$ and using item (i) proves item (ii).
\end{proof}

\begin{lemma}[Conditional chains]\label{CondChain}
After conditioning on $\cE$, for almost every realization of $\cE$ the sequence $\{X_n: n\not\in\cE\}$ forms a uniformly elliptic Markov chain. 
More precisely, 
let us write $\cB=\{n_1<n_2<....<n_d\}$, where both $n_i$ and $d$ might also depend on $N$. For the sake of convenience, let us also set $n_0=0$ and $n_{d+1}=\infty$. Then, for almost every realization of $\cE$ we have the following:
\vskip0.2cm
(i)  The random variables $\{X_n: n_s<n<n_{s+1}\}$ are conditionally independent, namely they are independents with respect to $\bbP_\cE$,  where $\bbP_{\cE}(\cdot)$ denotes $\bbP(\cdot|\cE)$. 
\vskip0.2cm

(ii) If $n$ and $n+1$ belong to the same block $(n_s,n_{s+1})$ then  
$$
\bbP_\cE(X_{n+1}\in A|X_n=x)=\int p_{n,\cE}(x,y)d\mu_{n+1,\cE}(y).
$$
with $C^{-6}\leq p_{\cE,n}(x,y)\leq C^6$,
where $d\mu_{n+1,\cE}$ denotes the law of $X_{n+1}$ given $\cE$.
\end{lemma}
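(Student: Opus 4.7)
The plan is to derive both parts from the Markov property combined with the uniform multi-step density bounds of Lemma \ref{dens lemma}, together with the bridge formula \eqref{Bridge} that was already exploited in the proof of Lemma \ref{FiberQuantLemma}. Part (i) is a standard consequence of the Markov property, and part (ii) reduces to a ratio of two explicit conditional densities in which most factors cancel, leaving only a small bounded number of multi-step transition densities, each uniformly controlled by Lemma \ref{dens lemma}.

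For part (i), I would fix a realization $(x_{n_1},\ldots,x_{n_d})$ of the variables indexed by $\cB$ and write the conditional joint density of $\{X_n : n \notin \cB\}$ as a product over the blocks $(n_s, n_{s+1})$. Using the time-inhomogeneous Markov property and the chain rule repeatedly, the conditional joint law factors as a product of block-conditional laws, each depending only on the boundary values $x_{n_s}$ and $x_{n_{s+1}}$ (with only one of these for the extreme blocks, after extending the chain to a two-sided one as done in the proof of Lemma \ref{FiberQuantLemma}). Since the factors act on disjoint sets of coordinates, conditional independence across blocks is immediate.

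For part (ii), take $n_s < n < n+1 < n_{s+1}$ with $s\in\{1,\ldots,d-1\}$, and write $a=X_{n_s}$, $b=X_{n_{s+1}}$. By the Markov property, $\bbP_\cE(X_{n+1}\in A\,|\,X_n=x)$ equals $\bbP(X_{n+1}\in A\,|\,X_n=x,X_{n_{s+1}}=b)$; the bridge formula \eqref{Bridge} gives this the explicit density
$$
D_1(y)=\frac{p_n(x,y)\,p_{n+1}^{(n_{s+1}-n-1)}(y,b)}{p_n^{(n_{s+1}-n)}(x,b)}
$$
with respect to $\mu_{n+1}$. Applying \eqref{Bridge} a second time, now with the conditioning $(X_{n_s}=a,X_{n_{s+1}}=b)$, the law $\mu_{n+1,\cE}$ has density
$$
D_2(y)=\frac{p_{n_s}^{(n+1-n_s)}(a,y)\,p_{n+1}^{(n_{s+1}-n-1)}(y,b)}{p_{n_s}^{(n_{s+1}-n_s)}(a,b)}
$$
with respect to $\mu_{n+1}$. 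The key observation is that in the ratio $p_{n,\cE}(x,y)=D_1(y)/D_2(y)$ the awkward factor $p_{n+1}^{(n_{s+1}-n-1)}(y,b)$ cancels, leaving a product and quotient of only four factors of the form $p_*^{(*)}$, each lying in $[C^{-1},C]$ by Lemma \ref{dens lemma}. This immediately yields $C^{-4}\le p_{n,\cE}(x,y)\le C^4$, which is within the claimed $C^{\pm 6}$.

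The boundary blocks ($s=0$ or $s=d$) are handled by the same computation but with strictly fewer factors, since one of the two conditioning points is absent; one obtains the same type of uniform bound with an even smaller exponent. The only real point to be careful about is carrying out this cancellation correctly in all configurations and verifying that the normalizing denominators (for instance the normalizer of $p_{n+1}^{(n_1-n-1)}(\cdot,b)$ on the first block) stay in $[C^{-1},C]$; everything else is a direct application of Lemma \ref{dens lemma}.
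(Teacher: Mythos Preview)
Your proposal is correct and follows essentially the same route as the paper: part (i) via the Markov property, and part (ii) via the bridge formula \eqref{Bridge} combined with the uniform density bounds of Lemma~\ref{dens lemma}, expressing $p_{n,\cE}$ as the ratio of the conditional transition density $D_1$ (with respect to $\mu_{n+1}$) to the Radon--Nikodym derivative $D_2=d\mu_{n+1,\cE}/d\mu_{n+1}$. The only difference is that the paper bounds $D_1$ and $D_2$ separately by $C^{\pm3}$ and multiplies to get $C^{\pm6}$, whereas you observe the cancellation of the common factor $p_{n+1}^{(n_{s+1}-n-1)}(y,b)$ in $D_1/D_2$ and thereby obtain the sharper bound $C^{\pm4}$.
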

\begin{proof}
The first part follows because $\{X_n\}$ is a Markov chain, and it does not require ellipticity.

To prove the second part, notice that by \eqref{Bridge} together with 
 Lemma \ref{dens lemma} the conditional law of $X_n$ is equivalent to the law of $X_n$, and the Radon-Nikodym derivative is bounded above by $C^{3}$ and  below by $C^{-3}$. Note that if $n<n_1$ then we can still use \eqref{Bridge} by taking $n-l=0$ and setting $X_0$ to be an independent copy of $X_1$ which is independent of $\{X_n\}$.
 
Next,  since $\{X_n: n_s<n<n_{s+1}\}$ are conditionally independent  it is enough to show that each $\{X_n: n_s<n<n_{s+1}\}$ forms  uniformly elliptic Markov chain after conditioning by $\cE$. To show that, let $n$ satisfy that $n_{s}<n<n+1<n_{s+1}$ for some $s$. If $0<n_s$ and $n_{s+1}<\infty$ then
$$
\bbP_{\cE}(X_{n+1}\in A|X_n)=\bbP(X_{n+1}\in A|X_{n},X_{n_{s+1}})
$$
and so
it follows from \eqref{Bridge} that 
$$
\bbP_{\cE}(X_{n+1}\in A|X_n)=\int_{A} p_n^{(n_s,n_{s+1})}(y|X_{n},X_{n_{s+1}})d\mu_{n+1}(y)
$$
where the densities are bounded above by $C^3$ and below by $C^{-3}$. Now the result follows since $\mu_{n+1}$ and $\mu_{n+1,\cE}$ are equivalent with Radon-Nikodym derivatives bounded between $C^{-3}$ and  and $C^{3}$. The proof when $n_s=1$ is similar, and the case $n_{s+1}=\infty$ reduces to the unconditioned chain.
\end{proof}

\section{Classical estimates.}
Recall Definition \ref{DefResP}.
Let $\cR=\{t_j\}$ be the set of all resonant points.
Divide $\bbT$ into intervals $I_j$ of small size $\delta$
such that each interval contains at most one resonant point and this point is strictly inside
$I_j.$ We call an interval resonant if it contains a resonant point inside. Then
\begin{equation}\label{SplitInt}
2\pi \bbP(S_N=k)=\sum_{j}\int_{I_j} e^{-itk}\bbE(e^{it S_N})dt.
\end{equation}
 In the case of sums of independent identically distributed
integer valued random variables the Edgeworth expansion comes from the expansion 
of the characteristic function near zero while the other intervals give negligible 
contributions. In this section we obtain a similar estimates for the integer valued additive
functionals of uniformly elliptic Markov chains. However, in contrast to the iid case,
in order to be able to disregard the contribution of an interval $I_j$ we need to assume that
this interval is either non-resonant, or it is resonant but the value of $M_N(m)$ is large
(where $m$ is the denominator of the corresponding resonant point).

\subsection{The contribution a neighborhood of $0$}
In this section we will estimate the integral $\int_{I_j} e^{-itk}\bbE(e^{it S_N})dt$ when $t_j=0$. Namely, we will expand the integral 
$$
\int_{-\del}^\del \bbE(e^{it S_N})dt=\sig_N^{-1}\int_{-\del\sig_N}^{\del \sig_N}e^{it\bbE[S_N]/\sig_N}\bbE(e^{it(S_N-\bbE[S_N])/\sig_N})dt
$$
for a sufficiently small $\del=\del_r$. First, by Lemma \ref{L}, if $\del$ is small enough then 
$$
\int_{-\del\sig_N}^{\del \sig_N}e^{it\bbE[S_N]}\bbE(e^{it(S_N-\bbE[S_N])/\sig_N})dt=
\int_{-\infty}^{\infty}e^{it\bbE[S_N]/\sig_N}e^{-t^2/2}\left(1+Q_{r,N}(t)\right)dt+o(\sig_N^{-r-1}).
$$
Second, recall that  for every real $\al$ we have
$$
\int_{-\infty}^{\infty} e^{-i\al h}e^{-h^2/2}h^k dh=(-1)^kH_k(\al)\varphi(\al).
$$
where $H_k$ is the $k$-th Hermite polynomial. Applying this formula with $\al=-\bbE[S_N]/\sig_N$ and taking into account \eqref{Q} and \eqref{P} 
we get the following result.
\begin{proposition}\label{0prop}
There are  polynomials $P_{0,b,N}$ with uniformly bounded coefficients so that for $t_j=0$, for every $r\geq1$ we have the following: if the length of the resonant interval $I_j$ around $0$ is small enough then 
$$
\int_{I_j} e^{-itk}\bbE(e^{it S_N})dt=\sum_{b=1}^r \frac{P_{a, b, N} ((k-a_N)/\sigma_N)}{\sigma_N^b}
\fg((k-a_N)/\sigma_N)+o(\sigma_N^{-r}).
$$
\end{proposition}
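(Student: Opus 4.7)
My plan is to essentially follow the outline already laid out in the text preceding the proposition: perform a change of variables to rescale the integral, invoke Lemma~\ref{L} to replace the characteristic function by its Gaussian $\times$ polynomial approximant, and then evaluate the resulting Fourier integrals explicitly via Hermite polynomials.

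First I will make the substitution $u = t \sigma_N$ in the integral, which gives
$$
\int_{I_j} e^{-itk}\bbE[e^{itS_N}]\,dt = \sigma_N^{-1}\int_{-\del\sigma_N}^{\del\sigma_N} e^{-iu(k-a_N)/\sigma_N}\bbE[e^{iuW_N}]\,du,
$$
where $W_N = (S_N - a_N)/\sigma_N$ and I have split $e^{iuS_N/\sigma_N}$ as $e^{iua_N/\sigma_N}e^{iuW_N}$. I assume $\delta = \delta_r$ is small enough that Lemma~\ref{L} applies. Then I would replace $\bbE[e^{iuW_N}]$ with $e^{-u^2/2}(1+Q_{r,N}(u))$, incurring an error bounded by
$$
C\sigma_N^{-1}\int_{-\del\sigma_N}^{\del\sigma_N} e^{-cu^2}\sigma_N^{-(r+1)}\max(|u|, |u|^{(r+3)(r+2)})\,du = O(\sigma_N^{-(r+2)}),
$$
which is $o(\sigma_N^{-r})$.

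Next I would extend the range of integration from $[-\delta\sigma_N, \delta\sigma_N]$ to all of $\bbR$. Since the integrand contains the factor $e^{-u^2/2}$ times a polynomial, the tails contribute an error of order $e^{-c\sigma_N^2}$, hence negligible. This leaves me with
$$
\sigma_N^{-1}\int_{-\infty}^{\infty} e^{-i\alpha u}\, e^{-u^2/2}(1+Q_{r,N}(u))\,du, \quad \alpha = (k-a_N)/\sigma_N.
$$
Expanding $Q_{r,N}(u) = \sum_{b=1}^{r}\sigma_N^{-b}P_{b,N}(u)$ from \eqref{Q}, each surviving integral reduces to $\int_{-\infty}^{\infty} e^{-i\alpha u}e^{-u^2/2}u^m du$, which by the quoted Hermite polynomial identity equals (up to a factor of $\sqrt{2\pi}$) $(-1)^m H_m(\alpha)\fg(\alpha)$. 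Collecting the contributions from $P_{b,N}$ for each $b\in\{1,\dots,r\}$ yields, after the prefactor $\sigma_N^{-1}$ is distributed, a sum of the form $\sum_{b=1}^{r}\sigma_N^{-b}P_{0,b,N}(\alpha)\fg(\alpha)$ plus the leading Gaussian term; this defines the polynomials $P_{0,b,N}$.

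The only real bookkeeping issue is verifying that the coefficients of $P_{0,b,N}$ are uniformly bounded in $N$. This follows from the identity $\sum_j jk_j = b$ for $\bar k\in A_b$ in \eqref{P}, which ensures that the products of scaled derivatives $\sigma_N^{-2}\Lambda_N^{(j+2)}(0)$ combine with the explicit $\sigma_N^{-b}$ in \eqref{Q} to produce $O(1)$ coefficients once the derivative bounds $|\Lambda_N^{(j)}(0)|\le C_j \sigma_N^{-(j-2)}$ from \cite{DH BE} are inserted. I do not expect a substantial obstacle here; the calculation is a routine Hermite-polynomial Fourier inversion combined with the uniform derivative estimates already established.
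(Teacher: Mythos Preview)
Your proposal is correct and follows essentially the same approach as the paper: rescale, apply Lemma~\ref{L}, extend the integration to $\bbR$, and invert term by term using the Hermite identity. Your write-up is in fact slightly more careful than the paper's exposition, since you correctly carry the factor $e^{-itk}$ through the change of variables to obtain $\alpha=(k-a_N)/\sigma_N$ and you spell out the uniform boundedness of the coefficients of $P_{0,b,N}$.
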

The above Proposition shows that the contribution of the neighborhood of $0$ corresponds to the polynomials $P_{0,b,N}$ and the choice $a=0$ (in the notations of Theorem~\ref{MainThm}).

\subsection{The negligible contribution: non-resonant intervals and resonant points with  $M_N(m)\geq R\ln \sig_N$ and the proof of Theorem \ref{ThEdgeMN}}\label{Sec4}
 For the sake of simplicity, 
assume that $\DS \sum_{n=1}^{[N/2]}q_{2n}(m)\geq \frac{R\ln\sig_N}2$ (otherwise we will work with odd indexes instead). Let us condition on  $X_1,X_3,X_5,\dots$. Then 
$X_2,X_4,X_6,\dots$  are independent after such a conditioning. Moreover,  by 
Lemma~\ref{FiberQuantLemma}
the ratio between $q_{2n}(m)$  and their conditioned versions is uniformly bounded and bounded away from $0$. Therefore, we can assume that, after the conditioning, we still have $\DS \sum_{n=1}^{[n/2]}q_{2n}(m)\geq R_0\ln\sig_N$ with $R_0$ large enough. This reduces the problem to the case of independent variables which was considered 
in \cite[Lemma 3.4]{DH} and it shows that the contribution of the integrals over such resonant intervals is $o(\sig_N^{-r})$.
Note also that similar arguments show that the contribution coming from non-resonant intervals is $O(e^{-cV_N})$ for some $c>0$. Indeed,  we assume that the sums of the variances of $X_{2n},n\leq N/2$ is lager than the sum of corresponding sum along the odd indexes, and then condition on the odd indexes. Now we can apply \cite[Lemma 3.3]{DH}.

We note that it is immediate from the formulation of Theorem \ref{ThEdgeMN} that it is enough to prove that the usual Edgeworth expansions hold (not in a conditionally stable way) since after conditioning by a finite number of elements $q_n(m)$ can only change by a multiplicative constant,  see Lemma \ref{FiberQuantLemma}(ii). Hence, if $M_N(m)\geq R\ln \sig_N$ for large enough $R$ and all denominators $m$ of nonzero resonant points, then the conditionally stable Edgeworth expansions of order $r$ hold true, and the proof of Theorem \ref{ThEdgeMN} is complete.

\section{Contribution of nonzero resonant intervals with $M_N(m)\leq R\ln\sig_N$}
In this Section we prove Theorem \ref{MainThm}.

Let $t_j=2\pi l/m$ be a nonzero resonant point so that $\DS M_N(m)=\sum_{n=1}^N q_n(m)\leq R\ln V_N$. Let us fix some $\bar\eps>0$, and let $N_0=N_0(t_j,N)$ be the number of indexes $n$ between $1$ to $N$ so that $q_n(m)\geq \bar\epsilon$. Then $N_0\leq \frac{R\ln V_N}{\bar\epsilon}$. Let us denote the latter indexes by $n_1<n_2<\dots<n_{N_0}$ and set $\cB=\cB_N=\{n_1,...,n_{N_0}\}$. 
Let $\cE=\cE_N$ be the $\sig$-algebra generated by $\{X_n: n\in\cB\}$. 

\subsection{More on conditioning}

\begin{lemma}\label{L1}
There is a constant $C>0$ so that
\begin{equation}\label{ExpDiff}
|\bbE(S_N|\cE)-\bbE(S_N)|\leq CN_0\leq \frac{CR\ln V_N}{\bar\eps}
\end{equation}
and
\begin{equation}\label{VarDiff}
|\text{Var}(S_N|\cE)-V_N|\leq C\ln^2 V_N.
\end{equation}
\end{lemma}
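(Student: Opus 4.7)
The plan is to split $S_N=A+B$ where $A=\sum_{n\in\cB}Y_n$ is $\cE$-measurable and $B=\sum_{n\notin\cB}Y_n$, and to write $W_s=\sum_{n_s<n<n_{s+1}}Y_n$ for the $s$-th conditional block sum. For part (i), the $\cE$-measurable piece contributes $|A-\bbE A|\le 2KN_0$, while for $B$ the Markov property reduces $\bbE[Y_n|\cE]$ to $\bbE[Y_n|X_{n_s},X_{n_{s+1}}]$ with $n_s<n<n_{s+1}$. Applying Lemma~\ref{psi dens lem} on each side of $n$ yields $|\bbE[Y_n|\cE]-\bbE[Y_n]|\le CK(\del^{n-n_s}+\del^{n_{s+1}-n})$, and summing geometric tails gives $O(1)$ per block, hence $O(N_0)$ overall; boundary cases (before $n_1$ or after $n_{N_0}$) follow by the same estimate with a single endpoint. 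The same argument applied block-wise yields $|\bbE[W_s|\cE]-\bbE W_s|\le C$.

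For part (ii), since $A$ is $\cE$-measurable, $\text{Var}(S_N|\cE)=\text{Var}(B|\cE)$. The decomposition $V_N=\text{Var}(A)+\text{Var}(B)+2\text{Cov}(A,B)$ combined with Lemma~\ref{MomEst}(i) (giving $\text{Var}(A)=O(N_0)$) and Lemma~\ref{DEC} (giving $|\text{Cov}(A,B)|=O(N_0)$ by summing the exponential decay of $\text{Cov}(Y_i,Y_j)$) reduces the task to showing $|\text{Var}(B|\cE)-\text{Var}(B)|=O(\ln^2 V_N)$. Setting $\tilde B=B-\bbE B$, I would split
$$\text{Var}(B|\cE)-\text{Var}(B)=\bigl(\bbE[\tilde B^2|\cE]-\bbE[\tilde B^2]\bigr)-\bigl(\bbE[\tilde B|\cE]\bigr)^2.$$
The second term is at most $(CN_0)^2=O(\ln^2 V_N)$ by part (i) applied to $B$. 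The first I expand as $\sum_{i,j\notin\cB}(\bbE[\tilde Y_i\tilde Y_j|\cE]-\bbE[\tilde Y_i\tilde Y_j])$ and split according to whether $i,j$ lie in the same or different conditional blocks (Lemma~\ref{CondChain}).

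For different-block pairs, conditional independence gives $\bbE[\tilde Y_i\tilde Y_j|\cE]=\bbE[\tilde Y_i|\cE]\bbE[\tilde Y_j|\cE]$, whose sum over $i\in B_s,\,j\in B_t,\,s\ne t$ equals $\sum_{s\ne t}G_sG_t$ with $G_s=\bbE[W_s|\cE]-\bbE W_s$; since $|G_s|\le C$ by part~(i), $|\sum_{s\ne t}G_sG_t|\le(\sum_s|G_s|)^2=O(N_0^2)$, while the unconditional counterpart $|\sum_{s\ne t}\text{Cov}(W_s,W_t)|=O(N_0)$ via $|\text{Cov}(W_s,W_t)|\le C\del^{g_{st}}$ with $g_{st}=n_t-n_{s+1}$. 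For same-block pairs $n_s<i<j<n_{s+1}$, the Markov bridge representation of the joint density of $(X_i,X_j)$ given $(X_{n_s},X_{n_{s+1}})$, combined with Lemma~\ref{psi dens lem}, yields
$$|\bbE[\tilde Y_i\tilde Y_j|\cE]-\bbE[\tilde Y_i\tilde Y_j]|\le C(\del^{\ell_1+\ell_3}+\del^{\ell_2+\ell_3}+\del^{\ell_1+\ell_2}),$$
where $\ell_1=i-n_s,\ \ell_2=n_{s+1}-j,\ \ell_3=j-i$; each exponent factors into two independent geometric series summing to $O(1)$ per block and $O(N_0)$ overall. The main obstacle is precisely this bridge estimate: the naive bound $C(\del^{\ell_1}+\del^{\ell_2})$ obtained by merely propagating the boundary information through the chain yields $O(L_s)$ per block and hence an unacceptable $O(N)$ total, so the factored form must crucially also exploit the correlation decay $|\bbE[\tilde Y_j|X_i=x]|\le C\del^{j-i}$ from $\psi$-mixing when integrating the bridge correction against $\tilde f_i\tilde f_j$.
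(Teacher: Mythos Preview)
Your proof of \eqref{ExpDiff} is essentially the paper's argument. For \eqref{VarDiff} your route is correct but genuinely different. The paper introduces an auxiliary partition of $\{1,\dots,N\}$ into blocks $B_k$ of bounded (conditional and unconditional) variance, with $a_N\asymp V_N$ blocks, then discards those $B_k$ within distance $A\ln a_N$ of $\cB$; the $O(\ln^2 V_N)$ error comes entirely from the $O(N_0\ln V_N)$ discarded blocks, while for the remaining ``far'' pairs the bridge correction is uniformly $O(\del^{A\ln a_N})=O(a_N^{-2})$, so the surviving sum is $O(1)$. You instead avoid both the auxiliary partition and the buffer trick by proving the sharper pairwise bound
\[
|\bbE[\tilde Y_i\tilde Y_j|\cE]-\bbE[\tilde Y_i\tilde Y_j]|\le C(\del^{\ell_1+\ell_3}+\del^{\ell_2+\ell_3}+\del^{\ell_1+\ell_2}),
\]
which indeed follows by writing the bridge factor as $1+e_1(x_i)+e_2(x_j)+O(\del^{\ell_1+\ell_2})$ (via Lemma~\ref{psi dens lem}) and then using $\int\tilde f_j(x_j)p^{(\ell_3)}(x_i,x_j)\,d\mu_j(x_j)=O(\del^{\ell_3})$ (and symmetrically in $i$) to gain the extra $\del^{\ell_3}$ when integrating the $e_1,e_2$ terms; the three resulting double geometric sums are each $O(1)$ per block, as you note. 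The paper's buffer argument is more robust---it only needs the crude bridge bound and bounded block variance---while yours is more direct and in fact gives the slightly better $O(N_0)$ on the same-block contribution. One minor point: your claim $|\text{Cov}(W_s,W_t)|\le C\del^{g_{st}}$ is correct, but it is obtained by summing the individual bounds $|\text{Cov}(Y_i,Y_j)|\le A\del^{|i-j|}$ of Lemma~\ref{DEC} over $i\in(n_s,n_{s+1})$, $j\in(n_t,n_{t+1})$ and factoring the double geometric series, not by applying $\psi$-mixing directly to $W_s,W_t$ (which would pick up an unwanted $\|W_s\|_{L^1}\|W_t\|_{L^1}$).
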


\begin{proof}
In order to prove (\ref{ExpDiff}), 
let us take $n$ so that $n_{i}<n<n_{i+1}$ for some $i$. Then 
\begin{equation}\label{E}
\bbE(Y_n|\cE)-\bbE(Y_n)=\int\left(p_n^{(n_i,n_{i+1})}(x|X_{n_i},X_{n_{i+1}})-1\right)f_n(x)  d\mu_n(x) 
\end{equation}
where $p_n^{(n_i,n_{i+1})}(x|X_{n_i},X_{n_{i+1}})$ is defined by 
$$
p_n^{(n_i,n_{i+1})}(x|X_{n_i},X_{n_{i+1}})=\frac{p_{n_i}^{(n-n_i)}(X_{n_i},x)p_{n}^{(n_{i+1}-n_i)}(x,X_{n_{i+1}})}{p_{n_{i-1}}^{(n_{i+1}-n_i)}(X_{n_i},X_{n_{i+1}})}.
$$

Now, by Lemmas \ref{dens lemma} and \ref{psi dens lem} we have
$$
\left|p_{n_{i}}^{(n-n_i)}(X_{n_i},x)-1\right|\leq C_1\del^{n-n_i}, \,\,\,
\left|p_{n_{i}}^{(n_{i+1}-n)}(x,X_{n_{i+1}})-1\right|\leq C_1\del^{n_{i+1}-n},
$$
$$
C^{-1}\leq p_{n_{i-1}}^{(n_{i+1}-n_i)}(X_{n_i},X_{n_{i+1}})\leq C,\,\,\,
\left| p_{n_{i-1}}^{(n_{i+1}-n_i)}(X_{n_i},X_{n_{i+1}})-1\right|\leq C_1\del^{n_{i+1}-n_i}.
$$
We thus conclude that 
$$
\left|p_n^{(n_i,n_{i+1})}(x|X_{n_i},X_{n_{i+1}})-1\right|\leq C_2\del^{\min(n-n_i, n_{i+1}-n)}
$$
for some constant $C_2$.
Therefore,
\begin{equation}\label{Expec1}
\left|\bbE(Y_n|\cE)-\bbE(Y_n)\right|\leq C_2\bbE[|Y_n|]\del^{\min(n-n_i, n_{i+1}-n)}.
\end{equation}
Similarly when $n<n_1$ or $n>n_{N_0}$ we have 
\[
\left|\bbE(Y_n|\cE)-\bbE(Y_n)\right|\leq C\bbE[|X_n|]\del^{d(n,\cB_N)}
\]
where $\cB_N=\{n_1,n_2,...,n_{N_0}\}$ and $d(n,B)=\min\{|n-b|: b\in B\}$ for any $n$ and a finite set $B$, and we set $n_0=0$ and $n_{N_0+1}=N$. It follows that there exists a constant $C>0$ so that for any $i$,
\begin{equation}\label{Expectation}
\sum_{n_i<n\leq n_{i+1}}\left|\bbE(Y_n|\cE)-\bbE(Y_n)\right|\leq C.
\end{equation}
Therefore, 
\[
|\bbE(S_N|\cE)-\bbE(S_N)|\leq \sum_{i=0}^{N_0}\sum_{n_i<n\leq n_{i+1}}\left|\bbE(Y_n|\cE)-\bbE(Y_n)\right|\leq C(N_0+1).
\]
\smallskip

Now we prove (\ref{VarDiff}). First, let $A_1<A_2$ be two sufficiently large numbers. Using (\ref{VarRel}) and 
that$\{X_n\}$ also satisfies (\ref{Doeblin}) given $\cE$ with a deterministic constant (see Lemma \ref{CondChain}),  
we can divide $\{1,...,N\}$ into blocks $B_1,B_2,...,B_{a_N}$, $a_N\asymp V_N$ so that for any $k$ both $V(S_{B_k})$ and $V(S_{B_k}|\cE)$ lie between $A_1$ and $A_2$, where 
$\DS S_B=\sum_{n\in B}Y_n$ for any finite set 
$B\subset\bbN$. Let us put $Z_k=S_{B_k}$.

Next, let us define $\tilde S_N=\sum_{k\in\cN_N}Z_k$ where $\cN_N$
is the set of indexes $1\leq k\leq a_N$ so that the distance between $B_k$ and $\cB_N$ is at least $A\ln a_N$, where $A$ is a  constant so large that $a_{N}^2\del^{A\ln a_N}\leq 1$. We claim first that 
\begin{equation}\label{V first}
V_N=V(\tilde S_N)+O(\ln^2  a_N) \,\text{ and }\,\|V(S_N|\cE)-V(\tilde S_N|\cE)\|_{L^\infty}=O(\ln^2 a_N).
\end{equation}
Indeed,   by \eqref{psi}   we have $|\text{Cov}(Z_{n+k},Z_{n})|\leq C\del^k$
and also $|\text{Cov}(Z_{n+k},Z_{n}|\cE)|\leq C\del^k$, where $C>0$ is some constant. As a consequence, for any $k$ we have
\begin{equation}
\label{RemoveOne}
|V(S_N)-V(S_N-Z_k)|\leq C\,\text{ and }\,|V(S_N|\cE)-V(S_N-Z_k|\cE)|\leq C.
\end{equation}
Now (\ref{V first}) follows by a  repeated application  of \eqref{RemoveOne}
 taking into account that $\tilde S_N$ was obtained by removing at most $O(\ln^2 a_N)$ individual summands.

Next, let us show that there exists $C_1>0$ so that
\begin{equation}\label{VarDiff-Temp}
|Var(\tilde S_N|\cE)-V(\tilde S_N)|\leq C_1.
\end{equation}
Indeed, let $1\leq k_1\leq k_2\leq a_N$ be so that $d(B_{k_i},\cB_N)\geq A\ln a_N$ for $i=1,2$, where $d(A,B)$ denotes the distance between two finite sets.  Then there are $s_1,s_2$ so that $B_{k_i}$ is contained in $(n_{s_i},n_{s_i+1})$,  for $i=1,2.$
Let us first assume that $s_1\not=s_2$. Then $Z_{k_1}$ and $Z_{k_2}$ are independent given $\cE$ and 
\[
|\text{Cov}(Z_{k_1},Z_{k_2})|\leq C\del^{A a_N}.
\]
Therefore, the contribution of such pairs to the left hand side of (\ref{VarDiff-Temp}) is $O(1)$.

Next, let us assume that $B_{k_1},B_{k_2}\subset(n_{s},n_{s+1})$ for some $s$.
We will first show that, with  $W_{k_i}=\{f_n(X_n):\,n\in B_{k_i}\}$ \,$i=1,2$, for any possible values $u$ and $v$ of $W_{k_1}$ and $W_{k_2}$, respectively,  
 for all possible values $x$ and $y$ of $X_{n_s}$ and $X_{n_{s+1}}$ respectively,
we have
\begin{eqnarray}\label{Prob Diff}
P(W_{k_1}=v, W_{k_2}=u| X_{n_s}=x, X_{n_{s+1}}=y)-P(W_{k_1}=v, W_{k_2}=u)\\
=P(W_{k_1}=v, W_{k_2}=u)
O(\del^{A\ln a_N}).\nonumber
\end{eqnarray}
Assuming that \eqref{Prob Diff} holds we get\footnote{It is enough to prove this when $\bbE(Y_{k_i})\!=\!0$, $i\!=\!1,2$. In this case 
 \eqref{Cov-CondCov} is a direct consequence of \eqref{Prob Diff}.} that 
\begin{equation}
\label{Cov-CondCov}
\left|\text{Cov}(Y_{k_1},Y_{k_2})-\text{Cov}(Y_{k_1},Y_{k_2}|\cE)\right|\leq C\sqrt{V(Y_{k_1})V(Y_{k_2})}\del^{A\ln a_N}=O(a_N^{-2})
\end{equation}
which yields that the contribution of such pairs $k_1$ and $k_2$ to the left hand side of \eqref{VarDiff-Temp} is also $O(1)$. 

Now let us prove \eqref{Prob Diff}. Let $X(B_{k_i})=\{X_n:\,n\in B_{k_i}\}$ and let 
$$
\Gamma_1=\left\{X(B_{k_1}): 
 f_j(X_j)=v_j\right\},  \quad \Gamma_2=\left\{X(B_{k_2}):  f_j(X_j)=u_j\right\}.
$$
For any $n$, $k$ and $x=(x_0,...,x_{k})$ let us write
\[
p^{n,n+k}(x)=\prod_{j=1}^{k}p_{n+j-1}(x_{j-1},x_j)
\]
where we recall that $y\to p_{n}(x,y)$ is the transition density of $X_{n+1}$ given $X_n=x$. 
Denote
$$
\bar x=(x_j)_{j=n_s+1}^{n_{s+1}}, \quad
\bar x^{(i)}=(x_j)_{j\in B_{k_i}},\,\,i=1,2
$$
and let 
$$
D=\{\bar x: \bar x^{(i)}\in\Gamma_i, i=1,2\}.
$$
Let us assume that $k_1\leq k_2$ and write $B_{k_i}=[\al_i,\be_i]\cap\bbN$. 
Then the difference on the left hand side of (\ref{Prob Diff}) can be written as
$$
\int_{D}p^{\al_1,\beta_1}(\bar x^{(1)})p_{\beta_1}^{(\al_2-\beta_1)}(x_{\al_2})p^{\al_2,\beta_2}(\bar x^{(2)})\left(
\frac{p_{n_s}^{(\al_1-n_s)}(x,x_{\al_1})p_{\beta_2}^{(n_{s+1}-\beta_2)}(x_{\beta_2},y)}{p_{n_s}^{(n_{s+1}-n_s)}(x,y)}-1\right)\prod_{t=\al_1}^{\beta_2}d\mu_{t}(x_{t})
$$
where we recall that $y\to p_{j}^{(k)}(x,y)$ is the transition density of $X_{j+k}$ given $X_{j}=x$.
 The term in the parenthesis is bounded exactly as the corresponding term from (\ref{E}). 
Since  $d(B_{k_i},\cB)\geq A\ln a_N$ and  $B_{k_i}\subset (n_s,n_{s+1})$,
 it follows that 
 $n_{s+1}-n_{s}\geq A\ln a_N$. Therefore,  by Lemma \ref{psi dens lem}
 the term in the parenthesis is $O(\del^{A\ln a_N})$, and (\ref{Prob Diff}) follows.
\end{proof}

\subsection{The conditional cumulant generating function around nonzero resonant points and its additive behavior}
By Lemma  \cite[Lemma 3.4]{DS} we can find a bounded sequences of integers $(c_n)$ so that 
$\DS \bbE(S_N)-\sum_{n=1}^Nc_n$ is bounded in $N$. 
Therefore, by replacing $Y_n$ with $Y_n-c_n$ we can assume that 
$\DS \sup_N|\bbE(S_N)|<\infty$.

Now, let us fix some nonzero resonant point $t_j=2\pi l/m$. Let  $j(Y_n,m|\cE)$ be the most likely residue of 
$Y_n$ $\text{mod }m$  given $\cE$, and set 
\begin{equation}\label{Z n def}
 Z_n=Y_n\text{ mod m}-j(Y_n,m|\cE),\quad
\bar Z_n=Z_n-\bbE[Z_n|\cE]
\end{equation}
and for any $n<k$,
\[
S_{n,k} {\bar Z}=\sum_{s=n}^{k-1}\bar Z_s.
\]
 Note that for every $p\in[1,2]$ we have
\begin{equation}\label{YBp}
E\left[|S_{n,k}\bar Z|^p|\cE\right]\leq b\sum_{s=n}^{k-1}q_s(m|\cE)
\end{equation}
for some positive constant $b$.
Indeed, since $|x|^p\leq |x|+|x|^2$ for all real $x$ and $p\in(1,2)$ it is enough to prove \eqref{YBp} 
 for 
$p=1$ and 2. For $p=1$ it follows from the triangle inequality that 
$$
\bbE[|\bar Z_s|\big|\cE]\leq 2\bbE[|Z_s|\big|\cE]\leq 2K^2\bbP(Z_n\not=0|\cE)\leq 2K^2q_s(m|\cE).
$$
For $p=2$ the result follows since
 by Lemmata \ref{FiberQuantLemma} and \ref{CondChain}, there is  a constant $c>0$ so that
$$
\text{Var}(S_{n,k}Z|\cE)\leq c\sum_{s=n}^{k-1}\text{Var}(Z_s|\cE)\leq 
 c\sum_{s=n}^{k-1}\bbE(Z_s^2|\cE)\leq c(4K)^3\sum_{s=n}^{k-1}q_s(m|\cE).
$$
 where in the last inequality we have used that $$\bbE(Z_s^2|\cE)\leq \sum_{0<|j|\leq 4K}j^2\bbP(Z_s=j|\cE)\leq (4K)^3q_s(m|\cE).$$

Recall that by Lemma \ref{CondChain} conditioned on $\cE$, for each $s$ the Markov chains 
$\{X_n: n_s<n<n_{s+1}\}$ are uniformly elliptic with constants not depending on $s$ and $\cE$.
Moreover, the processes $\{X_n: n_s<n<n_{s+1}\}$ are independent, where we set $n_0=0$ and $n_{N_0+1}=\infty$.   Consider the family of functions 
$g_n^{(z)}=z Y_n+it_j Z_n=$ for $z$ small enough
(recall that both $Y_n$ and $Z_n$ are functions of $X_n$).
Then, since the $L^2$-norm of $Z_n$ is $O(\sqrt{\bar\eps})$ 
(both unconditionally and conditionally on $\cE$), 
 we can apply Theorem \ref{RPF} after conditioning on $\cE$ 
provided that $\bar\eps$ is small enough.
This means that there is a constant $r_0>0$ so that for any complex number $z$ with $|z|\leq r_0$ there are complex numbers $\la_n(z)=\la_{n,\cE}(z)$, uniformly bounded functions $h_n^{(z)}=h_{n,\cE}^{(z)}$ and  uniformly bounded measures $\nu_n^{(z)}=\nu_{n,\cE}^{(z)}$ (which depend on the realizations of $\cE$) so that $\nu_n^{(z)}(\textbf{1})=1$, $|\la_n(z)-1|<\frac12$, $\|h_n^{(z)}-1\|_{\infty}\leq \frac12$ and for any bounded function $g$, $n\geq1$ and $p>0$ we have
\begin{equation}\label{ExpConv}
\left\|\bbE[e^{it_jS_{n,p}Z+zS_{n,p}}g(X_{n+p})|\cE]/\la_{n,p}(z)-\nu_{n+p}^{(z)}(g)\int h_n^{(z)}d\mu_{n,\cE}\right\|_\infty\leq A_0\|g\|_\infty\gamma^p
\end{equation}
where $\gamma\in(0,1)$ and $A_0>0$ are some constants which do not depend on $\cE$ and $N$ and $\mu_{n,\cE}$ is the law of $X_n$ given $\cE$.
Set $\Pi_{n}(z)=\Pi_{n,\cE}(z)=\ln \lambda_{n,\cE}(z)$ and 
$$\Pi_{n,p}=\Pi_{n,p,\cE}=\sum_{s=n}^{n+p-1}\Pi_{s}(z).$$
Taking the logarithms  in \eqref{ExpConv}
we get that 
\begin{equation}\label{LogPi}
\left|\Pi_{n,p,\cE}(z)-\Gamma_{n,p}(z)\right|\leq C
\end{equation}
where 
\begin{equation}\label{Gamm def}
\Gamma_{n,p}(z)=\Gamma_{t_j,n,p,\cE}(z)=\ln\bbE[e^{it_jS_{n,p}Z+zS_{n,p}}|\cE].
\end{equation}
Let use also set $\Gamma_{N}(z)=\Gamma_{1,N}(z)=\Gamma_{t_j,N,\cE}(z)$.
Using the analyticity in $z$ we get that for any $u$ there is a constant $C_u>0$ so  that for any complex number $z$ with $|z|\leq r_0/2$ 
 the derivatives of $\Gamma_{n,p}$ satisfy
\begin{equation}\label{Pi est}
\left|\Pi_{n,p}^{(u)}(z)-\Gamma_{n,p}^{(u)}(z)\right|\leq C_u.
\end{equation}

\subsection{Estimates on the derivatives of the conditional cumulant generating function}

 We  need the following result.
\begin{lemma}\label{Pi'' lemma}
For every
 $p\in(1,2)$ we have the following:

(i) 
$\DS
\Pi_{1,N,\cE}'(0)=\bbE(S_N)+O\left((\ln\sig_N)^{\frac{2}{p+2}}(\sig_N)^{\frac{2p}{p+2}})\right)=O\left((\ln\sig_N)^{\frac{2}{p+2}}(\sig_N)^{\frac{2p}{p+2}})\right)
$
and so
$$
\Gamma_{t_j,N,\cE}'(0)=\bbE(S_N)+O\left((\ln\sig_N)^{\frac{2}{p+2}}(\sig_N)^{\frac{2p}{p+2}})\right)=O\left((\ln\sig_N)^{\frac{2}{p+2}}(\sig_N)^{\frac{2p}{p+2}})\right).
$$

(ii) We have
\begin{equation}\label{Pi}
\Pi_{1,N,\cE}''(0)=\nonumber V_N+O\left((\sig_N)^{\frac{2p}{p+1}}(\ln \sig_N)^{\frac{1}{p+1}}\right)
\end{equation}
and so 
\begin{equation}\label{Pi1}
\Gamma_{t_j,N,\cE}''(0)=\nonumber V_N+O\left((\sig_N)^{\frac{2p}{p+1}}(\ln \sig_N)^{\frac{1}{p+1}}\right).
\end{equation}

(iii) For any $u\geq 3$ there exist  constants $D_u>0$ and $\del_u>0$ so that for any $N$ and all $h\in[-\del_u, \del_u]$ we have
$$
\left|\Pi_{1,N,\cE}^{(u)}(ih)\right|\leq D_uV_N
$$
and therefore there is a constant $D_u'$ such that
$$
\left|\Gamma_{t_j,N,\cE}^{(u)}(ih)\right|\leq D'_uV_N.
$$
\end{lemma}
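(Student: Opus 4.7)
The plan is to exploit \eqref{Pi est}, which reduces each claim to the corresponding bound on $\Pi_{1,N,\cE}^{(u)}$ (with the $O(1)$ errors absorbed), and to exploit the additivity $\Pi_{1,N,\cE}(z)=\sum_{s=1}^{N}\ln\lambda_s(z)$ provided by Theorem~\ref{RPF}. I would then estimate each $\ln\lambda_s$ using spectral perturbation around the trivial operator (whose leading eigenvalue is identically $1$), taking advantage of the fact that after conditioning on $\cE$ the phase $U_n=t_jZ_n$ has small $L^1$ norm, since $q_n(m|\cE)<A\bar\eps$ for $n\notin\cB$.

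For (iii), I would first eliminate the affine piece of $\ln\lambda_s$ by centering: substituting $\tilde Y_{s+1}=Y_{s+1}-\bbE Y_{s+1}$ multiplies $\lambda_s(z)$ by $e^{-z\bbE Y_{s+1}}$ and therefore leaves $\Pi_s^{(u)}(z)$ unchanged for $u\geq 2$. Spectral perturbation then gives $|\tilde\lambda_s(z)-1|=O\!\bigl(|z|^{2}V(Y_{s+1})+q_{s+1}(m|\cE)\bigr)$ on a disc of radius independent of $s$, hence $|\tilde\Pi_s(z)|$ obeys the same bound. Cauchy's inequality yields $|\Pi_s^{(u)}(ih)|\leq C_u\bigl(V(Y_{s+1})+q_{s+1}(m|\cE)\bigr)$ for $u\geq 3$ and $|h|$ small, and summing via Lemma~\ref{MomEst}(i) together with the regime $M_N\leq R\ln V_N$ produces the $D_uV_N$ bound.

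For (i) and (ii), I would combine the Perron--Frobenius formula with a two-level Hölder estimate that interpolates between $V(Y_{s+1})$ and $q_{s+1}(m|\cE)$. Differentiating the eigenequation at $z=0$ and exploiting the normalization $\nu_s^{(z)}(h_s^{(z)})=1$, together with the continuity of $\nu_s^{(0)}$ and $h_{s+1}^{(0)}$ in $U$ supplied by Theorem~\ref{RPF}, yields (after collecting a telescoping $O(1)$ remainder) the representation
$$\Pi_s'(0)-\bbE Y_{s+1}=\frac{\tilde\bbE\bigl[(Y_{s+1}-\tilde\bbE Y_{s+1})\,\bigl(e^{it_j Z_{s+1}}h_{s+1}^{(0)}(X_{s+1})-\lambda_s(0)\bigr)\bigr]}{\lambda_s(0)}+O\!\bigl(q_{s+1}(m|\cE)\bigr),$$
where $\tilde\bbE$ denotes the expectation under $X_s\sim\nu_s^{(0)}$. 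Hölder with conjugate exponents $p_h=(p+2)/p$ and $p_h'=(p+2)/2$ bounds the numerator by $CV(Y_{s+1})^{1/p_h}q_{s+1}(m|\cE)^{1/p_h'}$, using the interpolation $\|Y_{s+1}-\tilde\bbE Y_{s+1}\|_{L^{p_h}}\leq K^{1-2/p_h}V(Y_{s+1})^{1/p_h}$ on the bounded centered variable and the elementary estimate $\|e^{it_j Z_{s+1}}h_{s+1}^{(0)}(X_{s+1})-\lambda_s(0)\|_{L^{p_h'}}\leq Cq_{s+1}(m|\cE)^{1/p_h'}$ (since both $e^{it_j Z_{s+1}}-1$ and $h_{s+1}^{(0)}-\mathbf{1}$ are, up to $O(q_{s+1}(m|\cE))$, supported on $\{Z_{s+1}\neq 0\}$). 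A second Hölder across $s$ with the same exponents gives
$$\bigl|\Pi_{1,N,\cE}'(0)-\bbE(S_N)\bigr|\leq CV_N^{1/p_h}M_N^{1/p_h'}+O(M_N),$$
and substituting $M_N\leq R\ln V_N$ recovers the bound in (i). Part (ii) runs analogously starting from $\Pi_s''(0)$ and using the pair $p_h=(p+1)/p$, $p_h'=p+1$; the main term $\sum_s V(Y_{s+1})$ plus Markov covariance corrections totals $V_N+O(1)$ by Lemma~\ref{MomEst}(i).

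The principal obstacle is establishing the Hölder representation cleanly in the non-homogeneous Markov setting, where both the left eigenmeasure $\nu_s^{(0)}$ and the right eigenfunction $h_{s+1}^{(0)}$ depend on the phase $t_j Z_{s+1}$ and must be tracked uniformly in $s$, and where one must isolate the Markov covariance structure so that the leading $\sum V(Y_{s+1})$ term matches $V_N$ up to $O(1)$; the uniform bounds and continuity in $U$ needed to do this are precisely what Theorem~\ref{RPF} supplies.
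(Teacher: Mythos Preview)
Your route is genuinely different from the paper's and, as written, has a real gap. The paper never works with individual $\Pi_s$; instead it partitions $\{1,\dots,N\}$ into blocks $B_k$ along which $\sum_{n\in B_k}q_n(m|\cE)\asymp\ve_N$ (for (i)--(ii)) or along which the conditional variance is bounded (for (iii)), invokes \eqref{Pi est} to replace $\Pi_{B_k}^{(u)}(0)$ by $\Gamma_{B_k}^{(u)}(0)$ up to $O(1)$, and then estimates $\Gamma_{B_k}^{(u)}(0)$ directly from the explicit ratio formula for the log-moment generating function via a single H\"older step (exponent $p$) and the moment bounds of Lemma~\ref{MomEst}. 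Optimizing $\ve_N$ yields the stated powers of $\sig_N$ and $\ln\sig_N$; for (iii) one uses Fa\`a di Bruno on each bounded-variance block, and the number of blocks is $O(V_N)$.

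The gap in your index-by-index strategy is that the sequential Perron--Frobenius objects $\lambda_s(0)$, $h_{s+1}^{(0)}$, $\nu_s^{(0)}$ of Theorem~\ref{RPF} are \emph{not local} to the index $s$: $h_{s+1}^{(0)}$ is (up to normalization) the limit of $R_0^{s+1,n}\mathbf{1}$ and therefore encodes the phases $t_jZ_k$ for all $k>s+1$. Hence your claim that $h_{s+1}^{(0)}-\mathbf{1}$ is, up to $O(q_{s+1}(m|\cE))$, supported on $\{Z_{s+1}\neq 0\}$ is unfounded --- if $q_{s+1}=0$ but $q_{s+2}\asymp\bar\eps$, then $h_{s+1}^{(0)}$ can differ from $\mathbf{1}$ by order $\bar\eps$, not $q_{s+1}$. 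This breaks the pointwise bound $\|e^{it_jZ_{s+1}}h_{s+1}^{(0)}-\lambda_s(0)\|_{L^{p_h'}}\leq Cq_{s+1}^{1/p_h'}$ on which your H\"older argument for (i)--(ii) rests. The same non-locality obstructs (iii): your bound $|\tilde\lambda_s(z)-1|=O(|z|^2V(Y_{s+1})+q_{s+1})$ would force $\tilde\Pi_s'(0)=O(q_{s+1})$, which is stronger than (i) itself; the perturbation estimate \eqref{Pert} only delivers $|\tilde\lambda_s(z)-1|=O(|z|\sqrt{V(Y_{s+1})}+q_{s+1})$, and Cauchy then yields $|\Pi_s^{(u)}(ih)|\leq C_u(\sqrt{V(Y_{s+1})}+q_{s+1})$, whose sum over $s$ can be as large as $\sqrt{NV_N}\gg V_N$. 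The paper's block device sidesteps this entirely: on a block one never touches the eigenobjects, and the smallness of $\sum_{n\in B_k}q_n$ keeps the denominator $|\bbE[e^{it_jS_{B_k}\bar Z}|\cE]|\geq\tfrac12$ so that the ratio formula for $\Gamma_{B_k}^{(u)}(0)$ is directly tractable.
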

\begin{proof}
Denote $\Pi_{n}=\Pi_{t_j, n,\cE}$.
For any $I$ we set $\DS \Pi_I(z)=\sum_{n\in I}\Pi_{n}(z)$.

 First we prove  (ii).
Let $\ve_N<\bar\epsilon$. Then the number of $n$'s between $1$ and $N$ so that $q_n(m|\cE)\geq \ve_N$ is $O(\ln \sig_N/\ve_N)$. We subdivide the set of $n$'s between $1$ to $N$ so that $q_n(m)<\ve_N$ into blocks $B_1,...,B_{l_N}$ so that for each $k$ we have   
$\DS \ve_N\leq \sum_{n\in B_{k}}q_n(m|\cE)\leq 2\ve_N$. Then 
\begin{equation}
\label{EllN-Log}
l_N=O(\ln\sig_N/\ve_N)
\end{equation}
 and each $B_k$ is contained in one of the blocks $(n_s,n_{s+1})$.
By \eqref{LogPi} for any $k$ we have
\[
\Pi_{B_k}''(0)=\frac{\bbE[e^{it_j S_{B_k} Z}S_{B_k}^2|\cE]}{\bbE[e^{it_j S_{B_k}Z}|\cE]}-
\left(\frac{\bbE[e^{it_j S_{B_k}Z}S_{B_k}|\cE]}{\bbE[e^{it_j S_{B_k}Z}|\cE]}\right)^2+O(1).
\] 
$$
=\frac{\bbE[e^{it_j S_{B_k}\bar Z}S_{B_k}^2|\cE]}{\bbE[e^{it_j S_{B_k}\bar Z}|\cE]}-
\left(\frac{\bbE[e^{it_j S_{B_k}\bar Z}S_{B_k}|\cE]}{\bbE[e^{it_j S_{B_k}\bar Z}|\cE]}\right)^2+O(1).
$$
To estimate the first term on the right hand side,  we first write
$$
\frac{\bbE[e^{it_j S_{B_k}\bar Z}S_{B_k}^2|\cE]}{\bbE[e^{it_j S_{B_k}\bar Z}|\cE]}=
\frac{\bbE[(e^{it_j S_{B_k}\bar Z}-1)(S_{B_k}^2-\bbE[S_{B_k}^2|\cE])|\cE]}
{\bbE[e^{it_j S_{B_k}\bar Z}|\cE]}+\bbE[S_{B_k}^2|\cE].
$$
To estimate the first summand on the right hand side,  by \eqref{YBp} we have
$$
\bbE\left[|S_{B_k} \bar Z|\,\big|\cE\right]\leq C\sum_{n\in B_{k}}q_n(m|\cE)\leq 2C\ve_N
$$
and so when $\ve_N$ is smaller than some sufficiently small constant $c_0>0$ we have
$$
\left|\bbE[e^{it_j S_{B_k}\bar Z}|\cE]-1\right|\leq \frac12
$$ 
which implies that 
$$
|\bbE[e^{it_j S_{B_k}\bar Z}|\cE]|\geq \frac12.
$$
Next, let us estimate the numerator. Since
\begin{equation}\label{Y B}
|e^{it_j S_{B_k}\bar Z}-1|\leq |t_j||S_{B_k}\bar Z|
\end{equation}
for every $p\in(1,2)$ we have
$$
\left|\bbE[(e^{it_j S_{B_k}\bar Z}-1)(S_{B_k}^2-\bbE[S_{B_k}^2|\cE])|\cE]\right|\leq C\|S_{B_k}\bar Z\|_{L^p(\cE)}
\left(\|S_{B_k}^2\|_{L^{q}(\cE)}+|\bbE[S_{B_k}^2|\cE]|\right)
$$
where $L^q(\cE)$ denotes the $L^q$ norm with respect to the conditional measure and $q$ is the conjugate exponent of $p$.
To estimate $\|S_{B_k}^2\|_{L^q(\cE)}$, 
let  $q_0=[q]+1$. Then
$$
\bbE[S_{B_k}^{2q_0}|\cE]=
\bbE[\left(\bar S_{B_k}+\bbE[S_{B_k}|\cE]\right)^{2q_0}|\cE]
=\sum_{j=0}^{2q_0}\binom{2q_0}{j}\bbE[\bar S_{B_k}^j|\cE]
(\bbE[S_{B_k}|\cE])^{2q_0-j}.
$$
Since 
$\DS \sup_{a<b}|\bbE[S_b]-\bbE[S_a]|<\infty$
  due to \eqref{Expectation}, we have that $\bbE[S_{B_k}|\cE]$ is uniformly bounded in $k$. 
Applying the moment estimates  of Lemma \ref{MomEst}
to the conditioned Markov chain we see that for every integer $w\geq1$ there is  a constant $C_w\geq 1$ so that 
\begin{equation}
\label{Norms}
\|\bar S_{B_k}\|_{L^w(\cE)}\leq C_w(1+\|\bar S_{B_k}\|_{L^2(\cE)}).
\end{equation}
We thus conclude that 
$$
\|S_{B_k}^2\|_{L^q(\cE)}\leq \|S_{B_k}^2\|_{L^{q_0}(\cE)}=
 \|S_{B_k}\|_{L^{2q_0}(\cE)}^2
=O(1+\|\bar S_{B_k}\|_{L^2(\cE)}^2).
$$
To estimate the term $\bbE[S_{B_k}^2|\cE]$ we have 
$$
\bbE[S_{B_k}^2|\cE]=V(S_{B_k}|\cE)+(\bbE[S_{B_k}|\cE])^2.
$$
The second term is $O(1)$ because of \eqref{Expectation}. Combining the above estimates and using \eqref{YBp} we see that 
$$
\left|\frac{\bbE[(e^{it_j S_{B_k}\bar Z}-1)(S_{B_k}^2-\bbE[S_{B_k}^2|\cE])|\cE]}
{\bbE[e^{it_j S_{B_k}\bar Z}|\cE]}\right|\leq C{(\ve_N)^{1/p}}(1+V(S_{B_k}|\cE)).
$$
We conclude that for every $p\in(1,2)$ we have
$$
\frac{\bbE[e^{it_j S_{B_k}Z}S_{B_k}^2|\cE]}{\bbE[e^{it_j S_{B_k}Z}|\cE]}=\bbE[S_{B_k}^2|\cE]+
O\big((\ve_N)^{1/p}\big)V(S(B_k)|\cE)+
O\big((\ve_N)^{1/p}\big).
$$
Next, similar arguments show that for every $p\in(1,2)$ we have
\begin{equation}\label{Sim}
\left|\frac{\bbE[e^{it_j S_{B_k} Z} S_{B_k}|\cE]}{\bbE[e^{it_j S_{B_k}Z}|\cE]}-\bbE[S_{B_k}|\cE]\right|=\left|\frac{\bbE[(e^{it_j S_{B_k}\bar Z}-1)(S_{B_k}-\bbE[S_{B_k}|\cE])|\cE]}{\bbE[e^{it_j S_{B_k}\bar Z}|\cE]}\right|
\end{equation}
$$
=O\big(\ve_N)^{1/p}\big)\sqrt{V(S(B_k)|\cE)}.
$$ 

Now, by \eqref{Expectation} and since each $B_k$ is contained in one of the block 
$(n_s,n_{s+1})$ we have 
$$
\left|\bbE[S_{B_k}|\cE]-\bbE(S_{B_k})\right|\leq C
$$
and therefore, since $|\bbE(S_{B_k})|$ is also 
bounded
in $k$, we get that 
$$
|\bbE[S_{B_k}|\cE]|=O(1).
$$
Combining the above estimates we derive that for every $p\in(1,2)$ we have
\[
\Pi_{B_k}''(0)=V(S_{B_k}|\cE)(1+O\left(\ve_N)^{1/p})\right)+
O\left((\ve_N)^{1/p}\right)\sqrt{V(S_{B_k}|\cE)}+O(1).
\]
Next, set $ \mathfrak{s}_k=V(S(B_k)|\cE)$. Then 
\begin{equation}\label{sk}
\sum_{k=1}^{l_N}\sqrt{\mathfrak{s}_k}\leq\sqrt{l_N}
\sqrt{\sum_{k=1}^{l_N} \mathfrak{s}_k}\leq A\left(\frac{\ln\sig_N}{\ve_N}\right)^{1/2}\sig_N
\end{equation}
for some constant $A$.
Therefore, the contribution to $\Pi_{1,N}''(0)$ coming from the terms 
$O\left((\ve_N)^{1/p}\right)\sqrt{V(S_{B_k}|\cE)}$ is 
$O\left(\sig_N(\ve_N)^{1/p-1/2} \sqrt{\ln\sig_N} \; \right).$
Now, because of the Lemma \ref{CondChain} and exponential decay of correlation  
(Lemma \ref{DEC}) we have 
\[
V(S_N|\cE)=\sum_{k=1}^{l_N}V(S_{B_k}|\cE)+O(l_N).
\]
Since $l_N=O(\ln\sig_N/\ve_N)$ we conclude that 
\begin{equation}\label{Pi sum Conc}
\sum_{n=1}^N\Pi_n''(0)=
\end{equation}
$$O(\ln\sig_N/\ve_N)+(1+O\big((\ve_N)^{1/p})\big)(V(S_N|\cE)+O(\ln\sig_N/\ve_N))+
O\left(\sig_N(\ve_N)^{1/p-1/2} \sqrt{\ln\sig_N} \; \right)
$$
$$=V(S_N|\cE)+O\big((\ve_N)^{1/p}\big)V(S_N|\cE)+O(\ln\sig_N/\ve_N)+
O\left(\sig_N(\ve_N)^{1/p-1/2} \sqrt{\ln\sig_N}\; \right).$$

This together with (\ref{VarDiff}) and the choice 
$\ve_N=\sig_N^{-\frac{2p}{p+1}} \big(\ln\sig_N\big)^{\frac{p}{p+1}}$ yields (ii), where we have  used that,  by \eqref{VarRel},
$V(S_N|\cE)/V_N$ is uniformly bounded and bounded away from $0.$ 
\smallskip

Now we derive (i) using the estimates  obtained in the proof of (ii). 
By \eqref{Sim} and \eqref{ExpDiff} 
$$
\Pi_{1,N}'(0)=\bbE(S_N)+O(\ln\sig_N/\ve_N)+
O\left((\ve_N)^{1/p}\right)\sum_{k=1}^{l_N}\sqrt{V(S(B_k)|\cE)}.
$$
By \eqref{sk} 
$$
\Pi_{1,N}'(0)=\bbE(S_N)+O(\ln\sig_N/\ve_N)+
 O\left((\ve_N)^{1/p} \left(\frac{\ln\sig_N}{\ve_N}\right)^{1/2}\sig_N\right).
$$
Taking $\ve_N=\left(\frac{\ln\sig_N}{\sig_N^2}\right)^{\frac{p}{p+2}}$ we get (i).
\smallskip

In order to prove (iii), 
let $c_0>0$ be such that for any $n<k$ with $\DS \sum_{s=n}^{k-1}q_s(m|\cE)\leq c_0$ we have 
\begin{equation}\label{LB}
|\bbE(e^{it_j S_{n,k}}|\cE)|=|\bbE(e^{it_j S_{n,k} \bar Z}|\cE)|\geq \frac12.
\end{equation} 
Fix some $s$, and decompose $\{n_s<n<n_{s+1}\}$ into  blocks $B_1,B_2,...,B_{L_s}$, 
$L_s\leq R'\ln \sig_N$, so that for each $k$ we have 
$\DS \frac{c_0}{2}<\sum_{n\in B_k}q_n(m|\cE)<c_0$ (this is possible if $\bar\epsilon$ is small enough). 

Next, let us fix some large constant $A$. If $V(S_{B_j}|\cE)$ is larger than  $2A$ then we subdivide the block $S_{B_j}$ into smaller blocks  so that the variance along each blocks is between $A$ and $2A$. 
We conclude that there is a partition of $\{1,2,...,N\}$ into blocks $\tilde B_1,...,\tilde B_{\tilde L}$ so that\footnote{Since $L_s\!\!\leq\!\!R'\ln \sig_N$ the  blocks $B_j$ for which 
$V(S_{B_j}|\cE)\!\!\leq\!\! 2A$ only contribute $O(N_0 \ln \sig_N)\!\!=\!\!O(\ln^2\sig_N)$
to the total variance, and so in order to estimate $\tilde L$ we can disregard these blocks  when one of the original blocks has  small variance.} $\tilde L\asymp \sig_N^2$, the 
 conditional variances along the blocks are uniformly bounded  and 
\begin{equation}\label{LB1}
|\bbE(e^{it_j S_{\tilde B_s}}|\cE)|\geq \frac12.
\end{equation}
for each block $\tilde B_s$. 
 Since $V(S_{\tilde B_s}|\cE)\leq 2A$  
we have 
$$
\left|\bbE(e^{it_j S_{\tilde B_s}+ih S_{\tilde B_s}}|\cE)-\bbE(e^{it_j S_{\tilde B_l}}|\cE)\right|\leq 4A|t_j||h|
$$
and thus there exists $0<h_0<r_0/2$ so that for every $h\in[-h_0,h_0]$ we have  
\begin{equation}\label{LB2}
|\bbE(e^{it_j S_{\tilde B_l}+ihS_{\tilde B_l}}|\cE)|\geq \frac14.
\end{equation}

 Next, let us decompose $\Pi_{1,N}$ according to the blocks $\tilde B_s$:
$$
\Pi_{1,N}=\sum_{s=1}^{\tilde L}\Pi_{\tilde B_s}.
$$ 
Differentiating both $\Pi_{\tilde B_s}$  and $\Gamma_{t_j,\tilde B_s,\cE}=\Gamma_{\tilde B_s}$\, $u$-times and  using the Cauchy integral formula  together with \eqref{LogPi} we see that if $|h|<h_0$ then 
\begin{equation}\label{One}
\left|\Pi_{1,\tilde B_s,\cE}^{(u)}(ih)\right|\leq C_u+\left|\Gamma_{\tilde B_s}^{(u)}(ih)\right|
\end{equation}
where $C_u$ is a constant which depends on $u$ but not on $\cE,N$ or $h$. Next, let us bound  
$$\psi(h)=\psi_s(h):=\Gamma_{\tilde B_j}^{(u)}(ih)=\ln\bbE[e^{it_j S_{\tilde B_s}}e^{ih S_{\tilde B_s}}|\cE].$$
To ease the notation, let us abbreviate $S_{\tilde B_js}=S$ and 
$ W=t_j S_{\tilde B_s}$. Then
by Fa\'a di Bruno's formula, for every $h\in[-r_0,r_0]$ we have
$$
|\psi^{(u)}(h)|
=\left|\sum_{(m_1,...,m_{u})}\frac{u!}{\prod_{q=1}^{u}(m_q!(q!)^{m_q})}\cdot\frac1{\psi(h)^{\sum_{l=1}^u m_q}}\prod_{w=1}^{u}\left((i)^w
\bbE[S^we^{i W+ih S}|\cE]\right)^{m_w}
\right|
$$
where 
$(m_1,...,m_u)$ range over all the $u$-tuples of nonnegative integers such that $\DS \sum_{q}q m_q=u$.
By applying  \eqref{LB2} (which provides lower bounds on the denominators) together with Lemma \ref{MomEst} (taking into account Lemma \ref{CondChain})  and 
 the H\"older inequality (to bound the numerators) we see that if $|h|<h_0$ then 
 $$
 |\psi^{(u)}(h)|\leq C(u,A)
 $$
 for some constant $C(u,A)$ which depends only on $u$ and $A$. Thus by \eqref{One}
 $$
 \left|\Pi_{1,\tilde B_s,\cE}^{(u)}(ih)\right|\leq C'(u,A)
 $$
 and since the number of blocks is $O(\sig_N^2)$ we conclude that 
  if $|h|<h_0$  then 
  $$
  \left|\Pi_{1,N,\cE}^{(u)}(h)\right|\leq C''\sig_N^2
  $$
  for some other constant $C''$.
 \end{proof}

\subsection{The canonical form of the 
generalized Edgeworth polynomials at resonant points}
Let
\begin{equation}
\label{DefLambda}
\Lambda_{t_j,N,\cE}(h)=\Gamma_{t_j,N,\cE}(ih/\sig_N)=\ln\bbE[e^{it_j S_N Y}e^{ihS_N/\sig_N}|\cE]
\end{equation}
and set
\begin{equation}\label{H def}
H_{t_j,N,\cE,r}(t)=
\end{equation}
$$
1+\sum_{\bar k}\frac1{k_1!\cdots k_{r}!}\left(\frac{\Lambda_{t_j,N,\cE}^{(3)}(0)}{3!}\right)^{k_1}\cdots \left(\frac{\Lambda^{(r+2)}_{t_j,N,\cE}(0)}{(r+2)!}\right)^{k_{r}}(it)^{3k_1+...+(r+2)k_{r}}
$$
where the summation runs over the collection of $r$ tuples of nonnegative integers 
$(k_1,...,k_{r})$ that are not all $0$ so that $\sum_{j} jk_j\leq r$.
Then we can also write
\begin{equation}\label{H}
H_{t_j, N, \cE,r}(t)=1+\sum_{q=1}^{r}\sig_N^{-j}\tilde P_{t_j,\cE,q}(t)
\end{equation}
with 
$$
\tilde P_{t_j,N,\cE,q}(x)=\sum_{\bar k\in A_{q}}C_{\bar k}\prod_{j=1}^{s}\left(\sig_N^{-2}\Gamma^{(j+2)}_{t_j,N,\cE}(0)\right)^{k_j}(ix)^{3k_1+...+(s+2)k_{s}}
$$
where $A_{q}$ is the set of all  tuples of nonnegative integers $\bar k=(k_1,...,k_{s})$, 
 for some $s=s(\bar k)\geq 1$
such that $\DS \sum_{s}sk_s=q$ (note that when $j\leq r$ then $s\leq r$ since $k_s\geq1$). 
 Moreover
 $\DS 
 C_{\bar k}=\prod_{j=1}^{s}\frac{1}{k_j!(j+2)^{k_j}}.
 $

By Lemma \ref{Pi'' lemma}, the $L^\infty$ norm of 
the coefficients of each $\tilde P_{t_j,N,\cE,q}$ are uniformly bounded.
Next, set
$$
\tilde\Lambda_{t_j,N,\cE}(h)=\Lambda_{t_j,N,\cE}(h)-\left(\Lambda_{t_j,N,\cE}(0)+h\Lambda_{t_j,N,\cE}'(0)+(h^2/2)\Lambda_{t_j,N,\cE}''(0)\right).$$
Set also
$$
d_N=\Lambda_{t_j,N,\cE}'(0)-\frac{i\bbE[S_N]}{\sig_N}
\quad\text{and}\quad
u_N=\Lambda_{t_j,N,\cE}''(0)-1.
$$
Then by Lemma \ref{Pi'' lemma}, for every $p\in(1,2)$ we have
\begin{equation}\label{u d}
\|d_N\|_{L^\infty}=O(\sig_N^{-\frac{2-p}{p+2}}\ln^{\frac{2}{p+2}}\sig_N), \quad
\|u_N\|_{L^\infty}=O\big(\sig_N^{-\frac{2}{p+1}}(\ln\sig_N)^{\frac{1}{p+1}}\big).
\end{equation}

\begin{proposition}
For every $r$ there  are constants $\del_r,C_r>0$ so that for every realization of $\cE$ and every real $h$ with $|h|\leq \del_r \sig_N$ we have
$$
\bbE\left(e^{i(t_j+h/\sig_N)S_N}|\cE\right)
=\bbE(e^{it_j S_N}|\cE)e^{-h\Lambda_{t_j,N,\cE}'(0)+(h^2/2)\Lambda_{t_j,N,\cE}''(0)}H_{t_j,N,\cE,r}(h)
+\theta_{N,r,\cE}
\sig_N^{-r-1}e^{-h^2/4}
$$
where $\theta_{N,r,\cE}$ is a random variable so that $\sup_{N}\|\theta_{N,r,\cE}\|_{L^\infty}<\infty$. As a consequence,
\begin{equation}
\label{PreExpRes}
\bbE\left(e^{i(t_j+h/\sig_N)S_N}|\cE\right)
=\bbE(e^{it_j S_N}|\cE)e^{-ih\bbE[S_N]/\sig_N}e^{-h^2/2}\times
\end{equation}
$$
\left(e^{hd_N+h^2u_N/2}
e^{-h\Lambda_{t_j,N,\cE}'(0)+h^2/2\Lambda_{t_j,N,\cE}''(0)}H_{t_j,\cE,r}(h)\right)
+\theta_{N,r,\cE}
\sig_N^{-r-1}e^{-h^2/4}
$$

\end{proposition}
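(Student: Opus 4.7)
The plan is to derive the expansion from a Taylor expansion of $\Lambda_{t_j,N,\cE}$ at the origin combined with an application of Proposition~\ref{LamProp} to the third-order remainder. Since
\[
\bbE\bigl(e^{i(t_j+h/\sig_N)S_N}\big|\cE\bigr)=e^{\Lambda_{t_j,N,\cE}(h)}\quad\text{and}\quad\bbE\bigl(e^{it_jS_N}\big|\cE\bigr)=e^{\Lambda_{t_j,N,\cE}(0)},
\]
I would factor
\[
e^{\Lambda(h)}=e^{\Lambda(0)}\cdot e^{h\Lambda'(0)+(h^2/2)\Lambda''(0)}\cdot e^{\tilde\Lambda(h)},
\]
reducing the problem to an expansion of $e^{\tilde\Lambda(h)}$ with an error that multiplies cleanly with the Gaussian-like factor $e^{h\Lambda'(0)+(h^2/2)\Lambda''(0)}$.

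To invoke Proposition~\ref{LamProp}, set $\cL_N(s):=\tilde\Lambda(s\sig_N)$. The normalisation $\cL_N(0)=\cL_N'(0)=\cL_N''(0)=0$ holds by construction. For $j\geq 3$ we have $\cL_N^{(j)}(s)=\sig_N^j\Lambda^{(j)}(s\sig_N)=i^j\Gamma_{t_j,N,\cE}^{(j)}(is)$, using $\Lambda(h)=\Gamma(ih/\sig_N)$, and by Lemma~\ref{Pi'' lemma}(iii) this is bounded by $D_j'\sig_N^2$ on a fixed neighbourhood of $s=0$, uniformly in the realisation of $\cE$. Hence Proposition~\ref{LamProp} applies, and since $\bar\cL_N(h):=\cL_N(h/\sig_N)=\tilde\Lambda(h)$ with $\bar\cL_N^{(j)}(0)=\Lambda^{(j)}(0)$ for $j\geq 3$, the polynomial $\cH_{N,r}$ produced by the proposition coincides with $H_{t_j,N,\cE,r}$ from \eqref{H def}. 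This yields, for $|h|\leq\ve_r\sig_N$,
\[
\bigl|e^{\tilde\Lambda(h)}-H_{t_j,N,\cE,r}(h)\bigr|\leq B_r\sig_N^{-(r+1)}\max(|h|,|h|^{(r+2)(r+3)}),
\]
with constants uniform in $\cE$ and $N$.

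The remaining task is the error absorption. Multiplying the above display by $e^{\Lambda(0)}e^{h\Lambda'(0)+(h^2/2)\Lambda''(0)}$ reproduces the main term, and the error picks up the modulus of this prefactor. Using $|e^{\Lambda(0)}|\leq 1$ together with the bounds \eqref{u d} from Lemma~\ref{Pi'' lemma}(i)-(ii), which control the real parts of $\Lambda'(0)$ and of $\Lambda''(0)$ modulo their Gaussian values up to $o(1)$ errors, the real part of $h\Lambda'(0)+(h^2/2)\Lambda''(0)$ is bounded by $-h^2/3$ for $|h|\leq\ve_r\sig_N$ and $N$ large enough. The whole prefactor is therefore $O(e^{-h^2/3})$, and the polynomial $\max(|h|,|h|^{(r+2)(r+3)})$ is absorbed into the slack $e^{-h^2/3}/e^{-h^2/4}=e^{-h^2/12}$, producing the claimed bound $\theta_{N,r,\cE}\sig_N^{-r-1}e^{-h^2/4}$.

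Finally, the \emph{as a consequence} formula \eqref{PreExpRes} follows by algebraic rearrangement of the exponent $h\Lambda'(0)+(h^2/2)\Lambda''(0)$ using the definitions of $d_N$ and $u_N$ introduced just before the proposition; by \eqref{u d} both are small, so the factor $e^{hd_N+(h^2/2)u_N}$ is a small perturbation of the identity while the remaining terms reproduce the explicit $e^{-ih\bbE[S_N]/\sig_N-h^2/2}$ Gaussian envelope. I expect the error-absorption step to be the main technical obstacle, as it requires carefully propagating the uniform-in-$\cE$ estimates of Lemma~\ref{Pi'' lemma} through the exponentiation and verifying that the polynomial remainder is genuinely beaten by Gaussian decay, uniformly in both $h$ and in the realisation of $\cE$.
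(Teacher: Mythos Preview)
Your approach coincides with the paper's: both factor $\bbE(e^{i(t_j+h/\sig_N)S_N}|\cE)$ through $e^{\tilde\Lambda(h)}$, verify the hypotheses of Proposition~\ref{LamProp} via Lemma~\ref{Pi'' lemma}(iii), identify the resulting polynomial $\cH_{N,r}$ with $H_{t_j,N,\cE,r}$, and your error-absorption paragraph simply makes explicit what the paper leaves to the reader. One small correction: by the definition of $\Lambda$ through $\Gamma$ (which is built from the shifted residues $Z_n$, not $Y_n$), your first display is off by the $\cE$-measurable unimodular factor $\exp\bigl(it_j\sum_n j(Y_n,m|\cE)\bigr)$; the paper carries this factor explicitly, but since the same factor appears in $\bbE(e^{it_jS_N}|\cE)$ it cancels in your factorization and the argument is unaffected.
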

\begin{proof}
We have 
$$
\bbE\left(e^{i(t_j+h/\sig_N)S_N}|\cE\right)=\exp\left(\Lambda_{N,\cE}(h)+
it_j\sum_{n=1}^{N}j(Y_n,m|\cE)\right)=
$$
$$
\bbE(e^{it_j S_N}|\cE)e^{-h\Lambda_{N,\cE}'(0)+(h^2/2)\Lambda_{N,\cE}''(0)}\exp(\tilde\Lambda_{N,\cE}(h)).
$$
Notice now that
$\DS \tilde\Lambda_{N,\cE}^{(q)}(0)=0,\,q=0,1,2$
and that
for $j\geq3$,
$$
\left\|\tilde\Lambda_{N,\cE}^{(j)}(h)\right\|_{L^\infty}=\left\|\Lambda_{N,\cE}^{(j)}(h)\right\|_{L^\infty}=O(\sig_N^{-(j-2)})
$$
and  $\sig_N=\sig_{N,\cE}(1+o(1))$. Now the proof of the proposition is completed using Proposition \ref{LamProp}, applied for every realization of $\cE$.
\end{proof}

\subsection{Proof of Theorem \ref{MainThm}}
Recall the decomposition \eqref{SplitInt} of $\bbP(S_N=k)$. In this section we will expand the integrals $\sum_{j}\int_{I_j} e^{-itk}\bbE(e^{it S_N})dt$ for resonant points $t_j=\frac{2\pi l}{m}$ so that $M_N(m)\leq R\sig_N$ for some constant $R$.

Recall first that 
$$
\int_{-\infty}^{\infty} e^{-i\al h}e^{-h^2/2}h^k dh=(-1)^kH_k(\al)\varphi(\al)
$$
where $H_k$ is the $k$-th Hermite polynomial.
Now, let us write 
$$
\int_{I_j}e^{-ik t}\bbE(e^{it S_N}) dt=\bbE\left[e^{-it_jk}\sig_N^{-1}
\int_{-\del\sigma_N}^{\del \sigma_N}e^{-ikh}\bbE\left(e^{i(t_j+h/\sig_N)S_N}|\cE\right)dh\right].
$$

Expanding the terms $e^{hd_N}$ and $e^{-h^2 u_N/2}$ 
 in \eqref{PreExpRes} and using \eqref{u d} 
yields that the contribution of $t_j$  up to  $o\left(\sig_N^{-r}\right)$ equals to 
the expectation of
$$
\sig_N^{-1}\bbE(e^{it_j S_N}|\cE)
\int_{-\infty}^\infty { e^{-i (t_j k+hk/\sigma_N)}}
\left(1+\sum_{j=1}^{3r-2}\frac{h^jd_N^j}{j!}\right)\!\!
\left(1+\sum_{j=1}^{r}\frac{h^{2j}u_N^j}{j!2^j}\right)H_{t_j,N,\cE,r}(h)e^{-h^2/2}dh.
$$
Next,
\begin{equation}\label{A def}
\left(1+\sum_{j=1}^{3r-2}\frac{h^jd_N^j}{j!}\right)
\left(1+\sum_{j=1}^{r}\frac{h^{2j}u_N^j}{j!2^j}\right)H_{t_j,N,\cE,r}(h)=1+\sum_{s=1}^{w_r}A_{t_j,s,N,\cE}h^s+g_{N,r}(h)
\end{equation}
where  $w_r=5r-2$ and
$g_{N,r}(h)$ is a polynomials whose coefficients are $o(\sig_N^{-r-1})$ in the $L^\infty$ norm. Then with $ \hat k_N=\frac{k-\bbE[S_N]}{\sig_N}$ the contribution is 
the expectation of
\begin{equation}\label{ExForm}
\sig_N^{-1}\bbE(e^{it_j S_N}|\cE)\varphi(\hat k_N)\left(1+\sum_{s=1}^{w_r}A_{t_j,s,N,\cE}(-1)^s H_{s}(\hat k_N)\right).
\end{equation}
\begin{proof}[Proof of Theorem \ref{MainThm}]
The theorem follows from \eqref{H} and \eqref{ExForm} and 
the results in  Section~\ref{Sec4} (showing that the contribution of  nonzero resonant points is negligible when 
$M_N(m)\geq R\ln\sig_N$  with $R$ large enough).
\end{proof} 

\section{Classical Edgeworth expansions.}

\begin{proposition}
The condition $\bbE[e^{it_jS_N}]=o(\sig_N^{-(r-1)})$ is necessary for the usual expansions of order $r$ to hold.
\end{proposition}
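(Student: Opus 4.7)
The plan is to derive $\bbE[e^{it_j S_N}] = o(\sig_N^{-(r-1)})$ by comparing the assumed usual Edgeworth expansion with the full trigonometric expansion of Theorem \ref{MainThm} and isolating the contribution of the nonzero resonant point $t_j = 2\pi a_0/J$ via a test-function argument.

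Subtracting the assumed expansion $\bbP(S_N=k) = \sum_b \sig_N^{-b}\tilde P_{b,N}(k_N)\fg(k_N) + o(\sig_N^{-r})$ from the trigonometric expansion of Theorem \ref{MainThm} gives
\begin{equation*}
\sum_{a=1}^{J-1}\sum_{b=1}^r \sig_N^{-b} P_{a,b,N}(k_N)\fg(k_N)\,e^{2\pi i a k/J} + \Delta_N(k) = o(\sig_N^{-r})
\end{equation*}
uniformly in $k\in\bbZ$, where $\Delta_N(k)=\sum_b \sig_N^{-b}(P_{0,b,N}(k_N)-\tilde P_{b,N}(k_N))\fg(k_N)$ is non-oscillatory. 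Now fix a smooth compactly supported test function $w:\bbR\to\bbR$ with $c_w := \int w\,\fg\,dy \neq 0$ and $\int w\,H_s\,\fg\,dy = 0$ for $s=1,\ldots,w_r$ (finitely many linear constraints, easily realized). Multiplying the identity above by $w((k-a_N)/\sig_N)\,e^{-2\pi i a_0 k/J}$ and summing over $k\in\bbZ$, the right-hand side is $O(\sig_N)\cdot o(\sig_N^{-r}) = o(\sig_N^{-(r-1)})$. On the left, passing from the Riemann sum to the integral (Poisson summation) shows that both $\Delta_N$ paired with the nontrivial character and the $a\neq a_0$ oscillatory modes produce super-polynomially small contributions, since in each case one evaluates the Fourier transform of a smooth compactly supported integrand at frequency of order $\sig_N$. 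Only the $a=a_0$ mode survives, giving
\begin{equation*}
\sum_{b=1}^r \sig_N^{-b}\int w(y)\,P_{a_0, b, N}(y)\fg(y)\,dy = o(\sig_N^{-r}).
\end{equation*}

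From the derivation of Theorem \ref{MainThm}---formula \eqref{ExForm} combined with \eqref{A def}---the leading polynomial $P_{a_0, 1, N}(y)$ has the structure $\bbE[e^{it_j S_N}] + \sum_{s\geq 1}(-1)^s H_s(y)\,B_{s,N}$ where $B_{s,N}=\bbE[A_{t_j,s,N,\cE}\,\bbE(e^{it_j S_N}|\cE)]$. By the Hermite-orthogonality conditions on $w$, the $s\geq 1$ correction integrals vanish, so $\int w\,P_{a_0, 1, N}\fg\,dy = c_w\,\bbE[e^{it_j S_N}]$. Extracting the leading $b=1$ contribution from the displayed test-integral identity (by induction on $r$, or by solving a small linear system in several test functions $w_0,w_1,\ldots$ designed so that different $b$-levels decouple) yields $c_w \sig_N^{-1}\bbE[e^{it_j S_N}] = o(\sig_N^{-r})$, and hence $\bbE[e^{it_j S_N}] = o(\sig_N^{-(r-1)})$.

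The main obstacle is the cascade in the separation of successive $b$-levels: for $r\geq 3$, the $b\geq 2$ contributions $\sig_N^{-b}\int w\,P_{a_0,b,N}\fg\,dy$ are $O(\sig_N^{-b})$ but not automatically $o(\sig_N^{-r})$, so they cannot be discarded trivially. Overcoming this requires either an induction on $r$ (using the $r=1$ base case, which corresponds to the classical local limit theorem) or a Vandermonde-like linear system from several test functions, together with the explicit identification of each $P_{a_0,b,N}$ in terms of conditional expectations via \eqref{ExForm} and \eqref{A def}. The Poisson-summation step is routine given the Schwartz regularity of the integrands but requires careful bookkeeping of the super-polynomial decay constants.
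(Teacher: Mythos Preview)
Your approach is essentially a hands-on reimplementation of what the paper does in two lines by citing \cite[Lemma 5.1]{DH}. The paper observes that if the usual Edgeworth expansion holds, then the full contribution of the resonant point $t_j$---which by \eqref{ExForm} equals $\sig_N^{-1}\varphi(\hat k_N)$ times the expectation over $\cE$ of the Hermite combination $1+\sum_{s=1}^{w_r} A_{t_j,s,N,\cE}(-1)^s H_s(\hat k_N)$---must be $o(\sig_N^{-r})$ uniformly in $k$. The cited lemma (a separation-of-frequencies/degrees statement for products of trigonometric polynomials, polynomials, and the Gaussian) then forces each coefficient $\bbE[\bbE(e^{it_j S_N}|\cE)A_{t_j,s,N,\cE}]$ to be $o(\sig_N^{-(r-1)})$; taking $s=0$ (where $A_{t_j,0,N,\cE}=1$) gives the claim. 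Your test-function/Poisson argument is precisely one way to prove such a separation lemma, so the two routes are equivalent in spirit; yours is more self-contained, the paper's is shorter.

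Your identified ``main obstacle'' is a phantom, and this is worth noting because it simplifies your plan considerably. You do \emph{not} need to separate $b$-levels. The point is that \eqref{ExForm} already gives the Hermite decomposition of the \emph{entire} resonant contribution at $t_j$ (not of $P_{a_0,1,N}$ alone): after taking expectation over $\cE$, the full polynomial $\sum_{b=1}^r \sig_N^{-(b-1)}P_{a_0,b,N}(y)$ equals $\bbE[e^{it_j S_N}]+\sum_{s\geq 1}(-1)^s H_s(y)\,\bbE[\bbE(e^{it_j S_N}|\cE)A_{t_j,s,N,\cE}]$. Hence integrating against your $w\fg$ with $w\perp H_1,\dots,H_{w_r}$ kills all $s\geq 1$ terms \emph{across all $b$-levels at once}, and your displayed identity $\sum_b \sig_N^{-b}\int w P_{a_0,b,N}\fg=o(\sig_N^{-r})$ immediately collapses to $\sig_N^{-1}c_w\,\bbE[e^{it_j S_N}]=o(\sig_N^{-r})$. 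No induction on $r$ and no Vandermonde system is needed. (The confusion arises because the $b$-regrouping in Theorem~\ref{MainThm} is a presentational device, while the Hermite structure lives naturally at the level of \eqref{ExForm}.)
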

\begin{proof}
Let us write 
\begin{equation}\label{Poly}
1+\sum_{s=1}^{w_r}A_{t_j,s,N,\cE}(-1)^sH_{s}(\hat k_N)=\sum_{u=0}^{w_r}B_{t_j,u,N,\cE}\hat k_N^u.
\end{equation}
Using  \eqref{ExForm} and
\cite[Lemma 5.1]{DH} we conclude that the expansions hold iff
$$
\bbE[\bbE(e^{it_j S_N}|\cE)B_{t_j,u,N,\cE}]=o(\sig_N^{-(r-1)})
$$
for all $u$.
However, since $H_{k}$ is of degree $k$ 
we conclude that the expansions hold iff
$$
\bbE[\bbE(e^{it_j S_N}|\cE)A_{t_j,s,N,\cE}]=o(\sig_N^{-(r-1)})
$$
Indeed, the leading coefficient  on the right hand side of \eqref{Poly} is $A_{t_j,w_r,N,\cE}$, which yields that 
$$
\bbE[\bbE(e^{it_j S_N}|\cE)A_{t_j,w_r,N,\cE}]=o(\sig_N^{-(r-1)}).
$$
Now we can proceed by induction  on s, using \cite[Lemma 5.1]{DH}. This means that the contribution is reduced to 
$$
\sig_N^{-1}e^{it_j k}\bbE[\bbE(e^{it_j S_N}|\cE)]=\sig_N^{-1}e^{it_j k}\bbE[e^{it_j S_N}]
$$
and thus 
$\DS
\bbE[e^{it_j S_N}]=o(\sig_N^{-(r-1)}).
$
\end{proof}
\begin{remark}\label{Rem}
The proof shows that if the conditionally stable expansions of order $r$ hold then 
for all $\ell$,
$$
\sup_{j_1,...,j_\ell}\left\|\bbE[e^{it_j S_N}|X_{j_1},...,X_{j_\ell}]\right\|_{L^\infty}=o_\ell(\sig_N^{-(r-1)}). 
$$
Indeed we can just replace the chain by the chain conditioned on $X_{j_1},...,X_{j_\ell}$ and use that the error term in the definition of the conditionally stable expansion depends only on $r,\sig_N$ and the number of conditioned variables $\ell$.
\end{remark}

\begin{proposition}\label{Suff}
The condition
\begin{equation}\label{CondB}
\max_{k\leq 8r-4}\, \sup_{j_1,...,j_{k}\in\cB}\|\mathbb E[e^{it_j S_N}|X_{j_1},...,X_{j_{k}}]\|_{L^1}=o(\sigma_N^{-(r-1)})
\end{equation}
is sufficient for the usual expansions of order $r$ to hold. Similarly, the condition
 that for each $\ell$
$$
\sup_{j_1,...,j_{\ell}}\|\mathbb E[e^{it_j S_N}|X_{j_1},...,X_{j_{\ell}}]\|_{L^1}=o_\ell(\sigma_N^{-(r-1)})
$$
is is sufficient for the conditionally stable expansions of order $r$ to hold. 
\end{proposition}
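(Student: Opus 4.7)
The plan is to resume from the expansion \eqref{ExForm} derived in the proof of Theorem~\ref{MainThm}: for each nonzero resonant point $t_j=2\pi l/m$ with $M_N(m)\le R\ln\sigma_N$, the contribution to $\bbP(S_N=k)$ is, up to $o(\sigma_N^{-r})$, equal to
$$
\sigma_N^{-1}e^{-it_j k}\bbE\!\left[\bbE(e^{it_jS_N}|\cE)\varphi(\hat k_N)\!\left(1+\sum_{s=1}^{w_r}A_{t_j,s,N,\cE}(-1)^sH_s(\hat k_N)\right)\right],\quad w_r=5r-2.
$$
Non-resonant intervals and resonant $t_j$'s with $M_N(m)\ge R\ln\sigma_N$ have already been absorbed into the error by Section~\ref{Sec4}. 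So it suffices to show that for each such $t_j$ and each $0\le s\le w_r$,
$$
\bbE\!\left[\bbE(e^{it_jS_N}|\cE)\,A_{t_j,s,N,\cE}\right]=o(\sigma_N^{-(r-1)}).
$$

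The main step is to open up $A_{t_j,s,N,\cE}$ all the way down to conditional \emph{moments} of $S_N$. From \eqref{A def} and \eqref{H def}, $A_{t_j,s,N,\cE}$ is a polynomial of total degree $\le 5r-2$, with universal numerical coefficients, in $d_N,u_N$ and the rescaled derivatives $\Lambda_{t_j,N,\cE}^{(q)}(0)=\sigma_N^{-q}\Gamma_{t_j,N,\cE}^{(q)}(0)$ for $3\le q\le r+2$. The cumulant--moment relations applied to $\Gamma_{t_j,N,\cE}(z)=\ln\bbE[e^{it_jS_NZ+zS_N}|\cE]$ express each such derivative as a polynomial in the conditional moments $\mathfrak{M}_j=\bbE[S_N^je^{it_jS_NZ}|\cE]/D$, where $D=\bbE[e^{it_jS_NZ}|\cE]$ is bounded away from $0$ uniformly in $\cE$ by Theorem~\ref{RPF}. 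After substitution and using the identity $\bbE(e^{it_jS_N}|\cE)=e^{it_j\sum_nj(Y_n,m|\cE)}D$ to cancel one power of $D$ from the denominator, the expectation we need becomes a finite sum of expressions of the form $\bbE[F\prod_{i=1}^{k}\bbE[P_i(S_N)e^{it_jS_NZ}|\cE]]$ with $k\le k_r=O(r)$, each $P_i$ of degree $\le r+2$, and $F$ an $\cE$-measurable uniformly bounded factor (collecting the surviving bounded powers of $D^{-1}$ and the unit-modulus prefactor $e^{it_j\sum_nj(Y_n,m|\cE)}$).

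The tensorisation step then reduces each such term to the hypothesis~\eqref{CondB}. Introduce $k$ independent copies of the chain on $\{1,\dots,N\}\setminus\cB$ conditional on $\{X_j:j\in\cB\}$, so that the product of $k$ conditional expectations equals a single conditional expectation over the coupled process, coupled only at the indices of $\cB$. Deconditioning and regrouping rewrites each expression as $\bbE[G(X_{j_1},\dots,X_{j_\ell})\,e^{it_jS_N}]$ for some $\ell\le 2k_r\le 8r-4$, $\{j_1,\dots,j_\ell\}\subset\cB$, and $G$ uniformly bounded (the boundedness uses Lemma~\ref{MomEst} on the conditioned chain of Lemma~\ref{CondChain} to control the polynomial factors $P_i(S_N)$). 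Iterated expectation now bounds this by $\|G\|_\infty\,\|\bbE[e^{it_jS_N}|X_{j_1},\dots,X_{j_\ell}]\|_{L^1}=o(\sigma_N^{-(r-1)})$ via \eqref{CondB}, which proves the first assertion. The conditionally stable case is obtained by running the same argument after conditioning the chain on an arbitrary finite family $X_{j_1},\dots,X_{j_\ell}$; by Lemma~\ref{CondChain} the conditioned chain is again uniformly elliptic with the same constants, so the resulting error depends only on $\ell,r$ and $\sigma_N$ as required.

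The main obstacle is the bookkeeping in the middle paragraph: the cumulant--moment expansion of $A_{t_j,s,N,\cE}$ produces rational expressions in $D$ whose denominators must be uniformly absorbed using Theorem~\ref{RPF}, and the tensorisation must be arranged so that the number of coupling indices stays bounded by $8r-4$ uniformly across all monomials, all $s\le w_r$, and all resonant points $t_j$. Getting the combinatorial constant sharp (the precise value $8r-4$) will require a careful tracking of how many factors of $S_N$ appear in the worst monomial versus how many of them can be consolidated via the same conditional expectation before tensorisation.
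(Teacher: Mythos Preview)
Your approach has a genuine gap at the tensorisation step, and it stems from working with the \emph{global} cumulant--moment expansion rather than the block decomposition that the paper uses.

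First, the claim that $D=\bbE[e^{it_jS_NZ}\mid\cE]$ is uniformly bounded away from $0$ is not justified by Theorem~\ref{RPF}. Heuristically $|D|\approx\exp\bigl(-c\sum_{n\notin\cB}q_n(m|\cE)\bigr)$, and since $\sum_n q_n(m|\cE)$ can be of order $\ln\sigma_N$, $|D|$ may be as small as $\sigma_N^{-c}$. Thus the factor $F$, which contains $D^{-(k-1)}$, is \emph{not} uniformly bounded. More fundamentally, after your tensorisation you want to write the expression as $\bbE[G(X_{j_1},\dots,X_{j_\ell})\,e^{it_jS_N}]$ with $\ell\le 8r-4$, but $F=e^{i\theta}D^{-(k-1)}$ and the integrated-out copies are all functions of the \emph{entire} $\sigma$-algebra $\cE=\sigma(X_n:n\in\cB)$, which has $|\cB|=N_0=O(\ln\sigma_N)$ generators, not $O(r)$. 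Likewise $P_1(S_N)$ depends on the whole chain, so it cannot be absorbed into a function of finitely many $X_{j_i}$'s. Hence the hypothesis~\eqref{CondB} is not applicable to the resulting expression.

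The paper avoids all this by exploiting the \emph{additive} structure of $\Gamma_{t_j,N,\cE}$: conditional independence of the blocks $(n_k,n_{k+1})$ gives $\Gamma_{t_j,N,\cE}(z)=\sum_{k}\Gamma_{t_j,n_k,n_{k+1},\cE}(z)$, so each derivative $\Lambda^{(q)}(0)$ (and similarly $d_N,u_N$) is a \emph{sum} of block-local quantities $G_{k,s}$ that depend only on the pair $(X_{n_k},X_{n_{k+1}})$. Expanding the polynomial $A_{t_j,s,N,\cE}$ then yields a sum over $\ell_r\le 4r-2$ block indices, so each monomial depends on at most $2\ell_r=8r-4$ variables $X_{n_j}$. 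The remaining work is the summability estimate $\sum_{k_1,\dots,k_{\ell_r}}\|G_{k_1,\dots,k_{\ell_r},N}\|_{L^\infty}\le C$, which uses the individual bounds on $\|G_{k,s}\|_{L^\infty}$ from Lemma~\ref{Pi'' lemma} together with $N_0=O(\ln\sigma_N)$. This is the step you are missing; without the block decomposition there is no mechanism to reduce from $|\cB|$ conditioning variables to a bounded number.
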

\begin{proof}
Since the variables $X_j$ coming from different blocks $(n_k,n_{k+1})$ are conditionally independent (under $\cE$) 
we have\footnote{Recall that $\Gamma$ is defined by \eqref{Gamm def}.} 
$$
\Gamma_{t_j,N,\cE}(z)=\sum_{k=0}^{N_0}\Gamma_{t_j,n_k, n_{k+1},\cE}(z).
$$
Thus recalling \eqref{DefLambda} we have
\begin{equation}\label{Lam rep}
(i)^{-s}\sig_N^s\Lambda_{t_j,N,\cE}^{(s)}(0)=\sum_{k=0}^{N_0}\Gamma_{t_j,n_k, n_{k+1},\cE}^{(s)}(0).
\end{equation}
For $s\geq 3$, let
$$
G_{k,s}=G_{t_j,k,s,\cE}=(i)^s\Gamma_{t_j,n_k, n_{k+1},\cE}^{(s)}(0).
$$
Then $G_{k,s}$ are functions of $(X_{n_k},X_{n_{k+1}})$.
Arguing similarly to the proof of Lemma \ref{Pi'' lemma}(iii) we get 
$$
\|G_{k,s}\|_{L^\infty}\leq C\sig_{n_k,n_{k+1}}^2, \,s\geq 3.
$$
For $j=1,2$ we need to estimate $d_N$ and $u_N$ and not only the cumulants. To estimate $d_N$, note that 
\begin{equation}\label{d rep}
\sig_N d_N=i\sum_k(\Gamma_{t_j,n_k, n_{k+1},\cE}'(0)-i\bbE[S_{n_k, n_{k+1}}]):=\sum_{k=0}^{N_0} G_{k,1}.
\end{equation}
Notice   also that $G_{k,1}$ depend only on $(X_{n_k},X_{n_{k+1}})$. 
Arguing as in the proof of Lemma~\ref{Pi'' lemma} we see that, for every $p\in(1,2)$
\begin{equation}\label{Gk1}
\|G_{k,1}\|_{L^\infty}=O\left((\sig_{n_k,n_{k+1}})^{\frac{2p}{p+2}}(\ln\sig_{n_k,n_{k+1}})^{\frac{2}{p+2}}\right)+O(1)
\end{equation}
where the $O(1)$ term is only needed when $\sig_{n_k,n_{k+1}}=\sqrt{\text{Var}(S_{n_k,n_{k+1}})}$ is small.

To estimate $u_N$, by applying Lemma \ref{L1} we see that 
$$
-\sig_N^2u_N=\Gamma_{N}''(0)-\sig_N^2=\Gamma_{t_j,N,\cE}''(0)-V(S_N|\cE)+O(\ln^2\sig_N)
$$
in $L^\infty$. Observe now that 
$$
V(S_N|\cE)=\sum_k V(S_{n_k,n_{k+1}}|\cE)
$$
because the blocks between two bad times are conditionally independent. Thus, 
\begin{equation}\label{u rep}
-\sig_N^2u_N=\Gamma_{N}''(0)-\sig_N^2=\sum_k(\Gamma_{t_j,n_k,n_{k+1},\cE}''(0)-V(S_{n_k,n_{k+1}}|\cE)) +O(\ln^2\sig_N).
\end{equation}
Let 
\begin{equation}\label{Gk2}
G_{2,k}=\Gamma_{t_j,n_k,n_{k+1},\cE}''(0)-V(S_{n_k,n_{k+1}}).
\end{equation}
Then arguing\footnote{We first approximate the conditional variance by $\sig_{n_k,n_{k+1}}^2$ as in Lemma \ref{L1}, and then approximate the second derivative as in Lemma \ref{Pi'' lemma}.} as in the proof of Lemmata \ref{L1} and \ref{Pi'' lemma} we see that 
for every $p\in(1,2)$ we have
$$
\|G_{2,k}\|_{L^\infty}=O(1)+O\big((\sig_{n_k,n_{k+1}})^{\frac{2p}{p+1}}(\ln \sig_{n_k,n_{k+1}})^{\frac1{p+1}}\big)
$$
where the $O(1)$ term is needed to cover the case when $\sig_{n_k,n_{k+1}}$ is small. 

Next, by using the explicit formula \eqref{ExForm} of the generalized Edgeworth polynomials and the above formulas we see that their coefficients 
 are linear combinations of expressions of the form
\begin{equation}\label{Form}
A_N\bbE[e^{it_jS_N}]+\sum_{1\leq k_1,...,k_{\ell_r}\leq N_0}c_{k_1,...,k_{\ell_r}}\bbE\left[\bbE[e^{it_jS_N}|\bar X_{k_{1}},...,\bar X_{k_{\ell_r}}]G_{k_1,...k_\ell,N}\right]
\end{equation}
where $\ell_r=4r-2$, with $\bar X_k=(X_{n_k},X_{n_{k+1}})$, and $A_N$ is 
either\footnote{$A_N$ is $1$ only in the coefficients of the polynomial multiplied by $\sig_N^{-1}$, but for the proof to work we actually only need $A_N$ to be bounded.} $0$ or $1$, $c_{k_1,...,k_{3r}}$ are combinatorial coefficients bounded by some constant $C=C_r$, 
and 
\begin{equation}
\label{G-Prod}
G_{k_1,...,k_{\ell_r},N}=\prod_{s=1}^{3r-2}\sig_N^{-j_{k_s}}G_{j_{k_s},k_s}
\end{equation}
for appropriate $0\leq j_{k_s}\leq m_r$ (for some $m_r$ which depend only on $r$), where for $j=0$ we set $G_{0,k}=0$. 
Before we proceed with the proof let us give more detailed explanation 
 of \eqref{G-Prod}.
First, the coefficient of the polynomials defined on the right hand side of \eqref{Poly} are linear combinations of $A_{t_j,s,N,\cE}, s\leq w_r$. Next, by \eqref{A def} and \eqref{H def} each  $A_{t_j,s,N,\cE}$ has the form 
$$
A_{t_j,s,N,\cE}=\mathcal P_s\left(d_N,u_N, \Lambda_{t_j,N,\cE}^{(3)}(0),...,\Lambda_{t_j,N,\cE}^{(v_s)}(0)\right)
$$
for some polynomial $\mathcal P_s$ whose degree does not exceed 
$4s-2$, where $v_s$ is some positive integer. Indeed, the term of the smallest order in  the brackets on the left hand side of \eqref{A def} is $d_N$, 
and the largest relevant power of $d_N$ is $3s-2$. On the other hand, the term $H_{t_j,N,\cE,r}(t)$ contributes at most $s$ variables among $\Lambda_{t_j,N,0}^{(u)}$ to $A_{t_j,s,N,\cE}$, where there is an actual contribution only if $u\leq s+2$ since in the computation of $A_{t_j,s,N,\cE}$ we need only to take into account the partial term $H_{t_j,N,\cE,s}(t)$. 

Overall we get at most $4s-2$ appearances of variables of the form $(X_{n_k},X_{n_{k+1}})$ which amounts in at most $2(4r-2)=8r-4$ appearances of variables of the form $X_{n_j}$, which is the maximal number of conditioned variables in \eqref{CondB}.
 Now we arrive 
at \eqref{G-Prod} by
taking expectation of the expression in  \eqref{ExForm}, 
 using \eqref{Lam rep} 
and \eqref{d rep},
and the fact
that for every function $Q=Q(X_{m_1},...,X_{m_s})$ with $m_\ell\in\cB$ we have 
$$
\bbE\left[\bbE(e^{it_j S_N}|\cE)Q\right]=\bbE\left[Q\cdot\bbE\left[\bbE(e^{it_j S_N}|\cE)|X_{m_1},...,X_{m_s}\right]\right]=\bbE\left[Q\cdot\bbE(e^{it_j S_N}|X_{m_1},...,X_{m_s})\right].
$$

 To prove that the contribution coming from the nonzero resonant point is negligible it is enough to show that the above coefficients are $o(\sig_N^{-(r-1)})$. 

We claim  that 
\begin{equation}\label{G}
\|G\|:=\sum_{k_1,...,k_{\ell_r}}\|G_{k_1,...k_{\ell_r},N}\|_{L^\infty}\leq C
\end{equation}
for some $C$ which depends only on $r$. Note that  \eqref{G} implies that
$$
\left|
\sum_{k_1,...,k_{\ell_r}}\bbE\left[\bbE[e^{it_jS_N}|\bar X_{k_{1}},...,\bar X_{k_{\ell}}]G_{k_1,...,k_{\ell_r},N}\right]
\right|\leq C\sup_{a_1,...,a_{2\ell_r}\in \cB}\left\|
\bbE[e^{it_j S_N}|X_{a_1},\dots, X_{a_{2\ell_r}}]\right\|_{L^1}
$$
and so the first condition is indeed sufficient.

In order to prove \eqref{G}, let us first consider the case
 where one of $j_{k_s}=j$ is larger than $2$. In this case we have 
$$
\sig_N^{-j}G_{j_{k_s},k_s}=\sig_N^{-j}
 \left[O(\sig_{n_{k},n_{k+1}}^2)+O(1)\right],\, k=k_s
$$
while the other terms are bounded. Thus the contribution to  $\|G\|$ of such terms  is $O(N_0^{\ell_r})\sig_N^{-1}=o(1)$.
Otherwise, $j_{k_s}$ is either $1$ or $2$. If one of them is $1$, then, since the other terms in the product are bounded we see that the contribution to $\|G\|$ of such terms is dominated by
$$
N_0^{\ell_r}\sum_{k=0}^{N_0}\sig_N^{-1}\|G_{1,k}\|_{L^\infty}.
$$
However,  by \eqref{Gk1}, if $p$ is close enough to $1$ then
$\DS
\|G_{k,1}\|_{L^\infty}=O\left(\sig_N^{3/4}\right)
$
and so 
$$
N_0^{k_\ell}\sum_{k=0}^{N_0}\sig_N^{-1}\|G_{1,k}\|_{L^\infty}=
O\left(N_0^{k_\ell+1}\sig_N^{-1/4}\right)=o(1)
$$
where we have used that $N_0=O(\ln\sig_N)$.

It remains to consider $(k_1,...,k_{\ell_r})$ so that $j_{k_s}=2$ for all $s$. In this case the contribution to $\|G\|$ from such terms is at most
$$
N_0^{k_{\ell_r}}\sum_{k=0}^{N_0}\sig_N^{-2}\|G_{2,k}\|_{L^\infty}.
$$
Note that by \eqref{Gk2}, if $p$ is close enough to $1$ then
$$
\|G_{2,k}\|_{L^\infty}=O\big(\ln\sig_N (\sig_N)^{3/2}\big)
$$
where we have  again
used that $\sig_{n_k,n_{k+1}}=O(\sig_N)$. Hence 
$$
N_0^{\ell_r}\sum_{k}\sig_N^{-2}\|G_{2,k}\|_{L^\infty}\leq C\sig_N^{-1/2}N_0^{\ell_r+1}\ln\sig_N=o(1).
$$
The proof that the second condition is sufficient for the stable expansions is similar, we first condition on a finite number of variables, and then repeat the  arguments above.
\end{proof}

\begin{proof}[Proof of Theorem \ref{EdgStable}]
The theorem follows now by 
Remark \ref{Rem} and Proposition \ref{Suff} 
(note that the case when $M_N(m)\geq  R\ln\sig_N$ was already treated in \S \ref{Sec4}).
\end{proof}

\begin{proof}[Proof of Theorem \ref{Thm Stable Cond}]
The theorem follows from Theorem \ref{EdgStable} together with 
the standard 
 fact that
 a sequence of  probability measures on $\{\mu_N\}$ on $\bbZ/m\bbZ$ 
 satisfies $\mu_N(a)=\frac{1}{m}+O(\gamma_N)$ for some sequence
 $\gamma_N$ and all $a\in \bbZ/m\bbZ$ 
 iff  $\hat\mu_N(b)=O(\gamma_N)$ for all $b\in \left(\bbZ/m\bbZ\right)\setminus \{0\}$
where $\hat\mu$ is the Fourier transform of $\mu$
(see e.g. 
\cite[Lemma 6.2]{DH}). 
\end{proof}

\end{document}